\documentclass{amsart}
\usepackage{amssymb}
\usepackage{braket}
\usepackage{mathrsfs}
\usepackage[all]{xy}
\usepackage{graphicx}
\usepackage{stmaryrd}



\newtheorem{thm}{Theorem}[section]
\newtheorem{cor}[thm]{Corollary}
\newtheorem{prop}[thm]{Proposition}
\theoremstyle{definition}
\newtheorem{dfn}[thm]{Definition}
\newtheorem{ex}[thm]{Example}
\newtheorem{claim}[thm]{Claim}
\newtheorem{lem}[thm]{Lemma}

\theoremstyle{remark}
\newtheorem{rem}[thm]{Remark}
\newtheorem{introdfn}{Definition}


\newcommand{\Ob}{\mathrm{Ob}}         
\newcommand{\id}{\mathrm{id}}         
\newcommand{\Id}{\mathrm{Id}}         
\newcommand{\ppr}{^{\prime}}          
\newcommand{\pprr}{^{\prime\prime}}   
\newcommand{\sh}{\sharp}              
\newcommand{\fa}{\forall}             
\newcommand{\am}{\amalg}              
\newcommand{\co}{\colon}              
\newcommand{\ci}{\circ}               
\newcommand{\iv}{^{-1}}               
\newcommand{\uas}{^{\ast}}            
\newcommand{\sas}{_{\ast}}            
\newcommand{\bs}{\backslash}          

\newcommand{\Mon}{\mathit{Mon}}     
\newcommand{\Sett}{\mathit{Set}}    
\newcommand{\Ab}{\mathit{Ab}}       
\newcommand{\Add}{\mathit{Add}}     
\newcommand{\Sadd}{\mathit{Sadd}}   
\newcommand{\Mod}{\mathit{Mod}}     
\newcommand{\RMod}{R\Mod}           
\newcommand{\Fun}{\mathit{Fun}}     

\newcommand{\lla}{\longleftarrow}     
\newcommand{\lra}{\longrightarrow}    
\newcommand{\tc}{\Rightarrow}         
\newcommand{\LR}{\Leftrightarrow}     
\newcommand{\thra}{\twoheadrightarrow}

\newcommand{\al}{\alpha}         
\newcommand{\be}{\beta}          
\newcommand{\lam}{\lambda}       
\newcommand{\kp}{\kappa}         
\newcommand{\ups}{\upsilon}      
\newcommand{\sig}{\sigma}        
\newcommand{\ep}{\varepsilon}    
\newcommand{\thh}{\theta}        
\newcommand{\vp}{\varphi}        
\newcommand{\Th}{\Theta}         

\newcommand{\Lam}{\Lambda}       

\newcommand{\Csc}{\mathscr{C}}  
%
\newcommand{\Cbb}{\mathbb{C}}   
\newcommand{\Fbb}{\mathbb{F}}   
\newcommand{\Rbb}{\mathbb{R}}   
\newcommand{\Sbb}{\mathbb{S}}   
\newcommand{\Zbb}{\mathbb{Z}}   
%
%
\newcommand{\rbf}{\mathbf{r}}   
\newcommand{\tbf}{\mathbf{t}}   
%
\newcommand{\Bcal}{\mathcal{B}} 
\newcommand{\Gcal}{\mathcal{G}} 
%
\newcommand{\Afr}{\mathfrak{A}} 
\newcommand{\afr}{\mathfrak{a}} 
\newcommand{\bfr}{\mathfrak{b}} 
\newcommand{\cfr}{\mathfrak{c}} 

\newcommand{\wt}{\widetilde}    
\newcommand{\und}{\underline}   
\newcommand{\ovl}{\overline}    
\newcommand{\ov}{\overset}      
\newcommand{\un}{\underset}     



\newcommand{\SMack}{\mathit{SMack}}  
\newcommand{\Mack}{\mathit{Mack}}    

\newcommand{\HUG}{{}_HU_G}           
\newcommand{\HUK}{{}_HU_K}           
\newcommand{\LVH}{{}_LV_H}           
\newcommand{\VU}{V\un{H}{\times}U}   
\newcommand{\HHG}{{}_HH_G}           
\newcommand{\GHH}{{}_GH_H}           
\newcommand{\Inddef}{\mathrm{Inddef}}
\newcommand{\Infres}{\mathrm{Infres}}


\newcommand{\pt}{\mathbf{1}}                     
\newcommand{\dfl}{\mathrm{dfl}}                  
\newcommand{\sika}{\boxplus}                     
\newcommand{\maru}{\oplus}                       
\newcommand{\Obig}{\Omega_{\mathrm{big}}}        
\newcommand{\msp}{\mathrm{sp}}                   

\newcommand{\FG}{\mathrm{FinGrp}}             
\newcommand{\sFG}{\und{\FG}}                  
\newcommand{\SMackS}{\mathit{SMack}(\Sbb)}    
\newcommand{\MackS}{\mathit{Mack}(\Sbb)}      
\newcommand{\MackSR}{\mathit{Mack}^R(\Sbb)}   
\newcommand{\MackdSR}{\Mack_{\dfl}^R(\Sbb)}   
\newcommand{\AddC}{\Add(\Csc)}                
\newcommand{\AddCR}{\Add^R(\Csc)}             
\newcommand{\SaddC}{\Sadd(\Csc)}              
\newcommand{\ResC}{\mathit{Res}(\Csc)}        
\newcommand{\ResCR}{\mathit{Res}^R(\Csc)}     
%
\newcommand{\BisetFtr}{\mathit{BisetFtr}}    
%
\newcommand{\Sxg}{\Sbb/\!_{\xg}}             
\newcommand{\ESxg}{E\text{-}\Sxg}            
\newcommand{\FFA}{\Fun(\sFG,\RMod)}            


\newcommand{\xg}{\frac{X}{G}}
\newcommand{\xgp}{\frac{X\ppr}{G\ppr}}

\newcommand{\yg}{\frac{Y}{G}}
\newcommand{\yh}{\frac{Y}{H}}
\newcommand{\yhp}{\frac{Y\ppr}{H\ppr}}
\newcommand{\zk}{\frac{Z}{K}}
\newcommand{\ak}{\frac{A}{K}}
\newcommand{\akp}{\frac{A\ppr}{K\ppr}}

\newcommand{\bl}{\frac{B}{L}}
\newcommand{\blp}{\frac{B\ppr}{L\ppr}}

\newcommand{\ch}{\frac{C}{H}}
\newcommand{\wl}{\frac{W}{L}}

\newcommand{\ptg}{\frac{\pt}{G}}
\newcommand{\pth}{\frac{\pt}{H}}
\newcommand{\ptf}{\frac{\pt}{f}}

\newcommand{\althh}{\frac{\alpha}{\theta}}
\newcommand{\althhp}{\frac{\alpha\ppr}{\theta\ppr}}
\newcommand{\bet}{\frac{\beta}{\tau}}
\newcommand{\betp}{\frac{\beta\ppr}{\tau\ppr}}

\newcommand{\akaxg}{\ak\ov{\afr}{\to}\xg}
\newcommand{\akaxgp}{\akp\ov{\afr\ppr}{\to}\xg}
\newcommand{\blbxg}{\bl\ov{\bfr}{\to}\xg}
\newcommand{\blbyh}{\bl\ov{\bfr}{\to}\yh}

\newcommand{\chcxg}{\ch\ov{\cfr}{\to}\xg}

\newcommand{\xgixg}{\xg\ov{\id}{\to}\xg}

\newcommand{\yhiyh}{\yh\ov{\id}{\to}\yh}
\newcommand{\akiak}{\ak\ov{\id}{\to}\ak}

\newcommand{\kfgk}{K\ov{f}{\to}G,\kp}
\newcommand{\fk}{f,\kp}

\newcommand{\Pm}{P_{\maru}}

\numberwithin{equation}{section}

\begin{document}

\title[Several adjoint constructions for biset functors]{Several adjoint constructions for biset functors via Mackey-functorial interpretation.}

\author{Hiroyuki NAKAOKA}
\address{Department of Mathematics and Computer Science, Kagoshima University, 1-21-35 Korimoto, Kagoshima, 890-0065 Japan\ /\ LAMFA, Universit\'{e} de Picardie-Jules Verne, 33 rue St Leu, 80039 Amiens Cedex1, France}

\email{nakaoka@sci.kagoshima-u.ac.jp}
\urladdr{http://www.lamfa.u-picardie.fr/nakaoka/}

\thanks{The author wishes to thank Professor Laurence John Barker for stimulating arguments}
\thanks{The author wishes to thank Professor Serge Bouc for his interest and comments}

\thanks{This work is supported by JSPS Grant-in-Aid for Young Scientists (B) 25800022, JSPS Grant-in-Aid for Scientific Research (C) 24540085}

\begin{abstract}
We consider analogs of Jacobson's $F$-Burnside construction and Boltje's $(-)_+$-construction for biset functors, using Mackey-functor theoretic interpretation of biset functors.
\end{abstract}

\maketitle

\tableofcontents


\section{Introduction and Preliminaries}

In the previous article \cite{N_BisetMackey}, we constructed a 2-category $\Sbb$ of finite sets with variable finite group actions. Using 2-coproducts and 2-fibered products in $\Sbb$, we can define the notion of a Mackey functor on the classifying category $\Csc$ of $\Sbb$. In \cite{N_BisetMackey}, it has been shown that biset functors can be regarded as special class of these Mackey functors, and characterized among them by the condition which we call '{\it deflativity}'.

It can be expected that, several functorial constructions for ordinary Mackey functors will be also performed analogously on this category $\Csc$, and provide some constructions which will serve to developments in biset functor theory.

This article is devoted to a demonstration of this machinery, through producing analogs of Jacobson's $F$-Burnside construction and Boltje's $(-)_+$-construction for biset functors.
The following diagram indicates the relations of the categories and functors introduced in this article. Each curved arrow is left adjoint to the functor in the opposite direction. 
\[
\xy
(-30,20)*+{\SaddC}="0";
(-30,-2)*+{\AddCR}="2";
(-30,-12)*+{\ResCR}="4";
(8,42)*+{\SMackS}="6";
(8,20)*+{\MackSR}="8";
(30,20)*+{\MackdSR}="10";
(30,-12)*+{\BisetFtr^R}="12";
{\ar_{\text{forgetful}} "2";"0"};
{\ar^{M\uas\mapsfrom (M\uas,M\sas)} "8";"0"};
{\ar^{} "0";"6"};
{\ar^{\simeq} "2";"4"};
{\ar^{\rbf^{\sh}} "12";"4"};
{\ar@{^(->} "8";"6"};
{\ar@{^(->} "10";"8"};
{\ar^{\simeq} "10";"12"};
{\ar_{} "8";"2"};
{\ar@/_1.20pc/_{R [-]\ci-} "0";"2"};
{\ar@/^1.0pc/^(0.6){\Obig^R [-]} "0";"8"};
{\ar@/^1.60pc/^{\Afr [-]} "0";"6"};
{\ar@/^1.20pc/^{(-)^R} "6";"8"};
{\ar@/_1.60pc/_{(-)_+} "2";"8"};
{\ar@/_1.20pc/_{\wt{(-)}} "8";"10"};
{\ar@/_2.0pc/_{(-)_{\maru}} "4";"12"};
\endxy
\]
In this diagram, $\MackSR$ (resp. $\SMackS$) denotes the category of $R$-linear (resp. semi-)Mackey functors on $\Sbb$, where $R$ is a commutative coefficient ring. The full subcategory of deflative Mackey functors is denoted by $\MackdSR$. The category of $R$-linear biset functors is denoted by $\BisetFtr^R$. In \cite{N_BisetMackey}, the equivalence $\MackdSR\ov{\simeq}{\lra}\BisetFtr^R$ has been constructed. The definitions of these categories are reviewed in section 2.

The other categories and functors in the above diagram are defined in the succeeding sections. We summarize the results in each section here. Throughout, we use the following notation.
\begin{itemize}
\item[-] $\Sett$ denotes the category of sets and maps.
\item[-] $\FG$ denotes the category of finite groups and group homomorphisms.
\item[-] $\RMod$ denotes the category of $R$-modules and $R$-homomorphisms, for a fixed coefficient ring $R$.
\end{itemize}

In section 3, we 
construct a functor
\[ \wt{(-)}\co\MackSR\to \MackdSR, \]
which is the left adjoint of the inclusion $\MackdSR\hookrightarrow\MackSR$. 

In section 4, we consider an analog of Jacobson's $F$-Burnside construction (\cite{Jacobson}) for $\MackSR$. From the category $\SaddC$ of contravariant functors $E\co \Csc\to\Sett$ sending finite products to coproducts, we construct functors
\[ \Afr [-]\co\SaddC\to \SMackS\quad\text{and}\quad \Obig^R[-]\co\SaddC\to\MackSR, \]
which are left adjoint to the forgetful functors 
$\SMackS\to\SaddC$ and $\MackSR\to\SaddC$.

In section 5, we consider an analog of Boltje's $(-)_+$-construction (\cite{Boltje}) for $\MackSR$. From the category $\AddCR$ of contravariant functors $F\co \Csc\to\RMod$ sending finite products to coproducts, we construct a functor
\[ (-)_+\co\AddCR\to \MackSR, \]
which is left adjoint to the forgetful functor 
$\MackSR\to\AddCR$.

In section 6, we show the equivalence $\AddCR\ov{\simeq}{\lra}\ResCR$. 
Here, $\ResCR$ denotes the category of functors $P\co\FG\to\RMod$ satisfying $P(\sig_g)=\id_{P(G)}$ for any finite group $G$ and $g\in G$, where 
$\sigma_g\colon G\to G$ 
is the conjugation map.
Composing with the functors constructed in section 3 and 5, we obtain a functor
\[ (-)_{\maru}\co\ResCR\to\BisetFtr^R, \]
which is left adjoint to a natural functor $\rbf^{\sh}\co\BisetFtr^R\to\ResCR$.

In section 7, for the convenience of the users of biset functor theory, we introduce the direct construction of the functor $(-)_{\maru}$. This construction involves essentially left Kan extension.

\medskip

Throughout this article, any group is assumed to be finite. 
The unit of a group will be denoted by $e$. 
A one-point set is denoted by $\pt$, on which any finite group $G$ acts 
trivially. 
A biset is always assumed to be finite.
A monoid is always assumed to be unitary and commutative. Similarly a ring is assumed to be commutative, with an additive unit $0$ and a multiplicative unit $1$. A monoid homomorphism preserves units.
We denote the category of monoids by $\Mon$.

For any category $\mathscr{K}$ and any pair of objects $X$ and $Y$ in $\mathscr{K}$, the set of morphisms from $X$ to $Y$ in $\mathscr{K}$ is denoted by $\mathscr{K}(X,Y)$. 
Any 2-category is assumed to be strict (\cite{Borceux},\cite{MacLane}). For a 2-category $\Cbb$, the entity of 0-cells (respectively 1-cells, 2-cells) is denoted by $\Cbb^0$ (resp. $\Cbb^1$, $\Cbb^2$). For a pair of 0-cells $X, Y$ in $\Cbb$, the set of 1-cells from $X$ to $Y$ is denoted by $\Cbb^1(X,Y)$.

\section{Review of the definitions}

We review the definitions and results from \cite{N_BisetMackey}. Details can be found in \cite{N_BisetMackey}.

\smallskip

The 2-category of finite sets with variable group actions is defined as follows.
\begin{dfn}\label{DefS}
2-category $\Sbb$ is defined as follows.
\begin{enumerate}
\item[{\rm (0)}] A 0-cell is a pair of a finite group $G$ and a finite $G$-set $X$. We denote this pair by $\xg$.
\item[{\rm (1)}] For any pair of 0-cells $\xg$ and $\yh$, a morphism $\althh\co \xg\to\yh$ is a pair of a map $\al\co X\to Y$ and a family of maps $\{\thh_x\co G\to H \}_{x\in X}$ satisfying
\begin{itemize}
\item[{\rm (i)}] $\al(gx)=\thh_x(g)\al(x)$ 
\item[{\rm (ii)}] $\thh_x(gg\ppr)=\thh_{g\ppr x}(g)\thh_x(g\ppr)$
\end{itemize}
for any $x\in X$ and any $g,g\ppr\in G$.
\item[{\rm (2)}] For any pair of 1-cells $\althh,\althhp\co\xg\to\yh$, a 2-cell $\ep\co\althh\tc\althhp$ is a family of elements $\{ \ep_x\in H\}_{x\in X}$ satisfying
\begin{itemize}
\item[{\rm (i)}] $\al\ppr(x)=\ep_x\al(x) $,
\item[{\rm (ii)}] $\ep_{gx}\thh_x(g)\ep_x\iv=\thh\ppr_x(g)$
\end{itemize}
for any $x\in X$ and $g\in G$.
\end{enumerate}
A 1-cell $\althh$ is often abbreviately written as $\al$. Remark that a 1-cell $\al\co \xg\to\yh$ preserves orbits. Namely, for any $x\in X$ we have $\al(Gx)\subseteq H\al(x)$.
\end{dfn}
Horizontal composition is denoted by \lq\lq$\ci$", while \lq\lq$\cdot$" denotes vertical composition. For example, for any diagram
\[
\xy
(-28,0)*+{\xg}="0";
(0,0)*+{\yh}="2";
(28,0)*+{\zk}="4";
{\ar@/^1.2pc/^{\althh} "0";"2"};
{\ar@/_1.2pc/_{\althhp} "0";"2"};
{\ar@/^1.2pc/^{\bet} "2";"4"};
{\ar@/_1.2pc/_{\betp} "2";"4"};
{\ar@{=>}^{\ep} (-14,2);(-14,-2)};
{\ar@{=>}^{\delta} (14,2);(14,-2)};
\endxy
\]
in $\Sbb$, we have an equality
\[ (\delta\ci\al\ppr)\cdot(\be\ci\ep)=(\be\ppr\ci\ep)\cdot(\delta\ci\al). \]

\begin{rem}
$\ $
\begin{enumerate}
\item For a fixed finite group $G$, a $G$-map $\al\co X\to Y$ induces a 1-cell $\althh\co\xg\to\yg$, with $\thh=\{\id_G\co G\to G\}_{x\in X}$. we denote this 1-cell by $\frac{\al}{G}$.
\item Any homomorphism of finite groups $f\co G\to H$ induces a 1-cell $\frac{\pt}{f}\co\ptg\to\pth$.
\end{enumerate}
\end{rem}

Category $\Csc$ is defined to be the classifying category of $\Sbb$, as follows.
\begin{dfn}\label{DefC}
Category $\Csc$ is defined as follows.
\begin{itemize}
\item[{\rm (i)}] $\Ob(\Csc)=\Sbb^0$.
\item[{\rm (ii)}] For any pair of objects $\xg,\yh\in\Ob(\Csc)$, we define an equivalence relation on $\Sbb^1(\xg,\yh)$ as follows.
\begin{itemize}
\item[-] 1-cells $\althh,\althhp\in\Sbb^1(\xg,\yh)$ are equivalent if there exists some 2-cell $\ep\co \althh\tc\althhp$.
\end{itemize}
The set of morphisms $\Csc(\xg,\yh)$ is defined to be the quotient of $\Sbb^1(\xg,\yh)$ by this equivalence:
\[ \Csc(\xg,\yh)=\Sbb^1(\xg,\yh)\Big/\text{2-cells} \]
The equivalence class of $\althh$ is denoted by $\und{\big(\althh\big)}$, or simply by $\und{\al}$.
\end{itemize}
\end{dfn}

A 1-cell $\al\co\xg\to\yh$ is said to be an {\it adjoint equivalence} if there is a 1-cell $\be\co\yh\to\xg$ and 2-cells $\rho\co \be\ci\al\tc\id$, $\lam\co \al\ci\be\tc\id$, which satisfy
\[ \al\ci\rho=\lam\ci\al,\ \ \rho\ci\be=\be\ci\lam. \]
This $\be$ is called a {\it quasi-inverse} of $\al$, and denoted by $\be=\al\iv$.
Remark that if $\al$ is an adjoint equivalence in $\Sbb$, then $\und{\al}$ becomes an isomorphism in $\Csc$. 

\begin{ex}
$\ \ $
\begin{enumerate}
\item If $\al\co X\to Y$ is an isomorphism of finite $G$-sets, then $\frac{\al}{G}\co\xg\ov{\simeq}{\lra}\yg$ is an adjoint equivalence in $\Sbb$.
\item If $f\co G\to H$ is an isomorphism of finite groups, then $\frac{\pt}{f}\co\ptg\ov{\simeq}{\lra}\pth$ is an adjoint equivalence in $\Sbb$.
\end{enumerate}
\end{ex}

\bigskip

The following results have been shown in \cite{N_BisetMackey}.

\begin{dfn}
For any $\xg$ and $\yh$ in $\Sbb^0$, their {\it 2-coproduct} $(\xg\am \yh,\ups_{\xg},\ups_{\yh})$ is defined to be a triplet of $\xg\am \yh\in\Sbb^0$ and $\ups_{\xg}\in\Sbb^1(\xg, \xg\am \yh)$, $\ups_{\yh}\in\Sbb^1(\yh,\xg\am \yh)$, satisfying the following conditions.
\begin{itemize}
\item[{\rm (i)}]
For any $\wl\in\Sbb^0$ and $\al\in\Sbb^1(\xg,\wl),\,\be\in\Sbb^1(\yh,\wl)$, there exist $\al\cup\be\in\Sbb^1(\xg\am \yh,\wl)$ and 2-cells $\xi,\eta$ as in the following diagram.
\[
\xy
(0,-6)*+{\wl}="0";
(-18,10)*+{\xg}="2";
(0,10)*+{\xg\am \yh}="4";
(18,10)*+{\yh}="6";
{\ar_{\al} "2";"0"};
{\ar|*+{_{\al\cup\be}} "4";"0"};
{\ar^{\be} "6";"0"};
{\ar^(0.4){\ups_{\xg}} "2";"4"};
{\ar_(0.4){\ups_{\yh}} "6";"4"};
{\ar@{=>}_{\xi} (-4,6);(-7.5,3)};
{\ar@{=>}^{\eta} (4,6);(7.5,3)};
\endxy
\]

\item[{\rm (ii)}]
For any triplets $(\gamma,\xi\ppr,\eta\ppr)$ as in {\rm (i)}, there exists a unique 2-cell $\zeta\co \al\cup\be\tc\gamma$ which satisfies $\xi\ppr\cdot(\zeta\ci\ups_{\xg})=\xi$ and $\eta\cdot\ppr(\zeta\ci\ups_{\yh})=\eta$.
\end{itemize}
By its universality, the 2-coproduct is determined up to adjoint equivalences, if it exists.
\end{dfn}

\begin{rem}
If $(\xg\am \yh,\ups_{\xg},\ups_{\yh})$ is a 2-coproduct of $\xg$ and $\yh$ in $\Sbb$, then $(\xg\am \yh,\und{\ups_{\xg}},\und{\ups_{\yh}})$ gives a coproduct of $\xg$ and $\yh$ in $\Cbb$.
\end{rem}

\begin{prop}\label{Prop2CoprodVari}
For any pair of 0-cells $\xg$ and $\yh$ in $\Sbb$, their 2-coproduct exists. 
Moreover if $G=H$, then $\xg\am\yg$ is given by $\frac{X\am Y}{G}$, where $X\am Y$ is the usual disjoint union of $G$-sets.
\end{prop}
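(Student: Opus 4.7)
The plan is to construct the 2-coproduct explicitly and verify its universal property.  I would begin with the ``moreover'' case $G=H$: take $\xg\am\yg:=\frac{X\am Y}{G}$, the ordinary disjoint union of $G$-sets, with injections $\ups_{\xg},\ups_{\yg}$ whose underlying maps are the set-theoretic inclusions and whose families are $\{\id_G\}$.  Given $\althh\co\xg\to\wl,\bet\co\yg\to\wl$, I would define $\al\cup\be$ with the evident union as underlying map and the family $\{\thh_x\}_{x\in X}\cup\{\tau_y\}_{y\in Y}$; the 1-cell axioms of Definition \ref{DefS}(1) reduce directly to those for $\al,\be$, and the composites $(\al\cup\be)\ci\ups_{\xg},(\al\cup\be)\ci\ups_{\yg}$ recover $\al,\be$ strictly, so $\xi,\eta$ may be taken to be identities.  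For uniqueness, any alternative triplet $(\gamma,\xi\ppr,\eta\ppr)$ forces a candidate 2-cell $\zeta\co\al\cup\be\tc\gamma$ with $\zeta_x=\xi\ppr_x$ and $\zeta_y=\eta\ppr_y$, whose well-definedness as a 2-cell in $\Sbb$ is a pointwise reduction to the 2-cell axioms for $\xi\ppr,\eta\ppr$.

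For the general case $G\ne H$, I would emphasize that the naive candidate $\frac{X\am Y}{G\ti H}$ with the cross-trivial action does \emph{not} work: if $\gamma$ has a family $\vp^{\gamma}_x(g,h)$ depending non-trivially on $h$ modulo stabilizers in $L$, then the 2-cell cocycle for any candidate $\zeta$ would force $\zeta_{gx}\thh_x(g)\zeta_x\iv=\vp^{\gamma}_x(g,h)$ for all $h$, which is impossible.  Instead I would take $\xg\am\yh:=\frac{(H\ti X)\am(G\ti Y)}{G\ti H}$, where $G\ti H$ acts by $(g,h)(h\ppr,x)=(hh\ppr,gx)$ on the first summand and $(g,h)(g\ppr,y)=(gg\ppr,hy)$ on the second; this is the induced $K$-set $(K\ti_G X)\am(K\ti_H Y)$ for $K=G\ti H$ along the canonical inclusions of $G,H$ into $K$.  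The injections send $x\mapsto(e_H,x)$ and $y\mapsto(e_G,y)$ with the constant families $\{\iog\}$ and $\{\ioh\}$ respectively.  Then $\al\cup\be$ is determined on canonical representatives by $(e_H,x)\mapsto\al(x)$ and $(e_G,y)\mapsto\be(y)$; choosing $\vp_{(e_H,x)}(e,h)=e$ and $\vp_{(e_G,y)}(g,e)=e$, the cocycle of Definition \ref{DefS}(1)(ii) forces $\vp_{(h,x)}(g_0,h_0)=\thh_x(g_0)$ and $\vp_{(g,y)}(g_0,h_0)=\tau_y(h_0)$, and all remaining axioms reduce to those for $\al,\be$.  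The uniqueness argument mirrors the $G=H$ case: $\zeta_{(e_H,x)},\zeta_{(e_G,y)}$ are forced to $\xi\ppr_x,\eta\ppr_y$, and the cocycle at $(e,h)(e_H,x)$ determines $\zeta_{(h,x)}=\vp^{\gamma}_{(e_H,x)}(e,h)\xi\ppr_x$.  When $G=H$, this general construction is adjoint-equivalent to $\frac{X\am Y}{G}$ via the 1-cell $x\mapsto(e,x),y\mapsto(e,y)$ with families $\theta^0_x(g)=(g,e),\theta^0_y(g)=(e,g)$, confirming the ``moreover''.

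The main technical hurdle is recognizing that the naive construction $\frac{X\am Y}{G\ti H}$ is too large (it admits $\gamma$'s outside the image of $(\al,\be)\mapsto\al\cup\be$) and identifying the induced construction as the correct candidate.  Once this is in place, the checks of the cocycle identities for $\vp_{(h,x)}$ and of the cocycle for the forced $\zeta$ at non-canonical points such as $(h,x)$ with $h\ne e_H$ are routine applications of Definition \ref{DefS}(1)(ii),(2)(ii), but they require careful bookkeeping of how the $G\ti H$-cocycle splits under the factorization $(g,h)=(g,e)(e,h)$.
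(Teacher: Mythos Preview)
The paper does not prove this proposition: it is one of several results quoted from \cite{N_BisetMackey} (see the sentence ``The following results have been shown in \cite{N_BisetMackey}'' preceding the block containing Proposition~\ref{Prop2CoprodVari}).  There is therefore no proof in this paper to compare against.

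That said, your argument is correct and is essentially the construction one expects.  The key step---realizing that for $G\ne H$ one must induce up to $G\times H$, taking $\frac{(H\times X)\amalg(G\times Y)}{G\times H}$ rather than the naive $\frac{X\amalg Y}{G\times H}$---is exactly right, and your explanation of why the naive candidate fails (1-cells out of it can have families depending on the ``wrong'' factor) identifies the obstruction precisely.  Your determination of $\varphi_{(h,x)}(g_0,h_0)=\theta_x(g_0)$ from the cocycle and the strict factorization $(\alpha\cup\beta)\circ\upsilon_{\xg}=\alpha$ is correct, as is the forced formula for $\zeta$ in the uniqueness clause; the verification that this $\zeta$ really is a 2-cell reduces, via the cocycle for $\varphi^{\gamma}$ and the factorization $(g_0,h_0h)=(e,h_0h)(g_0,e)$, to the 2-cell identity for $\xi'$, which you correctly flag as the routine but bookkeeping-heavy part.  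One small slip: with $\xi=\id$ and the compatibility $\xi'\cdot(\zeta\circ\upsilon_{\xg})=\xi$, you get $\zeta_{(e_H,x)}=(\xi'_x)^{-1}$ rather than $\xi'_x$; this is harmless since all 2-cells in $\Sbb$ are invertible, but worth writing correctly.
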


\begin{prop}\label{PropIndEquiv}
Let $\xg$ be any 0-cell in $\Sbb$. For any $x\in X$, if we denote the stabilizer by $G_x$ and the orbit by $Gx$, then there is a natural adjoint equivalence
\[ \zeta_x\co \frac{\pt}{G_x}\ov{\simeq}{\lra}\frac{Gx}{G}. \]
If we take a set of representatives $x_1,\ldots,x_s\in X$ of $G$-orbits, then the 1-cell obtained by the universality of the 2-coproduct
\[ \zeta=\underset{1\le i\le s}{\bigcup}\zeta_{x_i}\co \coprod_{1\le i\le s}\frac{\pt}{G_{x_i}}\ov{\simeq}{\lra}\coprod_{1\le i\le s}\frac{Gx_i}{G}\ov{\simeq}{\lra}\xg \]
gives an adjoint equivalence.
\end{prop}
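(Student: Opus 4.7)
The plan is to build $\zeta_x$ as an explicit adjoint equivalence, and then assemble the orbits via the 2-coproducts provided by Proposition~\ref{Prop2CoprodVari}. Define $\zeta_x\co\frac{\pt}{G_x}\to\frac{Gx}{G}$ by the map $*\mapsto x$ together with the homomorphism $\theta_*\co G_x\hookrightarrow G$ given by inclusion. The axioms of Definition~\ref{DefS}(1) reduce to $gx=x$ for $g\in G_x$ and to the fact that inclusion is a homomorphism. For a quasi-inverse, fix coset representatives $\{h_y\}_{y\in Gx}\subset G$ with $h_yx=y$, normalized so that $h_x=e$, and define $\eta_x\co\frac{Gx}{G}\to\frac{\pt}{G_x}$ with underlying map $y\mapsto *$ and $\tau_y(g)=h_{gy}\iv gh_y$. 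One checks $(h_{gy}\iv gh_y)x=x$, so the image lies in $G_x$, and axiom (ii) is immediate by telescoping.

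The composite $\eta_x\ci\zeta_x$ has underlying map $\id_{\pt}$ and homomorphism at $*$ given by $g\mapsto h_{gx}\iv gh_x=g$, using $gx=x$ for $g\in G_x$ and $h_x=e$; thus $\eta_x\ci\zeta_x=\id$ strictly and I take $\rho:=\id$. The composite $\zeta_x\ci\eta_x$ sends $y\mapsto x$ with homomorphism $g\mapsto h_{gy}\iv gh_y$, and the choice $\lam_y:=h_y$ defines a 2-cell $\lam\co\zeta_x\ci\eta_x\tc\id$: one has $\lam_yx=y$, and $\lam_{gy}(h_{gy}\iv gh_y)\lam_y\iv=h_{gy}h_{gy}\iv gh_yh_y\iv=g$.

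For the triangle identities, the whiskering formulas in $\Sbb$ read $(\ep\ci\be)_x=\ep_{\be(x)}$ for right whiskering and $(\be\ci\ep)_x=\tau_{\al_1(x)}(\ep_x)$ for left whiskering, where $\al_1$ is the source 1-cell of $\ep$ and $\tau$ is the homomorphism part of $\be$. Since $\rho=\id$, both $\zeta_x\ci\rho$ and $\rho\ci\eta_x$ are identity 2-cells. The remaining two whiskerings also vanish: $(\lam\ci\zeta_x)_*=\lam_{\zeta_x(*)}=\lam_x=h_x=e$, and $(\eta_x\ci\lam)_y=\tau_{(\zeta_x\eta_x)(y)}(\lam_y)=\tau_x(h_y)=h_{h_yx}\iv h_y=h_y\iv h_y=e$. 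Hence $\zeta_x\ci\rho=\lam\ci\zeta_x$ and $\rho\ci\eta_x=\eta_x\ci\lam$ both reduce to the identity 2-cell, exhibiting $\zeta_x$ as an adjoint equivalence with quasi-inverse $\eta_x$. The 1-cell $\zeta_x$ itself makes no choices, so it is natural in the data $(G,X,x)$.

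For the orbit decomposition, write $X=\coprod_iGx_i$ as $G$-sets; by Proposition~\ref{Prop2CoprodVari} this realizes $\xg$ as the 2-coproduct $\coprod_i\frac{Gx_i}{G}$. A 2-coproduct of adjoint equivalences is again an adjoint equivalence, as one verifies by pasting the summand-wise $\rho$ and $\lam$ through the universal property, so $\zeta=\bigcup_i\zeta_{x_i}$ is the desired adjoint equivalence. The main technical obstacle is keeping the whiskering formulas straight: the crucial computation $(\eta_x\ci\lam)_y=\tau_x(h_y)$ evaluates $\tau$ at the \emph{source} object $\zeta_x\eta_x(y)=x$ rather than at $y$, and it is this source-evaluation together with the normalization $h_x=e$ that yields the cancellation $h_y\iv h_y=e$ and makes the second triangle identity work.
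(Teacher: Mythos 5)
Your proof is correct, and since the paper cites this proposition from \cite{N_BisetMackey} without reproducing a proof, there is nothing in the present text to compare against. Your explicit construction — $\zeta_x$ via the inclusion $G_x\hookrightarrow G$, a quasi-inverse $\eta_x$ built from normalized coset representatives, identification of $\rho$ as the strict identity, $\lam_y=h_y$, and verification of the triangle identities through the whiskering formulas (where the key is that left whiskering evaluates $\tau$ at the \emph{source} image $(\zeta_x\ci\eta_x)(y)=x$, forcing the cancellation $h_y^{-1}h_y$) — is the natural one and matches the paper's conventions for adjoint equivalences in $\Sbb$. The only mildly under-justified step is the assertion that a 2-coproduct of adjoint equivalences is again an adjoint equivalence; this is a routine 2-categorical fact (build the quasi-inverse as the corresponding coproduct of the $\eta_{x_i}$ and invoke uniqueness of the comparison 2-cells from the universal property), so your one-line gesture is acceptable but could be spelled out if completeness is wanted.
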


\begin{dfn}\label{Def2Pullback}
For any pair of 1-cells
\[ \xg\ov{\al}{\lra}\zk\ov{\be}{\lla}\yh \]
in $\Sbb$, its {\it 2-fibered product} of $\al$ and $\be$ is defined to be a quartet $(\xg\times_{\zk}\yh,\gamma,\delta,\kappa)$ as in the diagram
\begin{equation}\label{Diag_2FibProd}
\xy
(-9,6)*+{\xg\times_{\zk}\yh}="0";
(9,6)*+{\yh}="2";
(-9,-6)*+{\xg}="4";
(9,-6)*+{\zk}="6";
{\ar^(0.6){\delta} "0";"2"};
{\ar_{\gamma} "0";"4"};
{\ar^{\be} "2";"6"};
{\ar_{\al} "4";"6"};
{\ar@{=>}_{\kappa} (-2,0);(2,0)};
\endxy
,
\end{equation}
which satisfies the following conditions.
\begin{itemize}
\item[{\rm (i)}]
For any diagram in $\Sbb$
\[
\xy
(-8,6)*+{\wl}="0";
(8,6)*+{\yh}="2";
(-8,-6)*+{\xg}="4";
(8,-6)*+{\zk}="6";
{\ar^{\psi} "0";"2"};
{\ar_{\vp} "0";"4"};
{\ar^{\be} "2";"6"};
{\ar_{\al} "4";"6"};
{\ar@{=>}^{\ep} (-2,0);(2,0)};
\endxy
,
\]
there exist $\pi,\xi,\eta$ as in the diagram
\[
\xy
(-22,16)*+{\wl}="-2";
(-8,6)*+{\xg\times_{\zk}\yh}="0";
(8,6)*+{\yh}="2";
(-8,-7)*+{\xg}="4";
(8,-7)*+{\zk}="6";
{\ar^{\pi} "-2";"0"};
{\ar@/^1.24pc/^{\psi} "-2";"2"};
{\ar@/_1.24pc/_(0.68){\vp} "-2";"4"};
{\ar_(0.66){\delta} "0";"2"};
{\ar^{\gamma} "0";"4"};
{\ar^{\be} "2";"6"};
{\ar_{\al} "4";"6"};
{\ar@{=>}_{\kappa} (-1.5,-1);(2.5,-1)};
{\ar@{=>}^{\xi} (-12,3);(-16,-1)};
{\ar@{=>}_{\eta} (-8,9);(-6,14)};
\endxy
,
\]
satisfying $\ep\cdot(\al\ci\xi)=(\be\ci\eta)\cdot(\kappa\ci \pi)$.
\item[{\rm (ii)}]
For any triplets $(\pi\ppr,\vp\ppr,\psi\ppr)$ as in {\rm (i)}, there exists a unique 2-cell $\zeta\co\pi\tc\pi\ppr$ which satisfies $\xi\ppr\cdot(\gamma\ci\zeta)=\xi$ and $\eta\ppr\cdot(\delta\ci\zeta)=\eta$
\end{itemize}
By its universality, the 2-fibered product is determined up to adjoint equivalences, if it exists.
\end{dfn}

\begin{prop}\label{Prop2Pullback}
For any pair of 1-cells
\[ \xg\ov{\al}{\lra}\zk\ov{\be}{\lla}\yh, \]
its 2-fibered product exists in $\Sbb$.
\end{prop}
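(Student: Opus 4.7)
The plan is to construct the 2-fibered product explicitly and then verify its 2-universal property by unfolding Definition 2.1 at each step. Writing $\al = (\alpha, \{\theta_x\}_{x\in X})$ and $\be = (\beta, \{\tau_y\}_{y\in Y})$, I set
\[ P = \{(x,k,y) \in X\ti K\ti Y \mid k\,\alpha(x) = \beta(y)\} \]
and equip it with the left $G\ti H$-action
\[ (g,h)\cdot (x,k,y) = (gx,\, \tau_y(h)\,k\,\theta_x(g)\iv,\, hy). \]
This is well-defined: the cocycle axiom (ii) of Definition 2.1 (1) applied to $\theta$ and $\tau$ gives the compositional compatibility, and the constraint $k\alpha(x) = \beta(y)$ is preserved by the formula via $\alpha(gx)=\theta_x(g)\alpha(x)$ and $\beta(hy)=\tau_y(h)\beta(y)$. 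I then take $\gamma\co\frac{P}{G\ti H}\to \xg$ and $\delta\co\frac{P}{G\ti H}\to\yh$ to be the set-projections on $P$ paired with the coordinate group-projections, and define $\kappa\co\al\ci\gamma\tc\be\ci\delta$ by $\kappa_{(x,k,y)} = k$. Axiom (i) of Definition 2.1 (2) for $\kappa$ is the defining equation of $P$; axiom (ii) collapses onto the middle-coordinate formula of the action. This yields a quartet as in diagram \eqref{Diag_2FibProd}.

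For the existence clause of the universal property, given $(\wl, \varphi, \psi, \ep)$ where $\varphi$ has underlying map $\varphi\co W\to X$ and group data $\{\mu_w\co L\to G\}_{w\in W}$, $\psi$ has underlying map $\psi\co W\to Y$ and group data $\{\nu_w\co L\to H\}_{w\in W}$, and $\ep\co\al\ci\varphi\tc\be\ci\psi$, I would set
\[ \pi(w) = (\varphi(w),\, \ep_w,\, \psi(w)),\qquad \theta^{\pi}_w = (\mu_w, \nu_w)\co L\to G\ti H. \]
The 1-cell axioms (i) and (ii) for $\pi$ reduce directly to the 2-cell axioms for $\ep$ combined with the 1-cell axioms for $\varphi$ and $\psi$. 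Since $\gamma\ci\pi = \varphi$ and $\delta\ci\pi = \psi$ strictly, I take $\xi=\id$ and $\eta=\id$, and the required compatibility $\ep\cdot(\al\ci\xi) = (\be\ci\eta)\cdot(\kappa\ci\pi)$ becomes the tautology $\ep_w = \kappa_{\pi(w)} = \ep_w$.

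For the uniqueness clause, given any competing triplet $(\pi\ppr, \xi\ppr, \eta\ppr)$, the equations $\xi\ppr\cdot(\gamma\ci\zeta) = \xi = \id$ and $\eta\ppr\cdot(\delta\ci\zeta) = \eta = \id$ force
\[ \zeta_w = \bigl((\xi\ppr_w)\iv,\, (\eta\ppr_w)\iv\bigr)\in G\ti H, \]
which already yields uniqueness. It then remains to verify that this $\zeta$ is indeed a 2-cell $\pi\tc\pi\ppr$: axiom (i) splits into three coordinate identities in $P$, the first and third being axiom (i) applied to $\xi\ppr$ and $\eta\ppr$, while the middle coordinate is exactly the assumed compatibility $\ep\cdot(\al\ci\xi\ppr) = (\be\ci\eta\ppr)\cdot(\kappa\ci\pi\ppr)$; axiom (ii) follows coordinate-wise from the 2-cell axioms of $\xi\ppr$ and $\eta\ppr$. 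The main obstacle I foresee is purely notational, namely tracking three coordinates and the interaction between the $G\ti H$-action on $P$ and the $L$-action on $W$, but each identity reduces to a mechanical unfolding of the axioms of Definition 2.1.
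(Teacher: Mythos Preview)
Your construction is correct and is essentially the standard one. The paper itself does not give a proof of this proposition here; it is one of the results imported from \cite{N_BisetMackey} (see the sentence ``The following results have been shown in \cite{N_BisetMackey}'' preceding the block of statements in Section~2). Your explicit model $P=\{(x,k,y)\mid k\alpha(x)=\beta(y)\}$ with the $G\times H$-action $(g,h)\cdot(x,k,y)=(gx,\tau_y(h)k\theta_x(g)^{-1},hy)$ is exactly the construction used in that reference, and your verification of the universal property unwinds correctly: the existence clause with $\xi=\eta=\id$ works because the projections $\gamma,\delta$ recover $\varphi,\psi$ on the nose, and in the uniqueness clause the middle-coordinate identity for axiom~(i) of $\zeta$ does indeed reduce, via the cocycle relations $\tau_{\eta'_w y'}((\eta'_w)^{-1})=\tau_{y'}(\eta'_w)^{-1}$ and $\theta_{\xi'_w x'}((\xi'_w)^{-1})=\theta_{x'}(\xi'_w)^{-1}$, to the compatibility $\ep\cdot(\alpha\ci\xi')=(\beta\ci\eta')\cdot(\kappa\ci\pi')$. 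There is no gap.
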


\begin{rem}
Even if $(\ref{Diag_2FibProd})$ is a 2-fibered product in $\Sbb$, its image in $\Csc$
\[
\xy
(-9,6)*+{\xg\times_{\zk}\yh}="0";
(9,6)*+{\yh}="2";
(-9,-6)*+{\xg}="4";
(9,-6)*+{\zk}="6";
{\ar^(0.6){\und{\delta}} "0";"2"};
{\ar_{\und{\gamma}} "0";"4"};
{\ar^{\und{\be}} "2";"6"};
{\ar_{\und{\al}} "4";"6"};
{\ar@{}|\circlearrowright "0";"6"};
\endxy
\]
is not necessarily a fibered product in $\Csc$. In fact, this is only a weak fibered product.
Nevertheless, these weak fibered products which come from 2-fibered products are closed under isomorphisms in $\Csc$, and thus form a natural distinguished class among weak fibered products.
\end{rem}

\begin{dfn}\label{DefStabsurj}
A 1-cell $\al\co\xg\to\yh$ is called {\it surjective on stabilizers} or shortly {\it stab-surjective}, if the following conditions are satisfied.
\begin{itemize}
\item[{\rm (i)}] $Y=H\al(X)$ holds.
\item[{\rm (ii)}] If $x,x\ppr\in X$ and $h,h\ppr\in H$ satisfy $h\al(x)=h\ppr\al(x\ppr)$, then there exists $g\in G$ which satisfies $x\ppr=gx$ and $h=h\ppr\thh(g)$.
\end{itemize}
\end{dfn}

\begin{prop}\label{PropStabsurj}
The following holds for the stab-surjectivity.
\begin{enumerate}
\item If $\al$ is an adjoint equivalence, then $\al$ is stab-surjective.
\item Stab-surjectivity is closed under equivalences of 1-cells. Namely, if there exists a 2-cell $\ep\co\al\tc\al\ppr$ and if $\al$ is stab-surjective, then so is $\al\ppr$.
\item Stab-surjectivity is closed under compositions of 1-cells. Namely, if $\xg\ov{\al}{\lra}\yh\ov{\be}{\lra}\zk$ is a sequence of 1-cells and if $\al$ and $\be$ are stab-surjective, then so is $\be\ci\al$.
\item Stab-surjectivity is closed under 2-pullbacks. Namely, if
\[
\xy
(-8,6)*+{\wl}="0";
(8,6)*+{\yh}="2";
(-8,-6)*+{\xg}="4";
(8,-6)*+{\zk}="6";
{\ar^{\delta} "0";"2"};
{\ar_{\gamma} "0";"4"};
{\ar^{\be} "2";"6"};
{\ar_{\al} "4";"6"};
{\ar@{=>}^{\ep} (-2,0);(2,0)};
\endxy
\]
is a 2-fibered product in $\Sbb$ and if $\be$ is stab-surjective, then so is $\gamma$.
\end{enumerate}
\end{prop}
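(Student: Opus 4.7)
My plan is to verify Definition~\ref{DefStabsurj} in each of the four cases. Items (1)--(3) reduce to direct unpackings of 2-cell and compositional data, while (4) is the main obstacle and will require the explicit model of the 2-fibered product from Proposition~\ref{Prop2Pullback}.

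For (1), a quasi-inverse $\be$ of $\al$ comes with 2-cells $\rho\co\be\ci\al\tc\id$ and $\lam\co\al\ci\be\tc\id$; the identity $y=\lam_y\al(\be(y))$ yields (i), and applying $\be$ to an equation $h\al(x)=h'\al(x')$ and then contracting by $\rho$ reads off the stabilizer condition (ii). For (2), from $\al'(x)=\ep_x\al(x)$ one has $H\al'(X)=H\al(X)$ immediately, and the 2-cell cocycle $\ep_{gx}\thh_x(g)\ep_x\iv=\thh'_x(g)$ converts the stabilizer condition for $\al$ into that for $\al'$. For (3), the composite $\be\ci\al$ carries homomorphism family $\tau_{\al(x)}\ci\thh_x$, where $\tau$ is the family of $\be$; condition (i) follows from the fact that $\be$ sends $h\al(x)$ to $\tau_{\al(x)}(h)\be(\al(x))$, giving $K\be(Y)=K(\be\ci\al)(X)$, while for (ii) I would first apply stab-surjectivity of $\be$ to lift the given equality on $Z$ to an equality on $Y$ modulo an element of $H$, then apply stab-surjectivity of $\al$ to descend it to $X$ modulo an element of $G$.

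For (4), I would exploit the explicit model of the 2-fibered product from Proposition~\ref{Prop2Pullback}: the underlying set $W$ is (a quotient of) triples $(x,y,k)$ satisfying $\al(x)=k\be(y)$, with $\gamma$ given by projection to the first coordinate and the 2-cell $\kappa$ recording the $K$-element. For (i), given $x\in X$, stab-surjectivity of $\be$ provides $y\in Y$ and $k\in K$ with $\al(x)=k\be(y)$, so $(x,y,k)$ is a point of $W$ mapping to $x$ under $\gamma$. For (ii), given $g\gamma(w)=g'\gamma(w')$ with $w=(x,y,k)$ and $w'=(x',y',k')$, I would transport the equation via $\al$ to an equation in $Z$, apply stab-surjectivity of $\be$ there to relate $(y,k)$ and $(y',k')$ by an element of $H$, and then package the resulting pieces of $G$- and $H$-data into an element of $L$ by invoking the fibered structure of the source group $L$. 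The main delicate point, and the place where the explicit construction from \cite{N_BisetMackey} must be brought in, is tracking how the homomorphism family attached to $\gamma$ is assembled from the fibered structure on $L$, so that the resulting $\ell\in L$ yields the required relation $g=g'\vp_w(\ell)$ in $G$.
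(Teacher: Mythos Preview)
The paper does not give a proof of this proposition at all: it is listed in Section~2 among the results \emph{reviewed} from \cite{N_BisetMackey}, with the blanket remark ``Details can be found in \cite{N_BisetMackey}.'' So there is nothing in the present paper to compare your argument against; the paper's ``proof'' is simply a citation.

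That said, your outline is a reasonable direct verification and is essentially what one finds in \cite{N_BisetMackey}. A couple of comments on the sketch. In (1), the step ``apply $\be$ to $h\al(x)=h'\al(x')$ and contract by $\rho$'' does produce an element $g\in G$ with $x'=gx$, but checking the second requirement $h=h'\thh_x(g)$ genuinely uses the triangle identity $\al\ci\rho=\lam\ci\al$, not just the existence of $\rho$ and $\lam$; you should flag that explicitly. In (4), your plan is correct in spirit, but be aware that in the explicit model of \cite{N_BisetMackey} the acting group $L$ on the 2-fibered product is itself a fibered product of $G$ and $H$ over $K$ (roughly, pairs $(g,h)$ with $\thh(g)$ and $\tau(h)$ agreeing up to the 2-cell $\kappa$), and the family attached to $\gamma$ is projection to the $G$-factor. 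With that description the verification of (ii) is a short computation: from $gx=g'x'$ you get an equation in $Z$ via $\al$, stab-surjectivity of $\be$ then supplies $h\in H$ with $y'=hy$ and the correct compatibility in $K$, and the pair $(g'^{-1}g,h)$ (adjusted by the 2-cell data) lands in $L$. The ``delicate point'' you mention is really just keeping track of which $\kappa$-twist appears where; there is no further idea needed.
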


\begin{dfn}\label{Def2Slice}
Let $\xg$ be any 0-cell in $\Sbb$. Then a 2-category $\Sxg$ is defined as follows.
\begin{enumerate}
\item[{\rm (0)}] A 0-cell in $\Sxg$ is a 1-cell $(\akaxg)$ in $\Sbb$, from some $\ak\in\Sbb^0$.
\item[{\rm (1)}] A 1-cell in $\Sxg$ from $(\akaxg)$ to $(\blbxg)$ is a pair $(\vp,\mu)$ of a 1-cell $\vp$ and a 2-cell $\mu$ in $\Sbb$ as in the following diagram.
\[
\xy
(-10,6)*+{\ak}="0";
(10,6)*+{\bl}="2";
(0,-8)*+{\xg}="4";
{\ar^{\vp} "0";"2"};
{\ar_(0.4){\al} "0";"4"};
{\ar^(0.38){\be} "2";"4"};
{\ar@{=>}^{\mu} (2,3);(-2,0)};
\endxy
\]
\item[{\rm (2)}] If $(\vp,\mu)\co (\akaxg)\to(\blbxg)$ and $(\vp\ppr,\mu\ppr)\co(\akaxg)\to(\blbxg)$ are 1-cells in $\Sxg$, then a 2-cell $\ep\co (\vp,\mu)\tc(\vp\ppr,\mu\ppr)$ in $\Sxg$ is a 2-cell $\ep\co \vp\tc \vp\ppr$ in $\Sbb$, which makes the following diagram commutative.
\[
\xy
(-10,6)*+{\be\ci \vp}="0";
(10,6)*+{\be\ci \vp}="2";
(0,-8)*+{\al}="4";
(0,10)*+{}="5";
{\ar@{=>}^{\be\ci\ep} "0";"2"};
{\ar@{=>}_(0.4){\mu} "0";"4"};
{\ar@{=>}^(0.4){\mu\ppr} "2";"4"};
{\ar@{}|\circlearrowright "4";"5"};
\endxy
\]
\end{enumerate}

Composition of 1-cells
\[ (\akaxg)\ov{(\varphi,\mu)}{\lra}(\blbxg)\ov{(\psi,\nu)}{\lra}(\chcxg) \]
is defined to be
\[ (\psi\ci \varphi,\mu\cdot(\nu\ci \varphi))\co (\akaxg)\to(\chcxg). \]
Vertical composition of 2-cells
\[
\xy
(-20,0)*+{(\akaxg)}="0";
(20,0)*+{(\blbxg)}="2";
{\ar@/^2.0pc/^{(\varphi,\mu)} "0";"2"};
{\ar|*+{_{(\varphi\ppr,\mu\ppr)}} "0";"2"};
{\ar@/_2.0pc/_{(\varphi\pprr,\mu\pprr)} "0";"2"};
{\ar@{=>}^{\ep} (0,6);(0,3)};
{\ar@{=>}^{\ep\ppr} (0,-3);(0,-6)};
\endxy
\]
is defined to be $\ep\ppr\cdot\ep$, using the vertical composition in $\Sbb$.
Horizontal composition is also defined by using the horizontal composition in $\Sbb$.
\end{dfn}

\begin{rem}
For any $\xg\in\Sbb^0$, the 2-category $\Sxg$ has 2-coproducts and 2-products, induced from 2-coproducts and 2-fibered products in $\Sbb$.
If we denote the set of adjoint equivalence classes of 0-cells in $\Sxg$ by $\Afr(\xg)$, then it has a structure of commutative semi-ring, with the addition induced from 2-coproducts and the multiplication induced from 2-fibered products. 
\end{rem}

\begin{dfn}
For any $\xg\in\Sbb^0$, the additive ring completion of $\Afr(\xg)$ is denoted by $\Obig(\xg)$, and called the {\it bigger Burnside ring}.
\end{dfn}

\begin{dfn}\label{DefSemiMackS}
A {\it semi-Mackey functor} $M=(M^{\ast},M_{\ast})$ on $\Sbb$ is a pair of a contravariant 2-functor
\[ M^{\ast}\co\Sbb\to\Sett \]
and a covariant 2-functor
\[ M_{\ast}\co\Sbb\to\Sett \]
which satisfies the following.
\begin{enumerate}
\item[{\rm (0)}] $M^{\ast}(\xg)=M_{\ast}(\xg)$ for any 0-cell $\xg\in\Sbb^0$. We denote this simply by $M(\xg)$.
\item[{\rm (1)}] [Additivity] For any pair of 0-cells $\xg$ and $\yh$ in $\Sbb$, if we take their 2-coproduct
\[ \xg\ov{\ups_X}{\lra}\xg\am\yh\ov{\ups_Y}{\lla}\yh \]
in $\Sbb$, then the natural map
\begin{equation}\label{RCoeffAdd1}
(M^{\ast}(\ups_X),M^{\ast}(\ups_Y))\co M(\xg\am\yh)\to M(\xg)\times M(\yh)
\end{equation}
is bijective. Also, $M(\emptyset)$ is a singleton.
\item[{\rm (2)}] [Mackey condition] For any 2-fibered product
\begin{equation}\label{RCoeffAdd2}
\xy
(-8,6)*+{\wl}="0";
(8,6)*+{\yh}="2";
(-8,-6)*+{\xg}="4";
(8,-6)*+{\zk}="6";
{\ar^{\delta} "0";"2"};
{\ar_{\gamma} "0";"4"};
{\ar^{\be} "2";"6"};
{\ar_{\al} "4";"6"};
{\ar@{=>}^{\kappa} (-2,0);(2,0)};
\endxy
\end{equation}
in $\Sbb$, the following diagram in $\Sett$ becomes commutative.
\begin{equation}\label{RCoeffAdd3}
\xy
(-12,7)*+{M(\wl)}="0";
(12,7)*+{M(\yh)}="2";
(-12,-7)*+{M(\xg)}="4";
(12,-7)*+{M(\zk)}="6";
{\ar_{M^{\ast}(\delta)} "2";"0"};
{\ar_{M_{\ast}(\gamma)} "0";"4"};
{\ar^{M_{\ast}(\be)} "2";"6"};
{\ar^{M^{\ast}(\al)} "6";"4"};
{\ar@{}|\circlearrowright "0";"6"};
\endxy
\end{equation}
\end{enumerate}
A {\it morphism} $\varphi\co M\to N$ of semi-Mackey functors is a family of maps
\[ \varphi=\{ \varphi_{\xg}\co M(\xg)\to N(\xg) \}_{\xg\in\Sbb^0} \]
compatible with contravariant and covariant parts. Namely, it gives natural transformations
\[ \varphi\co M^{\ast}\tc N^{\ast}\ \ \ \text{and}\ \ \ \varphi\co M_{\ast}\tc N_{\ast}. \]
With the usual composition of natural transformations, we obtain the category of semi-Mackey functors denoted by $\SMackS$.
\end{dfn}

\begin{rem}\label{RemSemiMackMon}
Similarly as in the case of ordinary semi-Mackey functors, the functors  $M^{\ast}$ and $M_{\ast}$ become functors to $\Mon$, and $\varphi$ becomes a natural transformation between such functors.
\end{rem}

\begin{dfn}\label{DefRLinearMack}
An $R$-{\it linear Mackey functor} $M=(M^{\ast},M_{\ast})$ on $\Sbb$ is a pair of a contravariant 2-functor
\[ M^{\ast}\co\Sbb\to\RMod \]
and a covariant 2-functor
\[ M_{\ast}\co\Sbb\to\RMod, \]
which satisfies the following.
\begin{enumerate}
\item[{\rm (0)}] $M^{\ast}(\xg)=M_{\ast}(\xg)\, (=M(\xg))$ for any 0-cell $\xg\in\Sbb^0$. 
\item[{\rm (1)}] [Additivity] For any pair of 0-cells $\xg$ and $\yh$ in $\Sbb$, the natural map $(\ref{RCoeffAdd1})$ is an isomorphism in $\RMod$. $M(\emptyset)=0$ is the zero module.
\item[{\rm (2)}] [Mackey condition] For any 2-fibered product $(\ref{RCoeffAdd2})$ in $\Sbb$, the diagram $(\ref{RCoeffAdd3})$ is a commutative diagram in $\RMod$.
\end{enumerate}
A {\it morphism} $\varphi\co M\to N$ of $R$-linear Mackey functors is a family $\varphi=\{ \varphi_{\xg}\}_{\xg\in\Sbb^0}$ of $R$-homomorphisms compatible with contravariant and covariant parts.
We denote the category of $R$-linear Mackey functors by $\MackSR$.

When $R=\Zbb$, an $R$-linear Mackey functor is simply called a {\it Mackey functor}, and the category of Mackey functors is denoted by $\MackS$.
\end{dfn}

\begin{rem}\label{RemTensorR}
Remark that the additive completion of monoids gives a functor $K_0\co \Mon\to\Ab$. From any semi-Mackey functor $M=(M^{\ast}, M_{\ast})$, by composing $K_0$ we obtain a Mackey functor $K_0M=(K_0\ci M^{\ast},K_0\ci M_{\ast})$ on $\Sbb$.
This gives a functor $K_0\co\SMackS\to\MackS$, which is left adjoint to the inclusion functor $\MackS\hookrightarrow\SMackS$.

Furthermore, tensoring $R$ gives an additive functor $-\otimes_{\mathbb{Z}} R\co \Ab\to\RMod$. From any semi-Mackey functor $M=(M^{\ast}, M_{\ast})$, by composing $-\otimes_{\mathbb{Z}} R$ and $K_0$, we obtain an $R$-linear Mackey functor $M^R=((-\otimes_{\mathbb{Z}} R)\ci K_0\ci M^{\ast},(-\otimes_{\mathbb{Z}} R)\ci K_0\ci M_{\ast})$ on $\Sbb$.
This gives a functor $(-)^R\co\SMackS\to\MackSR$, which is left adjoint to the forgetful functor $\MackSR\rightarrow\SMackS$.
\end{rem}

\begin{dfn}\label{DefDeflMack}
A semi-Mackey functor $M$ on $\Sbb$ is called {\it deflative} if for any stab-surjective 1-cell $\al\co\xg\to\yh$ in $\Sbb$, the equality
\[ M_{\ast}(\al)\ci M^{\ast}(\al)=\id_{M(\yh)} \]
is satisfied. An $R$-linear Mackey functor is called deflative if it is deflative as a semi-Mackey functor. 

The full subcategory of deflative semi-Mackey functors is denoted by $\SMack_{\dfl}(\Sbb)\subseteq\SMackS$. 
Similarly, the full subcategory of deflative $R$-linear Mackey functors is denoted by $\Mack_{\dfl}^R(\Sbb)\subseteq\MackSR$.
\end{dfn}

\begin{ex}
The correspondence $\xg\mapsto\Obig(\xg)$ gives a Mackey functor, which is {\it not} deflative. This is called the {\it bigger Burnside functor}.
\end{ex}

\begin{prop}\label{PropDeflMack}
For an $R$-linear $($resp. semi-$)$Mackey functor $M$ on $\Sbb$, the following are equivalent.
\begin{enumerate}
\item $M$ is deflative.
\item For any surjective group homomorphism $f\co G\to H$, the equality
\[ M_{\ast}(\frac{\pt}{p})\ci M^{\ast}(\frac{\pt}{p})=\id \]
is satisfied for the 1-cell $\frac{\pt}{p}\co\ptg\to\pth$.
\end{enumerate}
\end{prop}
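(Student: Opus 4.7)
The direction $(1)\Rightarrow(2)$ is immediate: for a surjective group homomorphism $p\co G\to H$, the 1-cell $\frac{\pt}{p}\co\ptg\to\pth$ satisfies the two clauses of Definition~\ref{DefStabsurj} (clause (i) is trivial, while clause (ii) applied at the single point $\pt$ is precisely the surjectivity of $p$), hence is stab-surjective, so (1) specializes to (2).

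For $(2)\Rightarrow(1)$, the key preliminary observation is that \emph{every adjoint equivalence $\zeta\co\xg\to\yh$ is automatically $M$-deflative}, regardless of hypothesis (2). Indeed, by the 2-universal property of Definition~\ref{Def2Pullback}, the 2-fibered product $\xg\times_{\yh}\xg$ is adjoint-equivalent to $\xg$ via the diagonal $(\id,\id,\id_{\zeta})$: any competing datum $(\vp_1,\vp_2,\zeta\ci\vp_1\tc\zeta\ci\vp_2)$ can be reduced by composing with a quasi-inverse $\zeta\iv$ to yield a 2-cell $\vp_1\tc\vp_2$. Both projections from $\xg\times_{\yh}\xg$ are then adjoint-equivalent to $\id_{\xg}$, so the Mackey condition of Definition~\ref{DefSemiMackS}/\ref{DefRLinearMack} applied to this 2-fibered product yields $M^{\ast}(\zeta)\ci M_{\ast}(\zeta)=\id$. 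Since $M^{\ast}(\zeta)$ is invertible with inverse $M^{\ast}(\zeta\iv)$ (by 2-functoriality applied to $\zeta\iv\ci\zeta\tc\id$), we deduce $M_{\ast}(\zeta)=M^{\ast}(\zeta)\iv$ and in particular $M_{\ast}(\zeta)\ci M^{\ast}(\zeta)=\id$ as required.

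Now let $\al\co\xg\to\yh$ be an arbitrary stab-surjective 1-cell. The first reduction decomposes $Y=\coprod_{j}Hy_j$ into $H$-orbits: clause (i) of Definition~\ref{DefStabsurj} ensures each orbit meets $\al(X)$, and clause (ii) forces $\al\iv(Hy_j)$ to be a single $G$-orbit $Gx_j$. The induced decomposition $\al=\coprod_j\al_j$, with $\al_j\co\frac{Gx_j}{G}\to\frac{Hy_j}{H}$ again stab-surjective by Proposition~\ref{PropStabsurj}(2), reduces the desired equality to each $\al_j$ via the additivity axiom. The second reduction invokes Proposition~\ref{PropIndEquiv} to obtain adjoint equivalences $\zeta_{x_j}\co\frac{\pt}{G_{x_j}}\ov{\simeq}{\lra}\frac{Gx_j}{G}$ and $\zeta_{y_j}\co\frac{\pt}{H_{y_j}}\ov{\simeq}{\lra}\frac{Hy_j}{H}$; the composite $\al_j':=\zeta_{y_j}\iv\ci\al_j\ci\zeta_{x_j}$ is stab-surjective by Proposition~\ref{PropStabsurj}(1),(3), and since any 1-cell between singleton 0-cells is 2-equivalent to $\frac{\pt}{p}$ for some homomorphism $p\co G_{x_j}\to H_{y_j}$, inspecting Definition~\ref{DefStabsurj} at singletons shows that stab-surjectivity of $\al_j'$ amounts to surjectivity of $p$. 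Hypothesis (2) then delivers $M_{\ast}(\al_j')\ci M^{\ast}(\al_j')=\id$, and writing $\al_j=\zeta_{y_j}\ci\al_j'\ci\zeta_{x_j}\iv$, the preliminary observation applied to $\zeta_{x_j}\iv$ and to $\zeta_{y_j}$ lets us peel off the adjoint equivalences to conclude $M_{\ast}(\al_j)\ci M^{\ast}(\al_j)=\id$.

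The main obstacle is the preliminary observation of paragraph two: the verification that the diagonal into $\xg\times_{\yh}\xg$ is an adjoint equivalence when $\zeta$ is, so that the Mackey condition delivers identity maps on both sides. Once this Mackey-theoretic input is in place, the orbit decomposition and the descent to singleton 0-cells are routine consequences of additivity and Proposition~\ref{PropIndEquiv}.
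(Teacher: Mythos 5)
Your proof is correct, and the strategy is the natural one. A caveat on the comparison: the present paper states Proposition~\ref{PropDeflMack} as a result carried over from \cite{N_BisetMackey} and gives no proof here, so there is no in-text argument to set yours against; but your route---first showing via the Mackey square of $\zeta$ against itself (apex $\xg$, identity projections, trivial 2-cell) that every adjoint equivalence $\zeta$ already satisfies $M^{\ast}(\zeta)\ci M_{\ast}(\zeta)=\id$ and hence, by invertibility of $M^{\ast}(\zeta)$, the deflativity equation $M_{\ast}(\zeta)\ci M^{\ast}(\zeta)=\id$; then decomposing a stab-surjective $\al$ into orbit components by additivity; then conjugating by the adjoint equivalences of Proposition~\ref{PropIndEquiv} to land on the one-point case covered by hypothesis~(2)---is almost certainly what the cited proof does, and it is correct as written. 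The $(1)\Rightarrow(2)$ direction and the identification of stab-surjective 1-cells $\ptg\to\pth$ with 1-cells $\ptf$ for $f$ surjective are both right.

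One small citation slip: the stab-surjectivity of the restricted 1-cells $\al_j\co\frac{Gx_j}{G}\to\frac{Hy_j}{H}$ does not follow from Proposition~\ref{PropStabsurj}(2), which concerns closure under 2-cells, not under restriction to orbits. It is instead a direct check from Definition~\ref{DefStabsurj}: clause~(ii) already forces $\al$ to induce a bijection on orbit sets, and both clauses are statements local to a single $H$-orbit of $Y$ together with its unique $G$-orbit preimage, so they restrict verbatim. This is a one-line fix and does not affect the soundness of the argument.
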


An $R$-linear biset functor $B$ is defined to be an $R$-linear functor $B\co \Bcal_R\to\RMod$, from the {\it biset category} $\Bcal_R$ to $\RMod$.
The biset category which we deal with in this article is the following one.
\begin{introdfn}
An $R$-linear category $\Bcal_R$ is defined as follows.
\begin{enumerate}
\item An object in $\Bcal_R$ is a finite group.
\item For objects $G,H$ in $\Bcal_R$, consider a set of the isomorphism classes of finite $H$-$G$-bisets. This forms a commutative monoid with addition $\am$ and unit $\emptyset$, and thus we can take its additive completion $\Bcal(G,H)$. We define $\Bcal_{R}(G,H)$ by $\Bcal_{R}(G,H)=\Bcal(G,H)\otimes R$. This is the set of morphisms from $G$ to $H$ in $\Bcal_R$.

An $H$-$G$-biset $U$ is written as ${}_HU_G$.
The composition of two consecutive bisets ${}_{H}U_G$ and ${}_{K}V_H$ is given by
\[ V\times_HU=(V\times U)/\sim, \]
where the equivalence relation is defined as
\begin{itemize}
\item[-] $(v,u),(v\ppr,u\ppr)\in V\times U$ are equivalent if there exists $h\in H$ satisfying $v=v\ppr h$ and $u\ppr=hu$.
\end{itemize}
This defines the composition of morphisms in $\Bcal_R$, by linearity.
\end{enumerate}
When $R=\Zbb$, we denote $\Bcal_{\Zbb}$ simply by $\Bcal$. This is a preadditive category.
\end{introdfn}
We denote the category of $R$-linear biset functors by $\BisetFtr^R$. This is naturally equivalent to the category $\Add(\Bcal,\RMod)$ of additive functors from $\Bcal$ to $\RMod$.
The following has been shown in \cite{N_BisetMackey}.
\begin{thm}\label{ThmBF}
There is an equivalence of categories
\[ \Mack_{\dfl}^R(\Sbb)\simeq\BisetFtr^R. \]
\end{thm}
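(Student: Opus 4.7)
The plan is to construct explicit quasi-inverse functors
\[ \Phi\co\MackdSR\rightleftarrows\BisetFtr^R\co\Psi. \]
For $\Phi$, given a deflative Mackey functor $M$, set $\Phi(M)(G):=M(\ptg)$ on objects. To each finite $(H,G)$-biset $U$, regarded as a left $(H\times G^{op})$-set, associate the 0-cell $\frac{U}{H\times G^{op}}\in\Sbb^0$. The group projections $H\times G^{op}\twoheadrightarrow H$ and $H\times G^{op}\twoheadrightarrow G^{op}\cong G$ produce 1-cells assembling into a span
\[ \ptg\ov{\gamma_U}{\lla}\frac{U}{H\times G^{op}}\ov{\delta_U}{\lra}\pth, \]
and I define $\Phi(M)(U):=M_{\ast}(\delta_U)\ci M^{\ast}(\gamma_U)$. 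Extending by $R$-linearity gives a map $\Bcal_R(G,H)\to\RMod(M(\ptg),M(\pth))$.

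The critical verification is that biset composition $V\times_HU$ corresponds, at the level of spans, to the 2-fibered product (Definition~\ref{Def2Pullback}) of the spans of $U$ and $V$ over $\pth$. Granting this, the Mackey condition (Definition~\ref{DefRLinearMack}(2)) yields $\Phi(M)(V\times_HU)=\Phi(M)(V)\ci\Phi(M)(U)$, and the disjoint union $U\am U\ppr$ of bisets realises a 2-coproduct at the middle of the span, so additivity of $\Phi(M)$ follows from Definition~\ref{DefRLinearMack}(1). Naturality in $M$ is routine.

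For $\Psi$, given a biset functor $B$, use Proposition~\ref{PropIndEquiv} to decompose each 0-cell as $\xg\simeq\coprod_i\frac{\pt}{G_{x_i}}$ up to adjoint equivalence, and set $\Psi(B)(\xg):=\bigoplus_iB(G_{x_i})$. For a 1-cell $\al\co\xg\to\yh$, each orbit piece encodes a homomorphism between stabilisers together with an orbit map; the associated transitive biset, fed to $B$, produces the contravariant part $\Psi(B)^{\ast}(\al)$, and the opposite biset produces $\Psi(B)_{\ast}(\al)$. One checks independence of choices (the conjugation ambiguity being absorbed by biset isomorphisms), verifies the Mackey condition (by the same span/pullback dictionary), and establishes deflativity: for a surjection $p\co G\twoheadrightarrow H$, the biset ${}_HH_G$ via $p$ composes with its opposite to the identity biset ${}_HH_H$, forcing $\Psi(B)_{\ast}(\frac{\pt}{p})\ci\Psi(B)^{\ast}(\frac{\pt}{p})=\id$, so Proposition~\ref{PropDeflMack} applies.

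The principal obstacle is installing the span/biset dictionary used throughout: one must identify the 2-fibered product of spans in $\Sbb$ with biset composition $V\times_HU$, tracking how the quotient by $H$ arises from the 2-pullback construction and how the opposite groups recombine correctly. Once this is in place, verifying that $\Phi\ci\Psi$ and $\Psi\ci\Phi$ are naturally isomorphic to the identity reduces, via Proposition~\ref{PropIndEquiv} and the definition of $\Psi$ on orbit-type 0-cells, to the tautological identifications $\Phi(\Psi(B))(G)=B(G)$ and $\Psi(\Phi(M))(\ptg)=M(\ptg)$, together with a straightforward naturality check.
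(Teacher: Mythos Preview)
First, note that this theorem is not proved in the present paper: it is quoted from \cite{N_BisetMackey} in the review section, so there is no proof here to compare against.  Your overall strategy---build a span $\ptg\leftarrow\frac{U}{H\times G^{\mathrm{op}}}\to\pth$ from each biset and use the pull--push formalism---is indeed the standard one and is presumably what \cite{N_BisetMackey} does.

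There is, however, a real gap in your argument for the functoriality of $\Phi(M)$.  You assert that the span attached to $V\times_HU$ \emph{is} the 2-fibered product over $\pth$ of the spans attached to $U$ and $V$, and then invoke only the Mackey condition.  But count groups: the 2-fibered product of
\[
\frac{U}{H\times G^{\mathrm{op}}}\longrightarrow\pth\longleftarrow\frac{V}{K\times H^{\mathrm{op}}}
\]
is a 0-cell whose acting group is (up to equivalence) $(H\times G^{\mathrm{op}})\times(K\times H^{\mathrm{op}})$, whereas the span attached to the composed biset $V\times_HU$ has acting group $K\times G^{\mathrm{op}}$.  The ``quotient by $H$'' that you mention is \emph{not} produced by the 2-pullback; rather, there is a comparison 1-cell from the 2-fibered product to the span of $V\times_HU$, and this 1-cell is stab-surjective (Definition~\ref{DefStabsurj}).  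Consequently the equality $\Phi(M)(V\times_HU)=\Phi(M)(V)\circ\Phi(M)(U)$ requires \emph{both} the Mackey condition and the deflativity of $M$.  This is precisely why the equivalence is with $\MackdSR$ and not with all of $\MackSR$; a non-deflative $M$ (e.g.\ the bigger Burnside functor $\Obig$) would already fail to produce a functor on $\Bcal_R$ by your recipe.  In your write-up, deflativity appears only on the $\Psi$ side; you must also invoke it here.
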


\section{Functor $\MackS\to\BisetFtr$.}

\begin{dfn}\label{DefMtObj}
For any $R$-linear Mackey functor $M$ on $\Sbb$ and any 0-cell $\xg\in\Sbb^0$, we define $J_M(\xg)$ to be the $R$-submodule of $M(\xg)$ generated by the subset
\begin{equation}\label{Eq_elemJ}
\Set{ M\sas(\afr\ppr)M\uas(s)(a)-M\sas(\afr)(a)\, |\, %
\xy
(-10,6)*+{\akp}="0";
(10,6)*+{\ak}="2";
(0,-8)*+{\xg}="4";
{\ar^{s} "0";"2"};
{\ar_{\afr\ppr} "0";"4"};
{\ar^{\afr} "2";"4"};
{\ar@{=>} (3,3);(-2,-1)};
\endxy,
\ \ \begin{array}{l}
a\in M(\ak),\\
s \text{: stab-surjective}
\end{array}
}.
\end{equation}
Denote the quotient module by
\[ \wt{M}(\xg)=M(\xg)/J_M(\xg), \]
and denote the residue homomorphism by
\[ q^{(M)}_{\xg}\co M(\xg)\to\wt{M}(\xg). \]
\end{dfn}

\begin{prop}\label{PropMtMorph}
Let $M$ be an $R$-linear Mackey functor on $\Sbb$. For any 1-cell $\al\co\xg\to\yh$ in $\Sbb$, the homomorphisms $M\uas(\al)$ and $M\sas(\al)$ induce homomorphisms
\[ \wt{M}\uas(\al)\co\wt{M}(\yh)\to\wt{M}(\xg)\quad \text{and}\quad %
\wt{M}\sas(\al)\co\wt{M}(\xg)\to\wt{M}(\yh), \]
which make the following diagrams commutative.
\[
\xy
(-11,7)*+{M(\yh)}="0";
(11,7)*+{M(\xg)}="2";
(-11,-7)*+{\wt{M}(\yh)}="4";
(11,-7)*+{\wt{M}(\xg)}="6";
{\ar^{M\uas(\al)} "0";"2"};
{\ar_{q^{(M)}_{\yh}} "0";"4"};
{\ar^{q^{(M)}_{\xg}} "2";"6"};
{\ar_{\wt{M}\uas(\al)} "4";"6"};
{\ar@{}|\circlearrowright "0";"6"};
\endxy\quad,\quad
\xy
(-11,7)*+{M(\xg)}="0";
(11,7)*+{M(\yh)}="2";
(-11,-7)*+{\wt{M}(\xg)}="4";
(11,-7)*+{\wt{M}(\yh)}="6";
{\ar^{M\sas(\al)} "0";"2"};
{\ar_{q^{(M)}_{\xg}} "0";"4"};
{\ar^{q^{(M)}_{\yh}} "2";"6"};
{\ar_{\wt{M}\sas(\al)} "4";"6"};
{\ar@{}|\circlearrowright "0";"6"};
\endxy
\]
Remark that $\wt{M}\uas(\al)$ and $\wt{M}\sas(\al)$ are uniquely determined by this commutativity, since $q^{(M)}_{\xg}$ and $q^{(M)}_{\yh}$ are surjective.
\end{prop}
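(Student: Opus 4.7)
The plan is to verify that $J_M$ is stable under both the contravariant and covariant parts of $M$. Since the quotient maps $q^{(M)}_{\xg}$ and $q^{(M)}_{\yh}$ are surjective, any induced map fitting the displayed squares is automatically unique, so only existence needs attention. Existence reduces to the two containments $M\sas(\al)(J_M(\xg)) \subseteq J_M(\yh)$ and $M\uas(\al)(J_M(\yh)) \subseteq J_M(\xg)$, which I would check on the generators (\ref{Eq_elemJ}).

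For the covariant direction, the situation is immediate. Starting from a generator $M\sas(\afr\ppr)M\uas(s)(a) - M\sas(\afr)(a) \in J_M(\xg)$ coming from a triangle $\akp \ov{s}{\to} \ak$ over $\xg$ with $s$ stab-surjective, I would apply $M\sas(\al)$ and use functoriality to rewrite the result as $M\sas(\al\ci\afr\ppr)M\uas(s)(a) - M\sas(\al\ci\afr)(a)$. This is a generator of $J_M(\yh)$ associated to the triangle $\akp \ov{s}{\to} \ak$ over $\yh$, filled by horizontally composing $\id_{\al}$ with the original 2-cell $\afr\ci s \tc \afr\ppr$.

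The contravariant direction is where the real content lies, and I would handle it by two applications of the Mackey condition. Take a generator $M\sas(\afr\ppr)M\uas(s)(a) - M\sas(\afr)(a) \in J_M(\yh)$ with $\afr\co\ak\to\yh$, $\afr\ppr\co\akp\to\yh$, stab-surjective $s\co\akp\to\ak$, and 2-cell $\afr\ci s \tc \afr\ppr$. First form the 2-fibered product $\bl$ of $\afr$ along $\al$ (Proposition \ref{Prop2Pullback}), with projections $\gamma\co \bl \to \xg$ and $\delta\co \bl \to \ak$; setting $b = M\uas(\delta)(a)$, the Mackey condition gives $M\uas(\al)M\sas(\afr)(a) = M\sas(\gamma)(b)$. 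Next form the 2-fibered product $\blp$ of $s$ along $\delta$, with projections $s\ppr\co \blp \to \bl$ and $t\ppr\co \blp \to \akp$. By the pasting law for 2-fibered products, together with the equivalence $\afr\ci s \simeq \afr\ppr$, the outer rectangle $\blp \to \akp \to \yh$ is a 2-pullback of $\afr\ppr$ along $\al$, and a second Mackey-condition computation yields
\[ M\uas(\al)M\sas(\afr\ppr)M\uas(s)(a) = M\sas(\gamma\ci s\ppr)M\uas(s\ppr)(b). \]
Hence the image of the starting generator equals $M\sas(\gamma\ci s\ppr)M\uas(s\ppr)(b) - M\sas(\gamma)(b)$, which is a generator of $J_M(\xg)$ attached to the triangle $\blp \ov{s\ppr}{\to} \bl$ over $\xg$.

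The one step where a nontrivial input is needed is the assertion that the pulled-back morphism $s\ppr$ is again stab-surjective; this is supplied by Proposition \ref{PropStabsurj}(4). With that in hand, the contravariant case becomes a matter of bookkeeping with the pasting law for 2-pullbacks and with the 2-functoriality of $M$.
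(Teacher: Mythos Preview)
Your proof is correct and follows essentially the same strategy as the paper: reduce to showing $J_M$ is preserved by $M\sas(\al)$ and $M\uas(\al)$, handle the covariant case by composing the triangle with $\al$, and handle the contravariant case via 2-fibered products and Proposition~\ref{PropStabsurj}(4). The only cosmetic difference is the order of construction in the contravariant step: the paper pulls back both $\afr$ and $\afr\ppr$ along $\al$ separately and then produces the induced 1-cell between the pullbacks (noting that the resulting square is itself a 2-fibered product), whereas you pull back $\afr$ first and then pull back $s$, invoking the pasting law to identify the composite as the pullback of $\afr\ppr$; these are two formulations of the same pasting property.
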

\begin{proof}
To show the existence of $\wt{M}\sas(\al)$, it suffices to show $M\sas(\al)(J_M(\xg))\subseteq J_M(\yh)$.
Take any generator $\xi$ of $J_M(\xg)$ of the form
\[ \xi=M\sas(\afr\ppr)M\uas(s)(a)-M\sas(\afr)(a) \]
as in $(\ref{Eq_elemJ})$. 
Then $M\sas(\al)(\xi)\in J_M(\yh)$ follows from
\[ M\sas(\al)(\xi)=M\sas(\al\ci\afr\ppr)M\uas(s)(a)-M\sas(\al\ci\afr)(a)\in J_M(\yh),\]
since there is a diagram in $\Sbb$ as follows.
\[
\xy
(-10,6)*+{\akp}="0";
(10,6)*+{\ak}="2";
(0,-8)*+{\xg}="4";
{\ar^{s} "0";"2"};
{\ar_{\al\ci\afr\ppr} "0";"4"};
{\ar^{\al\ci\afr} "2";"4"};
{\ar@{=>} (3,3);(-2,-1)};
\endxy
\]

The existence of $\wt{M}\uas(\al)$ can be shown in a similar way. Remark that for any diagram
\[
\xy
(-10,6)*+{\blp}="0";
(10,6)*+{\bl}="2";
(0,-8)*+{\yh}="4";
{\ar^{t} "0";"2"};
{\ar_{\bfr\ppr} "0";"4"};
{\ar^{\bfr} "2";"4"};
{\ar@{=>} (3,3);(-2,-1)};
\endxy
\]
in $\Sbb$, if we take 2-fibered products
\[
\xy
(-9,6)*+{\ak}="0";
(9,6)*+{\bl}="2";
(-9,-6)*+{\xg}="4";
(9,-6)*+{\yh}="6";
{\ar^{\gamma} "0";"2"};
{\ar_{\afr} "0";"4"};
{\ar^{\bfr} "2";"6"};
{\ar_{\al} "4";"6"};
{\ar@{=>} (-2,0);(2,0)};
\endxy
\quad \text{and}\quad
\xy
(-9,6)*+{\akp}="0";
(9,6)*+{\blp}="2";
(-9,-6)*+{\xg}="4";
(9,-6)*+{\yh}="6";
{\ar^{\gamma\ppr} "0";"2"};
{\ar_{\afr\ppr} "0";"4"};
{\ar^{\bfr\ppr} "2";"6"};
{\ar_{\al} "4";"6"};
{\ar@{=>} (-2,0);(2,0)};
\endxy,
\]
then by the universality there exist a 1-cell $s\co\ak\to\akp$ and 2-cells which fit into the following diagrams.
\begin{equation}\label{Diag_2Pulls}
\xy
(-10,6)*+{\akp}="0";
(10,6)*+{\ak}="2";
(0,-8)*+{\xg}="4";
{\ar^{s} "0";"2"};
{\ar_{\afr\ppr} "0";"4"};
{\ar^{\afr} "2";"4"};
{\ar@{=>} (3,3);(-2,-1)};
\endxy
\quad ,\quad 
\xy
(-9,7)*+{\akp}="0";
(9,7)*+{\blp}="2";
(-9,-7)*+{\ak}="4";
(9,-7)*+{\bl}="6";
{\ar^{\gamma\ppr} "0";"2"};
{\ar_{s} "0";"4"};
{\ar^{t} "2";"6"};
{\ar_{\gamma} "4";"6"};
{\ar@{=>} (-2,0);(2,0)};
\endxy.
\end{equation}
Moreover, the right diagram in $(\ref{Diag_2Pulls})$ becomes a 2-fibered product.

Since stab-surjectivity is stable under 2-pullbacks, $s$ becomes stab-surjective when $t$ is so. In this case, for any element $b\in M(\bl)$, we have
\begin{eqnarray*}
M\uas(\al)\!\!\!\!\!\!\!\! &&\!\!\!\!\!\!\!\! (M\sas(\bfr\ppr)M\uas(t)(b)-M\sas(\bfr)(b))\\
&=& M\sas(\afr\ppr)M\uas(\gamma\ppr)M\uas(t)(b)-M\sas(\afr)M\uas(\gamma)(b)\\
&=& M\sas(\afr\ppr)M\uas(s)\big(M\uas(\gamma)(b)\big)-M\sas(\afr)\big(M\uas(\gamma)(b)\big)\ \in\ J_M(\xg),
\end{eqnarray*}
which means $M\uas(\al)(J_M(\yh))\subseteq J_M(\xg)$.
\end{proof}

\begin{cor}\label{CorMtMack}
Let $M$ be any $R$-linear Mackey functor on $\Sbb$. With the structure maps obtained in Proposition \ref{PropMtMorph}, the pair $\wt{M}=(\wt{M}\uas,\wt{M}\sas)$ becomes an $R$-linear deflative Mackey functor on $\Sbb$, and $q_M=\{ q^{(M)}_{\xg} \}_{\xg\in\Sbb^0}$ forms a morphism $q\co M\to\wt{M}$.
\end{cor}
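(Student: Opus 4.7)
The plan is to verify, in order, (i) 2-functoriality of $\wt{M}\uas$ and $\wt{M}\sas$ together with the morphism property of $q_M$, (ii) additivity of $\wt{M}$, (iii) the Mackey condition for $\wt{M}$, and (iv) deflativity. Throughout, the crucial input is Proposition~\ref{PropMtMorph}, which provides the commutative squares relating the structure maps of $M$ and $\wt{M}$ through the surjective quotient maps $q^{(M)}_{\xg}$.

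For (i), since the $q^{(M)}_{\xg}$ are surjective, every identity satisfied by $M\uas$ and $M\sas$ (preservation of compositions, identities, and triviality on 2-cells, coming from 2-functoriality of $M$) transports uniquely to the quotients, so $\wt{M}\uas$ and $\wt{M}\sas$ become 2-functors $\Sbb\to\RMod$. The compatibility of $q_M$ with both $\uas$ and $\sas$ parts is the very content of Proposition~\ref{PropMtMorph}, so $q_M$ is a morphism of Mackey functors by construction.

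For (ii), I would show that under the isomorphism $\pi\co M(\xg\am\yh) \ov{\simeq}{\lra} M(\xg)\oplus M(\yh)$ induced by $(M\uas(\ups_{\xg}), M\uas(\ups_{\yh}))$, the submodule $J_M(\xg\am\yh)$ corresponds exactly to $J_M(\xg)\oplus J_M(\yh)$. The inclusion $\pi(J_M(\xg\am\yh)) \subseteq J_M(\xg)\oplus J_M(\yh)$ is immediate from the contravariant half of Proposition~\ref{PropMtMorph} applied to $\ups_{\xg}$ and $\ups_{\yh}$. For the reverse inclusion, $\pi\iv$ is explicitly given by $(j_1, j_2) \mapsto M\sas(\ups_{\xg})(j_1) + M\sas(\ups_{\yh})(j_2)$ (an easy consequence of the Mackey condition for $M$ applied to the 2-pullbacks of the coproduct inclusions), so the covariant half of Proposition~\ref{PropMtMorph} places such an element in $J_M(\xg\am\yh)$ whenever $j_1\in J_M(\xg)$, $j_2\in J_M(\yh)$. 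Combined with $\wt{M}(\emptyset)=0$, this gives additivity. For (iii), surjectivity of $q^{(M)}_{\yh}$ together with the morphism property of $q_M$ reduces the Mackey condition for $\wt{M}$ on any 2-fibered product as in $(\ref{Diag_2FibProd})$ to the Mackey condition already holding for $M$.

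The heart of the corollary is (iv). Given a stab-surjective 1-cell $\al\co\xg\to\yh$, apply the defining generator of $J_M(\yh)$ with $\ak=\yh$, $\akp=\xg$, $s=\al$, $\afr=\id_{\yh}$, $\afr\ppr=\al$, connected by the identity 2-cell $\id_{\yh}\ci\al\tc\al$. Since $s=\al$ is stab-surjective by hypothesis, for every $y\in M(\yh)$ the element
\[ M\sas(\al)M\uas(\al)(y) - M\sas(\id_{\yh})(y) = M\sas(\al)M\uas(\al)(y) - y \]
belongs to $J_M(\yh)$. Quotienting and using surjectivity of $q^{(M)}_{\yh}$ yields $\wt{M}\sas(\al)\ci\wt{M}\uas(\al) = \id_{\wt{M}(\yh)}$, as required. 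The only nontrivial step in the whole argument is the additivity check, specifically the reverse inclusion $J_M(\xg)\oplus J_M(\yh) \subseteq \pi(J_M(\xg\am\yh))$, which genuinely uses the covariant half of Proposition~\ref{PropMtMorph}; every other assertion is a formal consequence of surjectivity of the $q^{(M)}_{\xg}$.
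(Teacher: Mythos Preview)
Your proof is correct and follows the same approach as the paper. The paper disposes of parts (i)--(iii) in a single sentence (``From Proposition~\ref{PropMtMorph}, it immediately follows that $\wt{M}$ is an $R$-linear Mackey functor on $\Sbb$, and $q$ is a morphism''), whereas you spell out the additivity check in particular; your deflativity argument in (iv) is identical to the paper's, with the same choice of generator $(\afr,\afr\ppr,s)=(\id,\al,\al)$.
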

\begin{proof}
From Proposition \ref{PropMtMorph}, it immediately follows that $\wt{M}$ is an $R$-linear Mackey functor on $\Sbb$, and $q$ is a morphism.

We show the deflativity of $\wt{M}$. For any stab-surjective 1-cell $s\co\xgp\to\xg$, if we draw a diagram
\[
\xy
(-10,6)*+{\xgp}="0";
(10,6)*+{\xg}="2";
(0,-8)*+{\xg}="4";
{\ar^{s} "0";"2"};
{\ar_{s} "0";"4"};
{\ar^{\id} "2";"4"};
{\ar@{}|\circlearrowright (3,3);(-3,-1)};
\endxy,
\]
we see that for any $m\in M(\xg)$, the element
\[ M\sas(s)M\uas(s)(m)-m \]
belongs to $J_M(\xg)$. This means that the equation $\wt{M}\sas(s)\wt{M}\uas(s)=\id$ holds for $\wt{M}$.
\end{proof}

\begin{prop}\label{PropMtAdj}
Let $M$ be any $R$-linear Mackey functor on $\Sbb$. The morphism $q_M\co M\to\wt{M}$ induces a bijection 
\[ -\ci q_M\co\MackdSR(\wt{M},N)\ov{\cong}{\lra}\MackSR(M,N) \]
for any deflative $R$-linear Mackey functor $N$ on $\Sbb$.
\end{prop}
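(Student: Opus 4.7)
The plan is a standard universal-property verification. Injectivity of $-\ci q_M$ is immediate from componentwise surjectivity of the quotient map: if $\vp,\vp\ppr\co\wt M\to N$ satisfy $\vp\ci q_M=\vp\ppr\ci q_M$, then on each 0-cell $\xg$ the equality $\vp_{\xg}\ci q^{(M)}_{\xg}=\vp\ppr_{\xg}\ci q^{(M)}_{\xg}$ forces $\vp_{\xg}=\vp\ppr_{\xg}$, since $q^{(M)}_{\xg}$ is surjective.

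For surjectivity, given $\psi\co M\to N$ with $N$ deflative, I would construct $\wt\psi\co\wt M\to N$ componentwise by factoring each $\psi_{\xg}$ through the quotient $q^{(M)}_{\xg}\co M(\xg)\thra\wt M(\xg)$. This reduces to the single assertion $\psi_{\xg}(J_M(\xg))=0$ for every 0-cell $\xg$, and here is where deflativity of $N$ enters. Take a generator
\[ \xi=M\sas(\afr\ppr)M\uas(s)(a)-M\sas(\afr)(a) \]
of $J_M(\xg)$ associated to a stab-surjective $s\co\akp\to\ak$, an element $a\in M(\ak)$, and a 2-cell filling the triangle with legs $\afr\ci s$ and $\afr\ppr$. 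Naturality of $\psi$ with respect to both $M\uas,M\sas$ and $N\uas,N\sas$ carries $\psi_{\xg}(\xi)$ to
\[ N\sas(\afr\ppr)N\uas(s)(n)-N\sas(\afr)(n), \]
where $n=\psi_{\ak}(a)\in N(\ak)$. Because $N\sas$ is a 2-functor valued in the 1-category $\RMod$, the given 2-cell collapses under it and yields $N\sas(\afr\ppr)=N\sas(\afr)\ci N\sas(s)$, so the displayed expression equals $N\sas(\afr)\bigl(N\sas(s)N\uas(s)(n)-n\bigr)$. Deflativity of $N$ applied to the stab-surjective $s$ gives $N\sas(s)\ci N\uas(s)=\id_{N(\ak)}$, and the parenthesised term vanishes; hence $\psi_{\xg}(\xi)=0$.

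The remaining checks are routine. The factorisation $\wt\psi_{\xg}\co \wt M(\xg)\to N(\xg)$ is automatically $R$-linear. To see that $\wt\psi=\{\wt\psi_{\xg}\}$ is a morphism in $\MackdSR$, one must verify compatibility with $\wt M\uas,\wt M\sas$ and $N\uas,N\sas$ for each 1-cell $\al\co\xg\to\yh$; this follows from Proposition \ref{PropMtMorph}, which characterises $\wt M\uas(\al)$ and $\wt M\sas(\al)$ by the commutativity of the squares involving $q_M$, combined with the naturality of the original $\psi$. Postcomposing each candidate identity with the surjective $q^{(M)}_{?}$ reduces equality of maps out of $\wt M$ to the already known naturality of $\psi$ out of $M$. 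The only genuine obstacle is the generator-vanishing computation in the middle paragraph; everything else is formal bookkeeping.
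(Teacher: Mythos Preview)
Your argument is correct and follows essentially the same route as the paper: reduce to the vanishing $\psi_{\xg}(J_M(\xg))=0$ on generators, push $\psi$ through by naturality, and use deflativity of $N$ together with the fact that the 2-cell forces $N\sas(\afr\ppr)=N\sas(\afr\ci s)$ to make the two terms coincide. The paper records only the generator computation and leaves injectivity and the verification that $\wt\psi$ is a morphism of Mackey functors to the reader, but your more explicit bookkeeping is accurate.
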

\begin{proof}
Once the following claim is shown, the rest will follow from the surjectivity of $q^{(M)}_{\xg}$ for any $\xg\in\Sbb^0$.
\begin{claim}\label{ClaimMtAdj}
Let $M,N$ be as above. Then for any $f\in\MackSR(M,N)$ and any $\xg\in\Sbb^0$, the homomorphism $f_{\xg}\co M(\xg)\to N(\xg)$ factors through to yield a homomorphism $\wt{f}_{\xg}\co \wt{M}(\xg)\to N(\xg)$ as in the following diagram.
\[
\xy
(-12,4)*+{M(\xg)}="0";
(12,4)*+{N(\xg)}="2";
(-4,-11)*+{\wt{M}(\xg)}="4";
{\ar^{f_{\xg}} "0";"2"};
{\ar_{q^{(M)}_{\xg}} "0";"4"};
{\ar_{\wt{f}_{\xg}} "4";"2"};
{\ar@{}|\circlearrowright (-3,4);(-1,-8)};
\endxy
\]
\end{claim}


\begin{proof}[Proof of Claim \ref{ClaimMtAdj}]
It suffices to show $f_{\xg}(J_M(\xg))=0$. Take any generator
\[ \xi=M\sas(\afr\ppr)M\uas(s)(a)-M\sas(\afr)(a) \]
as in $(\ref{Eq_elemJ})$. Then by the deflativity of $N$, indeed we obtain
\begin{eqnarray*}
f_{\xg}(\xi)&=&f_{\xg}M\sas(\afr\ppr)M\uas(s)(a)-f_{\xg}M\sas(\afr)(a)\\
&=&N\sas(\afr\ppr)N\uas(s)(f_{\ak}(a))-N\sas(\afr)(f_{\ak}(a))\\
&=&N\sas(\afr\ppr)N\uas(s)(f_{\ak}(a))-N\sas(\afr)N\sas(s)N\uas(s)(f_{\ak}(a))\\
&=&0.
\end{eqnarray*}
\end{proof}
\end{proof}

\begin{cor}\label{CorMtAdj}
The correspondence $M\mapsto\wt{M}$ gives a functor
\[ \wt{(-)}\co\MackSR\to\MackdSR \]
which is left adjoint to the inclusion $\MackdSR\hookrightarrow\MackSR$.

Moreover $q=\{q_M\}_{M\in\Ob(\MackSR)}$ gives a natural transformation
\[ q\co \Id\tc \wt{(-)}, \]
which gives an isomorphism $q_M\co M\ov{\cong}{\lra}\wt{M}$ whenever $M$ is deflative.
\end{cor}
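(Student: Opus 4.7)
The plan is to assemble the corollary from Proposition~\ref{PropMtAdj} plus a small amount of formal bookkeeping. First, to define $\wt{(-)}$ on morphisms: given $f\co M\to N$ in $\MackSR$, consider the composite $q_N\ci f\co M\to\wt{N}$. Since $\wt{N}$ is deflative by Corollary~\ref{CorMtMack}, Proposition~\ref{PropMtAdj} applied with the deflative target $\wt{N}$ produces a unique morphism $\wt{f}\co\wt{M}\to\wt{N}$ satisfying $\wt{f}\ci q_M=q_N\ci f$. Functoriality ($\wt{\id_M}=\id_{\wt{M}}$ and $\wt{g\ci f}=\wt{g}\ci\wt{f}$) follows from the uniqueness of such factorizations, and the defining equation $\wt{f}\ci q_M=q_N\ci f$ is precisely the naturality square for $q\co\Id\tc\wt{(-)}$.

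Next, for the adjunction itself, Proposition~\ref{PropMtAdj} already delivers the bijection $-\ci q_M\co\MackdSR(\wt{M},N)\ov{\cong}{\lra}\MackSR(M,N)$ for every deflative $N$. I only need to verify that this bijection is natural in $M$ and $N$. Naturality in $N$ is immediate because composition on the left is compatible with the identity $\phi\mapsto\phi\ci q_M$. Naturality in $M$ follows from the already-established naturality of $q$: for $f\co M\ppr\to M$ the two ways of sending $\phi\in\MackdSR(\wt{M},N)$ into $\MackSR(M\ppr,N)$ agree because $\phi\ci\wt{f}\ci q_{M\ppr}=\phi\ci q_M\ci f$.

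Finally, for the last assertion that $q_M$ is an isomorphism when $M$ is deflative, it suffices to show $J_M(\xg)=0$. Take any generator $\xi=M\sas(\afr\ppr)M\uas(s)(a)-M\sas(\afr)(a)$ as in~(\ref{Eq_elemJ}); by hypothesis the triangle carries a 2-cell $\afr\ci s\tc\afr\ppr$, so $\afr\ci s$ and $\afr\ppr$ have equal image in $\Csc$, whence $M\sas(\afr\ppr)=M\sas(\afr)\ci M\sas(s)$. Combining this with the deflativity identity $M\sas(s)\ci M\uas(s)=\id$ yields
\[ \xi=M\sas(\afr)M\sas(s)M\uas(s)(a)-M\sas(\afr)(a)=0, \]
so $q^{(M)}_{\xg}$ is an isomorphism on each 0-cell, and therefore $q_M$ is an isomorphism of Mackey functors.

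I do not expect a real obstacle here: everything reduces either to the universal property just proved in Proposition~\ref{PropMtAdj} or to the observation that stab-surjective $s$ appearing in the defining relations become split-epic after applying $M\sas$ and $M\uas$ when $M$ is deflative. The only point deserving slight care is confirming that the factored morphisms $\wt{f}$ patch into an honest morphism of Mackey functors (i.e.\ commute with both $\wt{M}\uas$ and $\wt{M}\sas$), but this is forced by the surjectivity of the components of $q_M$ together with the fact that $q_M$ and $q_N$ are themselves morphisms of Mackey functors.
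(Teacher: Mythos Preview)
Your proof is correct and follows essentially the same approach as the paper: define $\wt{f}$ via the universal property of $q_M$ from Proposition~\ref{PropMtAdj}, deduce functoriality and the adjunction from uniqueness, and treat the final statement as straightforward. The paper's own proof is terser (it declares the last part ``trivial'' without writing out the computation $J_M(\xg)=0$), so your explicit verification that each generator vanishes when $M$ is deflative is a welcome expansion rather than a different route.
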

\begin{proof}
As in the proof of Proposition \ref{PropMtAdj}, for any morphism $f\co M\to M\ppr$ in $\MackSR$, there is a unique morphism $\wt{f}\co\wt{M}\to\wt{M}\ppr$ in $\MackdSR$ which makes the following diagram commutative.
\[
\xy
(-7,6)*+{M}="0";
(7,6)*+{M\ppr}="2";
(-7,-6)*+{\wt{M}}="4";
(7,-6)*+{\wt{M}\ppr}="6";
{\ar^{f} "0";"2"};
{\ar_{q_M} "0";"4"};
{\ar^{q_{M\ppr}} "2";"6"};
{\ar_{\wt{f}} "4";"6"};
{\ar@{}|\circlearrowright "0";"6"};
\endxy
\]
By the uniqueness, it can be easily shown that this correspondence gives a functor $\wt{(-)}\co\MackSR\to\MackdSR$. Adjointness also follows from Proposition \ref{PropMtAdj}. The latter part is trivial.
\end{proof}

\begin{ex}
For the bigger Burnside functor $\Obig\in\Ob(\MackS)$, we have $\wt{\Obig}\cong\Omega$, where $\Omega$ denotes the ordinary Burnside functor.
\end{ex}


\section{Analog of Jacobson's $F$-Burnside construction.}

\begin{dfn}\label{DefSaddC}
Define category $\AddCR$ (resp. $\SaddC$) as follows.
\begin{itemize}
\item[-] An object in $\AddCR$ (resp. $\SaddC$) is a contravariant functor
\[ F\co\Csc\to\RMod \quad (\text{resp.}\ \  E\co\Csc\to\Sett) \]
which sends coproducts in $\Csc$ to products in $\RMod$ (resp. $\Sett$).
\item[-] A morphism is a natural transformation.
\end{itemize}
\end{dfn}


\begin{dfn}\label{DefFS2cat}
Let $E$ be any object in $\SaddC$. For any $\xg\in\Sbb^0$, define a 2-category $\ESxg$ as follows.
\begin{enumerate}
\item[{\rm (0)}] A 0-cell is a pair $(\akaxg,\xi)$ of a 0-cell $(\akaxg)$ in $\Sxg$ and an element $\xi\in E(\ak)$.
\item[{\rm (1)}] A 1-cell $(\vp,\mu)\co(\akaxg,\xi)\to(\akaxgp,\xi\ppr)$ is a 1-cell $(\vp,\mu)$ in $\Sxg$, which satisfies
\[ E(\vp)(\xi\ppr)=\xi. \]
\item[{\rm (2)}] A 2-cell $\ep\co(\vp,\mu)\tc(\vp\ppr,\mu\ppr)$ is a 2-cell $\ep$ in $\Sxg$.
\end{enumerate}
Compositions are induced from those in $\Sxg$.
\end{dfn}


\begin{dfn}\label{DefAEachGX}
Let $E,\xg$ and $\ESxg$ be as in Definition \ref{DefFS2cat}. We denote the set of adjoint equivalence classes of 0-cells in $\ESxg$ by $\Afr[E](\xg)$. For each 0-cell $(\akaxg,\xi)\in(\ESxg)^0$, we denote its adjoint equivalence class by
\[ [\akaxg,\xi]\in\Afr [E](\xg). \]

For each pair of elements $[\akaxg,\xi]$ and $[\akaxgp,\xi\ppr]$, we define their {\it sum} by
\begin{equation}\label{Eq_Sum_axi}
[\akaxg,\xi]+[\akaxgp,\xi\ppr]=[\ak\am\akp\ov{\afr\cup\afr\ppr}{\lra}\xg,\xi\sika\xi\ppr],
\end{equation}
where
\begin{itemize}
\item[-] $\ak\am\akp\ov{\afr\cup\afr\ppr}{\lra}\xg$ is the 2-coproduct in $\Sxg$,
\item[-] $\xi\sika\xi\ppr$ is the image of $(\xi,\xi\ppr)\in E(\ak)\times E(\akp)$ under the natural isomorphism
\[ E(\ak)\times E(\akp)\ov{\cong}{\lra}E(\ak\am\akp). \]
\end{itemize}
\end{dfn}

\begin{rem}\label{RemAEachGX}
The definition of the sum $(\ref{Eq_Sum_axi})$ does not depend on the choice of representatives.
\end{rem}


\begin{dfn}\label{DefAEachalpha}
Let $E$ be any object in $\SaddC$. For any 1-cell $\al\co\xg\to\yh$ in $\Sbb$, we define homomorphisms
\begin{eqnarray*}
&\Afr [E]\sas(\al)\co \Afr [E](\xg)\to \Afr [E](\yh),&\\
&\Afr [E]\uas(\al)\co \Afr [E](\yh)\to \Afr [E](\xg)\,&
\end{eqnarray*}
by the following.
\begin{enumerate}
\item $\Afr [E]\sas(\al)([\akaxg,\xi])=[\ak\ov{\al\ci\afr}{\lra}\yh,\xi]$ for any $[\akaxg,\xi]\in\Afr [E](\xg)$.
\item $\Afr [E]\uas(\al)([\blbyh,\eta])=[\akaxg,E(\gamma)(\eta)]$ for any $[\blbyh,\eta]\in\Afr [E](\yh)$, where
\[
\xy
(-8,6)*+{\ak}="0";
(8,6)*+{\bl}="2";
(-8,-6)*+{\xg}="4";
(8,-6)*+{\yh}="6";
{\ar^{\gamma} "0";"2"};
{\ar_{\afr} "0";"4"};
{\ar^{\bfr} "2";"6"};
{\ar_{\al} "4";"6"};
{\ar@{=>} (-2,0);(2,0)};
\endxy
\]
is a 2-fibered product in $\Sbb$.
\end{enumerate}
\end{dfn}


\begin{rem}
In Definition \ref{DefAEachalpha}, those $\Afr [E]\sas(\al)$ and $\Afr [E]\uas(\al)$ are indeed well-defined monoid homomorphisms. Moreover, if 1-cells $\al,\al\ppr\co\xg\to\yh$ satisfy $\und{\al}=\und{\al}\ppr$ in $\Csc$, then we have
\[ \Afr [E]\sas(\al)=\Afr [E]\sas(\al\ppr)\ \ \text{and}\ \ \Afr [E]\uas(\al)=\Afr [E]\uas(\al\ppr). \]
These correspondences form a contravariant functor
\[ \Afr [E]\sas\co\Csc\to\RMod \]
and a covariant functor
\[ \Afr [E]\uas\co\Csc\to\RMod. \]
\end{rem}
\begin{proof}
This follows from the universality of 2-coproducts and 2-fibered products in $\Sbb$.
\end{proof}


\begin{prop}\label{PropAFEachF}
For any $E\in\Ob(\SaddC)$, the pair
\[ \Afr [E]=(\Afr [E]\uas,\Afr [E]\sas) \]
becomes a semi-Mackey functor on $\Sbb$.
\end{prop}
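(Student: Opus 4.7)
The plan is to verify the three conditions (0), (1), (2) of Definition \ref{DefSemiMackS} in turn. Condition (0) holds tautologically from the construction, since $\Afr[E]^{\ast}(\xg)$ and $\Afr[E]_{\ast}(\xg)$ are both defined to be the set $\Afr[E](\xg)$ of adjoint equivalence classes in $(\ESxg)^0$. The functoriality of $\Afr[E]^{\ast}$ and $\Afr[E]_{\ast}$ on $\Csc$ is granted by the preceding remark, so the only genuine content is additivity and the Mackey condition.

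For additivity, given a 2-coproduct $\xg \ov{\ups_{\xg}}{\to} \xg \am \yh \ov{\ups_{\yh}}{\la} \yh$, I would show that the sum defined by $(\ref{Eq_Sum_axi})$ is the inverse of $(\Afr[E]^{\ast}(\ups_{\xg}),\Afr[E]^{\ast}(\ups_{\yh}))$. One direction is essentially built into the definition: the 2-pullback of $\ak \am \akp \ov{\afr\cup\afr\ppr}{\to} \xg \am \yh$ along $\ups_{\xg}$ (respectively $\ups_{\yh}$) is adjoint-equivalent to $\ak \to \xg$ (respectively $\akp \to \xg$), and under the isomorphism $E(\ak \am \akp) \cong E(\ak)\times E(\akp)$ provided by $E \in \Ob(\SaddC)$, the class $\xi \sika \xi\ppr$ restricts to $\xi$ and $\xi\ppr$. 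For the converse, any 0-cell $\blbxg$ over $\xg \am \yh$ may be split by 2-pullback along the two injections, yielding $\bl \simeq \bl_\xg \am \bl_\yh$ over $\xg \am \yh$ (using Proposition \ref{Prop2Pullback} and Proposition \ref{Prop2CoprodVari}), and the $E$-label correspondingly decomposes. The degenerate case $\Afr[E](\emptyset)$ being a singleton follows since $E$ sends the empty coproduct to the empty product, a singleton.

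For the Mackey condition on a 2-fibered product
\[
\xy
(-8,6)*+{\wl}="0";(8,6)*+{\yh}="2";(-8,-6)*+{\xg}="4";(8,-6)*+{\zk}="6";
{\ar^{\delta} "0";"2"};{\ar_{\gamma} "0";"4"};{\ar^{\be} "2";"6"};{\ar_{\al} "4";"6"};
{\ar@{=>} (-2,0);(2,0)};
\endxy,
\]
I would chase a representative $[\blbyh,\eta]$ around both routes. Applying $\Afr[E]_{\ast}(\be)$ then $\Afr[E]^{\ast}(\al)$ yields $[\bl \times_{\zk}\xg \to \xg, E(\pi)(\eta)]$, where $\pi$ is the projection to $\bl$ in the 2-fibered product of $\be \circ \bfr$ along $\al$. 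Applying $\Afr[E]^{\ast}(\delta)$ then $\Afr[E]_{\ast}(\gamma)$ yields $[\bl\times_{\yh}\wl \to \xg, E(\pi\ppr)(\eta)]$, with projection $\pi\ppr$. The pasting of the two 2-fibered product squares (the given square and the square over $\bfr, \delta$) gives a 2-fibered product of $\be\circ\bfr$ along $\al$, so by the uniqueness up to adjoint equivalence in Proposition \ref{Prop2Pullback} these two 0-cells over $\xg$ are adjoint-equivalent, and under the equivalence the respective projections to $\bl$ are canonically identified, ensuring the $E$-labels agree.

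The routine but delicate part will be the last step: 2-fibered products are characterized only up to adjoint equivalence and involve nontrivial 2-cells, so the pasting argument must be carried out with the universal property (Definition \ref{Def2Pullback}) rather than strict equalities, and one must verify that the canonical comparison 1-cell is indeed an adjoint equivalence over $\xg$ respecting the projection to $\bl$ up to a 2-cell. Since adjoint-equivalent 0-cells define the same class in $\Afr[E](\xg)$, and $E$ is invariant under 2-cells as a functor on $\Csc$, this suffices; however, extracting the claim cleanly is the main bookkeeping obstacle of the proof.
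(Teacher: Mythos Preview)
Your proposal is correct and follows essentially the same strategy as the paper: additivity via the explicit inverse furnished by the sum $(\ref{Eq_Sum_axi})$, and the Mackey condition via the pasting property of 2-fibered products. The only cosmetic difference is that the paper, when computing $\Afr[E]^{\ast}(\al)\circ\Afr[E]_{\ast}(\be)$, exploits the freedom in the choice of 2-fibered product to take the pasted square directly, so that the two routes agree on the nose rather than up to an adjoint equivalence that must then be invoked; your version with two generic 2-fibered products compared via the pasting lemma is equivalent but slightly more laborious.
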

\begin{proof}
\noindent {\bf [Additivity]}

Let $\xg\ov{\ups_{\xg}}{\lra}\xg\am\yh\ov{\ups_{\yh}}{\lla}\yh$ be a 2-coproduct in $\Sbb$. Then
\[ \big(\Afr [E]\uas(\ups_{\xg}),\Afr [E]\uas(\ups_{\yh})\big)\co \Afr [E](\xg\am\yh)\to \Afr [E](\xg)\times \Afr [E](\yh) \]
becomes an isomorphism. In fact,
\[
\xy
(-24,4)*+{\Afr [E](\xg)\times \Afr [E](\yh)}="0";
(24,4)*+{\Afr [E](\xg\am\yh)}="2";
(-24,0)*+{\rotatebox{90}{\text{$\in$}}}="10";
(24,0)*+{\rotatebox{90}{\text{$\in$}}}="12";
(-24,-4)*+{([\akaxg,\xi],[\blbyh,\eta])}="20";
(27,-4)*+{[\ak\am\bl\ov{\afr\am\bfr}{\lra}\xg\am\yh,\xi\sika\eta]}="22";
{\ar "0";"2"};
{\ar@{|->} "20";"22"};
\endxy
\]
gives the inverse.

\bigskip

\noindent {\bf [Mackey condition]}

Let
\[
\xy
(-8,6)*+{\yh}="0";
(8,6)*+{\xg}="2";
(-8,-6)*+{\yhp}="4";
(8,-6)*+{\xgp}="6";
{\ar^{\al} "0";"2"};
{\ar_{\lam} "0";"4"};
{\ar^{\rho} "2";"6"};
{\ar_{\al\ppr} "4";"6"};
{\ar@{=>} (-2,0);(2,0)};
\endxy
\]
be any 2-fibered product in $\Sbb$. For any 1-cell $\akaxg$ in $\Sbb$, if we take a 2-fibered product
\[
\xy
(-8,6)*+{\bl}="0";
(8,6)*+{\ak}="2";
(-8,-6)*+{\yh}="4";
(8,-6)*+{\xg}="6";
{\ar^{\gamma} "0";"2"};
{\ar_{\bfr} "0";"4"};
{\ar^{\afr} "2";"6"};
{\ar_{\al} "4";"6"};
{\ar@{=>} (-2,0);(2,0)};
\endxy
\]
in $\Sbb$, then
\[
\xy
(-8,6)*+{\bl}="0";
(8,6)*+{\ak}="2";
(-8,-6)*+{\yhp}="4";
(8,-6)*+{\xgp}="6";
{\ar^{\gamma} "0";"2"};
{\ar_{\lam\ci\bfr} "0";"4"};
{\ar^{\rho\ci\afr} "2";"6"};
{\ar_{\al\ppr} "4";"6"};
{\ar@{=>} (-2,0);(2,0)};
\endxy
\]
also becomes a 2-fibered product, and thus for any $[\akaxg,\xi]\in\Afr [E](\xg)$, we have
\begin{eqnarray*}
\Afr [E]\uas(\al\ppr)\ci\Afr [E]\sas(\rho)([\akaxg,\xi])%
&=&\Afr [E]\uas(\al\ppr)([\ak\ov{\rho\ci\afr}{\lra}\xgp,\xi])\\
&=&[\bl\ov{\lam\ci\bfr}{\lra}\yhp,E(\gamma)(\xi)]\\
&=&\Afr [E]\sas(\lam)([\blbyh,E(\gamma)(\xi)])\\
&=&\Afr [E]\sas(\lam)\ci\Afr [E]\uas(\al)([\akaxg,\xi]).
\end{eqnarray*}
\end{proof}


\begin{prop}\label{PropAFunct}
The correspondence
\[ \Afr [-]\co \SaddC\to\SMackS\ ; \ E\mapsto \Afr [E] \]
forms a functor.
\end{prop}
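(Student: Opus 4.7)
The plan is to define $\Afr[-]$ on morphisms in the most natural way, and then verify it preserves the Mackey structure. Given a morphism (natural transformation) $\varphi\colon E\to E\ppr$ in $\SaddC$, I would define, for each $\xg\in\Sbb^0$, a map
\[ \Afr[\varphi]_{\xg}\co \Afr[E](\xg)\to\Afr[E\ppr](\xg),\quad [\akaxg,\xi]\mapsto [\akaxg,\varphi_{\ak}(\xi)]. \]

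First I would check that this map is well-defined on adjoint equivalence classes. If $(\vp,\mu)\co(\akaxg,\xi)\to(\akaxgp,\xi\ppr)$ is an adjoint equivalence in $\ESxg$, then by definition $E(\vp)(\xi\ppr)=\xi$; applying $\varphi_{\ak}$ and using the naturality square for $\varphi$ at the 1-cell $\vp$ yields $E\ppr(\vp)(\varphi_{\akp}(\xi\ppr))=\varphi_{\ak}(\xi)$, so $(\vp,\mu)$ also gives an adjoint equivalence $(\akaxg,\varphi_{\ak}(\xi))\to(\akaxgp,\varphi_{\akp}(\xi\ppr))$ in $E\ppr\text{-}\Sxg$. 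Compatibility with the sum operation $(\ref{Eq_Sum_axi})$ follows because $\varphi$ is compatible with the isomorphisms $E(\ak)\times E(\akp)\cong E(\ak\am\akp)$ and similarly for $E\ppr$ (both induced by $E(\ups_{\ak}),E(\ups_{\akp})$, to which naturality applies). Hence $\Afr[\varphi]_{\xg}$ is a monoid homomorphism.

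Next I would verify that $\Afr[\varphi]=\{\Afr[\varphi]_{\xg}\}_{\xg\in\Sbb^0}$ defines a morphism of semi-Mackey functors, i.e.\ it is natural with respect to both $\Afr[-]\sas(\al)$ and $\Afr[-]\uas(\al)$ for every 1-cell $\al\co\xg\to\yh$ in $\Sbb$. Compatibility with the covariant part is immediate from the formula in Definition \ref{DefAEachalpha}(1), since it only post-composes the 1-cell $\afr$ with $\al$ and leaves the element $\xi$ untouched. Compatibility with the contravariant part requires a short computation using a 2-fibered product
\[
\xy
(-8,6)*+{\ak}="0";
(8,6)*+{\bl}="2";
(-8,-6)*+{\xg}="4";
(8,-6)*+{\yh}="6";
{\ar^{\gamma} "0";"2"};
{\ar_{\afr} "0";"4"};
{\ar^{\bfr} "2";"6"};
{\ar_{\al} "4";"6"};
{\ar@{=>} (-2,0);(2,0)};
\endxy
\]
together with the naturality square of $\varphi$ at $\gamma$, which gives $\varphi_{\ak}(E(\gamma)(\eta))=E\ppr(\gamma)(\varphi_{\bl}(\eta))$. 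This is the only real verification; the potential obstacle is ensuring that the 2-fibered product chosen is the same on both sides, but the definition of $\Afr[-]\uas(\al)$ depends on the chosen 2-pullback only up to adjoint equivalence, so the identity holds on equivalence classes.

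Finally, functoriality of $\Afr[-]$ is formal: the identity natural transformation $\id_E$ induces the identity on each $\Afr[E](\xg)$, and for a composable pair $E\ov{\varphi}{\to}E\ppr\ov{\psi}{\to}E\pprr$ the equality $\psi_{\ak}\ci\varphi_{\ak}=(\psi\ci\varphi)_{\ak}$ at each 0-cell $\ak$ gives $\Afr[\psi]\ci\Afr[\varphi]=\Afr[\psi\ci\varphi]$ directly from the defining formula. This completes the construction of the functor $\Afr[-]\co\SaddC\to\SMackS$.
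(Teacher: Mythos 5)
Your proposal is correct and takes the same approach as the paper: define $\Afr[\varphi]_{\xg}$ by $[\akaxg,\xi]\mapsto[\akaxg,\varphi_{\ak}(\xi)]$ and verify naturality with respect to $\Afr[-]\sas$ and $\Afr[-]\uas$, the latter via the naturality of $\varphi$ at the pullback 1-cell $\gamma$. You simply spell out the well-definedness, additivity and functoriality checks that the paper dismisses as ``easily confirmed.''
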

\begin{proof}
If $\tau\co E\to E\ppr$ is a morphism in $\SaddC$, then $\Afr [\tau]\co \Afr [E]\to\Afr [E\ppr]$ is defined by
\[
\xy
(-32.4,3.8)*+{\Afr [\tau]_{\xg}\co}="-2";
(-20,4)*+{\Afr [E](\xg)}="0";
(20,4)*+{\Afr [E\ppr](\xg)}="2";
(-20,0)*+{\rotatebox{90}{\text{$\in$}}}="10";
(20,0)*+{\rotatebox{90}{\text{$\in$}}}="12";
(-20,-4)*+{[\akaxg,\xi]}="20";
(20,-4)*+{[\akaxg,\tau_{\ak}(\xi)]}="22";
{\ar "0";"2"};
{\ar@{|->} "20";"22"};
\endxy
\]
for each $\xg\in\Sbb^0$. Then for any 1-cell $\al\co\xg\to\yh$, the commutativity of
\[
\xy
(-13,7)*+{\Afr [E](\xg)}="0";
(13,7)*+{\Afr [E\ppr](\xg)}="2";
(-13,-7)*+{\Afr [E](\yh)}="4";
(13,-7)*+{\Afr [E\ppr](\yh)}="6";
{\ar^{\Afr [\tau]_{\xg}} "0";"2"};
{\ar_{\Afr [E]\sas(\al)} "0";"4"};
{\ar^{\Afr [E\ppr]\sas(\al)} "2";"6"};
{\ar_{\Afr [\tau]_{\yh}} "4";"6"};
{\ar@{}|\circlearrowright "0";"6"};
\endxy
\quad\text{and}\quad
\xy
(-13,7)*+{\Afr [E](\yh)}="0";
(13,7)*+{\Afr [E\ppr](\yh)}="2";
(-13,-7)*+{\Afr [E](\xg)}="4";
(13,-7)*+{\Afr [E\ppr](\xg)}="6";
{\ar^{\Afr [\tau]_{\yh}} "0";"2"};
{\ar_{\Afr [E]\uas(\al)} "0";"4"};
{\ar^{\Afr [E\ppr]\uas(\al)} "2";"6"};
{\ar_{\Afr [\tau]_{\xg}} "4";"6"};
{\ar@{}|\circlearrowright "0";"6"};
\endxy
\]
is easily confirmed. It is also easy to show that this gives a functor $\Afr [-]\co \SaddC\to\SMackS$.
\end{proof}


\begin{prop}\label{PropAAdj}
The functor $\Afr [-]\co\SaddC\to\SMackS$ obtained in Proposition \ref{PropAFunct} is left adjoint to the forgetful functor
\[ \SMackS\to\SaddC\ ;\ M=(M\uas,M\sas)\mapsto M\uas. \]
\end{prop}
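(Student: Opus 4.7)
The strategy is to exhibit the unit of the adjunction: for each $E\in\Ob(\SaddC)$ and each $\xg\in\Sbb^0$, send $\xi\in E(\xg)$ to $[\xg\ov{\id}{\to}\xg,\xi]\in\Afr[E](\xg)$. This will define a natural transformation $\eta_E\co E\tc\Afr[E]^{\ast}$. Then, given any $\sig\in\SaddC(E,M^{\ast})$, I will build a morphism $\vp\co\Afr[E]\to M$ of semi-Mackey functors by the formula
\[ \vp_{\xg}([\akaxg,\xi])\;=\;M_{\ast}(\afr)\bigl(\sig_{\ak}(\xi)\bigr), \]
and show that $\vp\mapsto\vp\ci\eta_E$ is the inverse to this construction, yielding the bijection $\SMackS(\Afr[E],M)\cong\SaddC(E,M^{\ast})$.

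First I would check that $\vp_{\xg}$ is well-defined on adjoint equivalence classes. If $s\co\ak\to\akp$ is an adjoint equivalence with a 2-cell $\afr\tc\afr\ppr\ci s$ exhibiting $(\akaxg,\xi)\simeq(\akaxgp,\xi\ppr)$ with $E(s)(\xi\ppr)=\xi$, then $2$-functoriality gives $M_{\ast}(\afr)=M_{\ast}(\afr\ppr)\ci M_{\ast}(s)$ and $M_{\ast}(s)\ci M^{\ast}(s)=\id$; combining with the naturality $\sig_{\ak}\ci E(s)=M^{\ast}(s)\ci\sig_{\akp}$ shows $M_{\ast}(\afr)(\sig_{\ak}(\xi))=M_{\ast}(\afr\ppr)(\sig_{\akp}(\xi\ppr))$. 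Additivity of $\vp_{\xg}$ (i.e.\ compatibility with the sum in Definition \ref{DefAEachGX}) then follows from the additivity of $M_{\ast}$ on $2$-coproducts, since $M_{\ast}(\afr\cup\afr\ppr)$ decomposes via $\ups_{\ak},\ups_{\akp}$ and $\sig$ respects the isomorphism $E(\ak\am\akp)\cong E(\ak)\ti E(\akp)$.

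Next I would verify that $\vp$ is a morphism of semi-Mackey functors. The covariant compatibility is immediate from $M_{\ast}(\al)\ci M_{\ast}(\afr)=M_{\ast}(\al\ci\afr)$ together with the defining formula of $\Afr[E]_{\ast}(\al)$. The contravariant compatibility is the key step and is where the Mackey condition on $M$ is used: for a 1-cell $\al\co\xg\to\yh$ and $[\blbyh,\eta]\in\Afr[E](\yh)$, choose a 2-fibered product
\[
\xy
(-9,6)*+{\ak}="0";(9,6)*+{\bl}="2";(-9,-6)*+{\xg}="4";(9,-6)*+{\yh}="6";
{\ar^{\gamma} "0";"2"};{\ar_{\afr} "0";"4"};{\ar^{\bfr} "2";"6"};{\ar_{\al} "4";"6"};{\ar@{=>} (-2,0);(2,0)};
\endxy
\]
in $\Sbb$. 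Then the naturality of $\sig$ gives $\sig_{\ak}(E(\gamma)(\eta))=M^{\ast}(\gamma)(\sig_{\bl}(\eta))$, so
\[ \vp_{\xg}(\Afr[E]^{\ast}(\al)([\blbyh,\eta]))=M_{\ast}(\afr)M^{\ast}(\gamma)(\sig_{\bl}(\eta)), \]
while $M^{\ast}(\al)(\vp_{\yh}([\blbyh,\eta]))=M^{\ast}(\al)M_{\ast}(\bfr)(\sig_{\bl}(\eta))$, and these agree by the Mackey condition on $M$.

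Finally, the two assignments are mutually inverse. Given $\vp$, define $\sig_{\xg}(\xi)=\vp_{\xg}([\xg\ov{\id}{\to}\xg,\xi])$; naturality of $\sig$ follows by applying $\vp$'s contravariant compatibility to the trivial $2$-pullback
\[
\xy
(-9,6)*+{\xg}="0";(9,6)*+{\yh}="2";(-9,-6)*+{\xg}="4";(9,-6)*+{\yh}="6";
{\ar^{\al} "0";"2"};{\ar_{\id} "0";"4"};{\ar^{\id} "2";"6"};{\ar_{\al} "4";"6"};{\ar@{}|\circlearrowright "0";"6"};
\endxy
\]
Starting from $\sig$ and running both constructions recovers $\sig$ since $M_{\ast}(\id)=\id$; starting from $\vp$, the identity $\vp_{\xg}([\akaxg,\xi])=M_{\ast}(\afr)(\vp_{\ak}([\ak\ov{\id}{\to}\ak,\xi]))$ follows from the covariant compatibility of $\vp$ applied to $\afr$, and gives back $\vp$. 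Naturality of the bijection in $E$ and $M$ is then a routine check, completing the adjunction. The main obstacle is the contravariant compatibility step, where the Mackey condition must be invoked on the $2$-fibered product defining $\Afr[E]^{\ast}(\al)$.
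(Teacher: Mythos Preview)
Your proposal is correct and follows essentially the same approach as the paper: you construct the same pair of mutually inverse maps (the paper's $\Phi$ sends $\sig$ to your $\vp$, and the paper's $\Theta$ sends $\vp$ to your $\sig$), verify the contravariant compatibility via the Mackey condition on the defining 2-fibered product, and check the round-trips using $M_{\ast}(\id)=\id$ and covariant compatibility applied to $\afr$. You are in fact slightly more thorough than the paper in that you explicitly address well-definedness on adjoint equivalence classes and additivity of $\vp_{\xg}$; one small remark is that the identity $M_{\ast}(s)\ci M^{\ast}(s)=\id$ for an adjoint equivalence $s$ is not pure 2-functoriality but follows from the Mackey condition (or equivalently from the fact that the 2-fibered product of $s$ with itself is trivial), so you may want to phrase that step accordingly.
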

\begin{proof}
For any $M\in\Ob(\SMackS)$ and $E\in\Ob(\SaddC)$, we construct natural maps
\[ \Th\co\SMackS(\Afr [E],M)\to\SaddC(E,M\uas) \]
and
\[ \Phi\co\SaddC(E,M\uas)\to\SMackS(\Afr [E],M), \]
which are inverse to each other.


\bigskip

\noindent {\bf {\rm (1)} Construction of $\Th$.}

For any $\vp\in\SMackS(\Afr [E],M)$ and any $\xg\in\Sbb^0$, we define
\[ \Th(\vp)_{\xg}\co E(\xg)\to M\uas(\xg) \]
by
\[ \Th(\vp)_{\xg}(\xi)=\vp_{\xg}([\xgixg,\xi]) \]
for any $\xi\in E(\xg)$. Then for any 1-cell $\al\co\xg\to\yh$, the following diagram becomes commutative.
\[
\xy
(-12,7)*+{E(\yh)}="0";
(12,7)*+{M\uas(\yh)}="2";
(-12,-7)*+{E(\xg)}="4";
(12,-7)*+{M\uas(\yh)}="6";
{\ar^{\Th(\vp)_{\yh}} "0";"2"};
{\ar_{E(\al)} "0";"4"};
{\ar^{M\uas(\al)} "2";"6"};
{\ar_{\Th(\vp)_{\xg}} "4";"6"};
{\ar@{}|\circlearrowright "0";"6"};
\endxy
\]

Indeed, for any $\eta\in E(\yh)$, we have
\begin{eqnarray*}
M\uas(\al)\Th(\vp)_{\yh}(\eta)&=&M\uas(\al)\vp_{\yh}([\yhiyh,\eta])\\
&=&\vp_{\xg}\Afr [E]\uas(\al)([\yhiyh,\eta])\\
&=&\vp_{\xg}([\xgixg,E(\al)(\eta)])\\
&=&\Th(\vp)_{\xg}E(\al)(\eta).
\end{eqnarray*}


This means that $\Th(\vp)=\{\Th(\vp)_{\xg} \}_{\xg\in\Sbb^0}$ forms a morphism $\Th(\vp)\co E\to M\uas$.

\bigskip

\noindent {\bf {\rm (2)} Construction of $\Phi$.}

For any $\thh\in\SaddC(E,M\uas)$ and $\xg\in\Sbb^0$, we define
\[ \Phi(\thh)_{\xg}\co \Afr [E](\xg)\to M(\xg) \]
by
\[ \Phi(\thh)_{\xg}([\akaxg,\xi])=M\sas(\afr)\thh_{\ak}(\xi). \]

To show that this defines a morphism of Mackey functors, it suffices to show the commutativity of the following diagrams, for any 1-cell $\al\co\xg\to\yh$.
\[
\xy
(-12,7)*+{\Afr [E](\xg)}="0";
(12,7)*+{M(\xg)}="2";
(-12,-7)*+{\Afr [E](\yh)}="4";
(12,-7)*+{M(\yh)}="6";
{\ar^{\Phi(\thh)_{\xg}} "0";"2"};
{\ar_{\Afr [E]\sas(\al)} "0";"4"};
{\ar^{M\sas(\al)} "2";"6"};
{\ar_{\Phi(\thh)_{\yh}} "4";"6"};
{\ar@{}|\circlearrowright "0";"6"};
\endxy
\quad,\quad
\xy
(-12,7)*+{\Afr [E](\yh)}="0";
(12,7)*+{M(\yh)}="2";
(-12,-7)*+{\Afr [E](\xg)}="4";
(12,-7)*+{M(\xg)}="6";
{\ar^{\Phi(\thh)_{\yh}} "0";"2"};
{\ar_{\Afr [E]\uas(\al)} "0";"4"};
{\ar^{M\uas(\al)} "2";"6"};
{\ar_{\Phi(\thh)_{\xg}} "4";"6"};
{\ar@{}|\circlearrowright "0";"6"};
\endxy.
\]

This can be confirmed as follows.
For any $[\akaxg,\xi]\in\Afr [E](\xg)$, we have
\begin{eqnarray*}
M\sas(\al)\Phi(\thh)_{\xg}([\akaxg,\xi])&=&M\sas(\al)M\sas(\afr)\thh_{\ak}(\xi)\\
&=&M\sas(\al\ci\afr)\thh_{\ak}(\xi)\\
&=&\Phi(\thh)_{\yh}\Afr [E]\sas(\al)([\akaxg,\xi]).
\end{eqnarray*}

For any $[\blbyh,\eta]\in\Afr [E](\yh)$, if we take a 2-fibered product
\[
\xy
(-8,6)*+{\ak}="0";
(8,6)*+{\bl}="2";
(-8,-6)*+{\xg}="4";
(8,-6)*+{\yh}="6";
{\ar^{\gamma} "0";"2"};
{\ar_{\afr} "0";"4"};
{\ar^{\bfr} "2";"6"};
{\ar_{\al} "4";"6"};
{\ar@{=>} (-2,0);(2,0)};
\endxy,
\]
then we have
\begin{eqnarray*}
M\uas(\al)\Phi(\thh)_{\yh}([\blbyh,\eta])&=&M\uas(\al)M\sas(\bfr)\thh_{\bl}(\eta)\\
&=&M\sas(\afr)M\uas(\gamma)\thh_{\bl}(\eta)\\
&=&M\sas(\afr)\thh_{\ak}E(\eta)\\
&=&\Phi(\thh)_{\xg}([\akaxg,E(\eta)])\\
&=&\Phi(\thh)_{\xg}\Afr [E]\sas(\al)([\blbyh,\eta]).
\end{eqnarray*}


\bigskip

\noindent {\bf {\rm (3)} $\Th\ci\Phi=\id$.}

For any $\thh\in\SaddC(E,M\uas)$ and $\xg\in\Sbb^0$, we have
\begin{eqnarray*}
(\Th\ci\Phi(\thh))_{\xg}(\xi)&=&\Phi(\thh)_{\xg}([\xgixg,\xi])\\
&=&M\sas(\id)\thh_{\xg}(\xi)\\
&=&\thh_{\xg}(\xi)\quad(\fa\xi\in E(\xg)),
\end{eqnarray*}
which means $\Th\ci\Phi(\thh)=\thh$.

\bigskip

\noindent {\bf {\rm (4)} $\Phi\ci\Th=\id$.}

For any $\vp\in\SMackS(\Afr [E],M)$ and $\xg\in\Sbb^0$, we have
\begin{eqnarray*}
(\Phi\ci\Th(\vp))_{\xg}([\akaxg,\xi])&=&M\sas(\afr)\Th(\varphi)_{\ak}(\xi)\\
&=&M\sas(\afr)\varphi_{\ak}([\akiak,\xi])\\
&=&\vp_{\xg}\Afr [E]\sas(\afr)([\akiak,\xi])\\
&=&\vp_{\xg}([\akaxg,\xi]),
\end{eqnarray*}
which means $\Phi\ci\Th(\vp)=\vp$.
\end{proof}


\begin{dfn}\label{DefFBurn}
Define a functor
\[ \Obig^R[-]\co\SaddC\to\MackSR \]
to be the composition of
\[ \SaddC\ov{\Afr [-]}{\lra}\SMackS\ov{(-)^R}{\lra}\MackSR. \]
When $R=\Zbb$, we simply write this functor as $\Obig [-]$.
\end{dfn}

\begin{cor}\label{CorFBurn}
This gives a left adjoint of the forgetful functor
\[ \MackSR\to\SaddC\ ;\ M=(M\uas,M\sas)\mapsto M\uas. \]
\end{cor}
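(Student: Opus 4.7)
The plan is to observe that this is immediate from composing two adjunctions we already have. Specifically, the forgetful functor $\MackSR \to \SaddC$ in question factors as
\[ \MackSR \xrightarrow{\text{forget}} \SMackS \xrightarrow{M \mapsto M^{\ast}} \SaddC, \]
since an $R$-linear Mackey functor is in particular a semi-Mackey functor, and its contravariant part is the same underlying $\Sett$-valued functor (it just happens to land in $\RMod$). So the right-hand forgetful functor in the corollary is the composition of two right adjoints.

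Next I invoke the two known left adjoints: Proposition \ref{PropAAdj} gives the adjunction
\[ \SMackS(\Afr[E], M) \cong \SaddC(E, M^{\ast}) \quad (E \in \SaddC,\ M \in \SMackS), \]
and Remark \ref{RemTensorR} gives the adjunction
\[ \MackSR(N^R, L) \cong \SMackS(N, L) \quad (N \in \SMackS,\ L \in \MackSR), \]
where on the right $L$ is viewed as a semi-Mackey functor via the forgetful functor. Since left adjoints compose to a left adjoint, the composite $\Obig^R[-] = (-)^R \circ \Afr[-]$ is left adjoint to the composite forgetful functor. Explicitly, for any $E \in \SaddC$ and $L \in \MackSR$, we have the natural chain of bijections
\[ \MackSR(\Obig^R[E], L) = \MackSR(\Afr[E]^R, L) \cong \SMackS(\Afr[E], L) \cong \SaddC(E, L^{\ast}), \]
which is the desired adjunction.

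There is no real obstacle; the only thing to be careful about is that the two forgetful functors really do compose to the one in the statement, i.e.\ that the $\SaddC$-object underlying $L \in \MackSR$ agrees whether one first forgets to $\SMackS$ and then to $\SaddC$, or forgets directly. This holds by construction, since in both cases one extracts the contravariant part $L^{\ast}$ and views $\RMod$-valued maps as maps of sets. Hence the composed natural bijection above is the sought adjunction, and the corollary follows.
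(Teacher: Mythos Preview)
Your proof is correct and follows exactly the same approach as the paper, which simply says ``This follows from Remark~\ref{RemTensorR} and Proposition~\ref{PropAAdj}.'' You have merely spelled out the composition of adjunctions in more detail than the paper does.
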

\begin{proof}
This follows from Remark \ref{RemTensorR} and Proposition \ref{PropAAdj}.
\end{proof}


\begin{rem}\label{RemFBurnAdd}
For any $E\in\Ob(\SaddC)$ and any $\xg\in\Sbb^0$, elements in $\Obig^R[E](\xg)$ can be written as $R$-linear combinations of elements in $\Afr [E](\xg)$.
\end{rem}

\begin{rem}\label{RemFBurn}
For any $M\in\Ob(\MackSR)$ and any $E\in\Ob(\SaddC)$, the natural bijections
\[ \MackSR(\Obig^R [E],M)\leftrightarrow\SaddC(E,M\uas) \]
are obtained by composing the natural bijection associated to the adjointness of $(-)^R$
\[ \MackSR(\Obig^R [E],M)\ov{\cong}{\lra}\SMackS(\Afr [E],M) \]
with those $\Phi, \Th$ defined in the proof of Proposition \ref{PropAAdj}.
We denote them by
\[ \ovl{\Th}\co\MackSR(\Obig [E],M)\to\SaddC(E,M\uas) \]
and
\[ \ovl{\Phi}\co\SaddC(E,M\uas)\to\MackSR(\Obig [E],M). \]
\end{rem}

\begin{cor}
By composing functors
\[ \SaddC\ov{\Obig^R [-]}{\lra}\MackSR\ov{\wt{(-)}}{\lra}\MackdSR\ov{\simeq}{\lra}\BisetFtr^R, \]
we obtain a functor $\SaddC\to\BisetFtr^R$.
\end{cor}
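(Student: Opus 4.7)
The plan is essentially to observe that every arrow in the displayed composition has already been constructed as a functor in the preceding material, so there is nothing left to do beyond noting that functors compose to give functors.

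More precisely, I would first recall that $\Obig^R[-] \co \SaddC \to \MackSR$ is a functor by Definition \ref{DefFBurn}, where it is built as the composite $(-)^R \ci \Afr[-]$ of the functor from Proposition \ref{PropAFunct} with the coefficient-extension functor of Remark \ref{RemTensorR}. Next, $\wt{(-)} \co \MackSR \to \MackdSR$ is a functor by Corollary \ref{CorMtAdj}. Finally, $\MackdSR \ov{\simeq}{\lra} \BisetFtr^R$ is an equivalence of categories by Theorem \ref{ThmBF}. Since the composition of functors is again a functor, the result follows immediately.

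There is no genuine obstacle here; the corollary is a formal consequence of the constructions of the three preceding sections. If anything deserves emphasis, it is the adjointness picture: each of $\Obig^R[-]$ and $\wt{(-)}$ is a left adjoint (to the forgetful functor $\MackSR \to \SaddC$ by Corollary \ref{CorFBurn}, and to the inclusion $\MackdSR \hookrightarrow \MackSR$ by Corollary \ref{CorMtAdj}, respectively), and the equivalence $\MackdSR \simeq \BisetFtr^R$ is in particular a left adjoint. Hence the composite functor $\SaddC \to \BisetFtr^R$ is itself left adjoint to the composite of the corresponding forgetful functors; this observation is not strictly required for the stated corollary but it is what makes the construction interesting and motivates the subsequent sections.
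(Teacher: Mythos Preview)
Your proposal is correct and matches the paper's treatment: the paper states this corollary without proof, since it is immediate from the fact that each of the three displayed arrows has already been constructed as a functor. Your additional remark about the composite being a left adjoint is accurate and anticipates the role this functor plays later, though the paper does not spell it out at this point.
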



\section{Analog of Boltje's $(-)_+$-construction.}

In the last section, we constructed a functor
\[ \Obig^R[-]\co\SaddC\to \MackSR, \]
which is left adjoint to $\MackSR\to\SaddC$. In this section, when $F$ is in $\AddCR$, furthermore we construct another Mackey functor $F_+$ by taking quotient of $\Obig^R[F]$. Remark that an element in $\Obig^R[F](\xg)$ is an $R$-linear combination of elements in $\Afr [F](\xg)$.

\begin{dfn}\label{DefIGX}
Let $F\co\Csc\to\RMod$ be any object in $\AddCR$. For any object $\xg\in\Sbb^0$, define $I_F(\xg)\subseteq\Obig^R[F](\xg)$ to be the $R$-submodule generated by the following subset of $\Obig^R[F](\xg)$.
\begin{equation}\label{Eq_elemI}
\Set{ \begin{array}{l} {[}\akaxg,r_1\xi_1+r_2\xi_2]\\
-\big(r_1[\akaxg,\xi_1]+r_2[\akaxg,\xi_2]\big)
\end{array}
\, |\, 
\begin{array}{c}
\akaxg\in(\Sxg)^0,\\
\xi_1,\xi_2\in F(\ak),\\
r_1,r_2\in R
\end{array}   }
\end{equation}

Denote the quotient module by $F_+(\xg)$, and the residue homomorphism by
\[ d^{(F)}_{\xg}\co\Obig^R[F](\xg)\to F_+(\xg). \]
\end{dfn}


\begin{prop}\label{PropFbMorph}
Let $F$ be an object in $\AddCR$. For any 1-cell $\al\co\xg\to\yh$, the homomorphisms $\Obig^R[F]\uas(\al)$ and $\Obig^R[F]\sas(\al)$ induce homomorphisms
\begin{eqnarray*}
&\ \ F_+\uas(\al)\co F_+(\yh)\to F_+(\xg),&\\
&F_{+\ast}(\al)\co F_+(\xg)\to F_+(\yh),&
\end{eqnarray*}
which make the following diagrams commutative.
\[
\xy
(-16,7)*+{\Obig^R[F](\yh)}="0";
(16,7)*+{\Obig^R[F](\xg)}="2";
(-16,-7)*+{F_+(\yh)}="4";
(16,-7)*+{F_+(\xg)}="6";
{\ar^{\Obig^R[F]\uas(\al)} "0";"2"};
{\ar_{d^{(F)}_{\yh}} "0";"4"};
{\ar^{d^{(F)}_{\xg}} "2";"6"};
{\ar_{F_+\uas(\al)} "4";"6"};
{\ar@{}|\circlearrowright "0";"6"};
\endxy\quad,\quad
\xy
(-16,7)*+{\Obig^R[F](\xg)}="0";
(16,7)*+{\Obig^R[F](\yh)}="2";
(-16,-7)*+{F_+(\xg)}="4";
(16,-7)*+{F_+(\yh)}="6";
{\ar^{\Obig^R[F]\sas(\al)} "0";"2"};
{\ar_{d^{(F)}_{\xg}} "0";"4"};
{\ar^{d^{(F)}_{\yh}} "2";"6"};
{\ar_{F_{+\ast}(\al)} "4";"6"};
{\ar@{}|\circlearrowright "0";"6"};
\endxy.
\]
Remark that $F_+\uas(\al)$ and $F_{+\ast}(\al)$ are uniquely determined by this commutativity, since $d^{(F)}_{\xg}$ and $d^{(F)}_{\yh}$ are surjective.
\end{prop}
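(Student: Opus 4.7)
The plan is to follow the same pattern as the proof of Proposition \ref{PropMtMorph}, reducing the existence of $F_+^{\ast}(\al)$ and $F_{+\ast}(\al)$ to showing that $\Obig^R[F]^{\ast}(\al)$ and $\Obig^R[F]_{\ast}(\al)$ carry the relation submodules into each other:
\[ \Obig^R[F]_{\ast}(\al)(I_F(\xg))\subseteq I_F(\yh)\quad\text{and}\quad \Obig^R[F]^{\ast}(\al)(I_F(\yh))\subseteq I_F(\xg). \]
Once these inclusions are established, the induced maps on the quotients exist and are uniquely determined by the stated commutativities, because the vertical maps $d^{(F)}_{\xg}$ and $d^{(F)}_{\yh}$ are surjective; the commutative square is then just the first isomorphism theorem for $R$-modules.

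For the covariant part, I would take a typical generator of $I_F(\xg)$ of the form
\[ \zeta=[\akaxg,r_1\xi_1+r_2\xi_2]-r_1[\akaxg,\xi_1]-r_2[\akaxg,\xi_2] \]
with $\xi_1,\xi_2\in F(\ak)$ and $r_1,r_2\in R$. Since $\Obig^R[F]_{\ast}(\al)$ is defined by $R$-linear extension from $[\akaxg,\xi]\mapsto[\ak\ov{\al\ci\afr}{\lra}\yh,\xi]$ (which keeps the element of $F(\ak)$ and only reroutes the structure map), applying it to $\zeta$ yields the element
\[ [\ak\ov{\al\ci\afr}{\lra}\yh,r_1\xi_1+r_2\xi_2]-r_1[\ak\ov{\al\ci\afr}{\lra}\yh,\xi_1]-r_2[\ak\ov{\al\ci\afr}{\lra}\yh,\xi_2], \]
which is manifestly a generator of $I_F(\yh)$ of the shape \eqref{Eq_elemI}.

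For the contravariant part, I would pick a generator of $I_F(\yh)$ of the form
\[ \xi=[\blbyh,r_1\eta_1+r_2\eta_2]-r_1[\blbyh,\eta_1]-r_2[\blbyh,\eta_2], \]
form the 2-fibered product of $\al$ and $\bfr$
\[
\xy
(-9,6)*+{\ak}="0";
(9,6)*+{\bl}="2";
(-9,-6)*+{\xg}="4";
(9,-6)*+{\yh}="6";
{\ar^{\gamma} "0";"2"};
{\ar_{\afr} "0";"4"};
{\ar^{\bfr} "2";"6"};
{\ar_{\al} "4";"6"};
{\ar@{=>} (-2,0);(2,0)};
\endxy
\]
guaranteed by Proposition \ref{Prop2Pullback}, and invoke the definition $\Obig^R[F]^{\ast}(\al)([\blbyh,\eta])=[\akaxg,F(\gamma)(\eta)]$. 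The essential observation is that $F$ takes values in $\RMod$, so $F(\gamma)\co F(\bl)\to F(\ak)$ is an $R$-homomorphism; hence
\[ F(\gamma)(r_1\eta_1+r_2\eta_2)=r_1F(\gamma)(\eta_1)+r_2F(\gamma)(\eta_2), \]
and $\Obig^R[F]^{\ast}(\al)(\xi)$ becomes
\[ [\akaxg,r_1F(\gamma)(\eta_1)+r_2F(\gamma)(\eta_2)]-r_1[\akaxg,F(\gamma)(\eta_1)]-r_2[\akaxg,F(\gamma)(\eta_2)], \]
which lies in $I_F(\xg)$.

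The main subtlety, as in the proof of Proposition \ref{PropMtMorph}, is choosing a compatible 2-fibered product so that the pullback is described on representatives; after that, both inclusions are formal and reduce to the $R$-linearity of $F$ on morphisms in $\Csc$, which is automatic from $F\in\Ob(\AddCR)$. No conditions on $\al$ (such as stab-surjectivity) are needed here, in contrast with the defining relation of $\wt{M}$ in Section 3.
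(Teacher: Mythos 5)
Your proposal is correct and matches the paper's argument almost step for step: both halves are proved by verifying that $\Obig^R[F]_{\ast}(\al)$ and $\Obig^R[F]^{\ast}(\al)$ carry the generators of $I_F(\xg)$ (resp.\ $I_F(\yh)$) into $I_F(\yh)$ (resp.\ $I_F(\xg)$), with the covariant case following by inspection of the formula $[\akaxg,\xi]\mapsto[\ak\ov{\al\ci\afr}{\lra}\yh,\xi]$ and the contravariant case using the 2-fibered product together with the $R$-linearity of $F(\gamma)$, exactly as in the paper. Your closing remark that no stab-surjectivity is needed (in contrast with Proposition~\ref{PropMtMorph}) is a correct and useful observation, though not part of the paper's proof.
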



\begin{proof}
We show in a similar way as in the proof of Proposition \ref{PropMtMorph}.

\smallskip

\noindent {\bf {\rm (1)} Existence of $F_{+\ast}(\al)$. }

It suffices to show $\Obig^R[F]\sas(\al)(I_F(\xg))\subseteq I_F(\yh)$. Take any generator $\xi$ of $I_F(\xg)$
\[ \xi=[\akaxg,r_1\xi_1+r_2\xi_2]-\big( r_1[\akaxg,\xi_1]+r_2[\akaxg,\xi_2]\big) \]
as in $(\ref{Eq_elemI})$. Then the element
\begin{eqnarray*}
\Obig^R[F]\sas(\al)(\xi)&=&[\ak\ov{\al\ci\afr}{\lra}\yh,r_1\xi_1+r_2\xi_2]\\
&-&\big( r_1[\ak\ov{\al\ci\afr}{\lra}\yh,\xi_1]+r_2[\ak\ov{\al\ci\afr}{\lra}\yh,\xi_2]\big)
\end{eqnarray*}
belongs to $I_F(\yh)$. 

\bigskip

\noindent {\bf {\rm (2)} Existence of $F_+\uas(\al)$.}

For any 1-cell $\blbyh$ in $\Sbb$, let
\[
\xy
(-8,6)*+{\ak}="0";
(8,6)*+{\bl}="2";
(-8,-6)*+{\xg}="4";
(8,-6)*+{\yh}="6";
{\ar^{\gamma} "0";"2"};
{\ar_{\afr} "0";"4"};
{\ar^{\bfr} "2";"6"};
{\ar_{\al} "4";"6"};
{\ar@{=>} (-2,0);(2,0)};
\endxy
\]
be a 2-fibered product.
For any $\eta_1,\eta_2\in F(\bl)$ and any $r_1,r_2\in R$, we have
\begin{eqnarray*}
&&\Obig^R[F]\uas(\al)\bigg([\blbyh,r_1\eta_1+r_2\eta_2]-\big(r_1[\blbyh,\eta_1]+r_2[\blbyh,\eta_2]\big)\bigg)\\
&=&[\akaxg,F(\gamma)(r_1\eta_1+r_2\eta_2)]\\
&\ &\ -\big(r_1[\akaxg,F(\gamma)(\eta_1)]+r_2[\akaxg,F(\gamma)(\eta_2)]\big)\\
&=&[\akaxg,r_1F(\gamma)(\eta_1)+r_2F(\gamma)(\eta_2)]\\
&\ &\ -\big(r_1[\akaxg,F(\gamma)(\eta_1)]+r_2[\akaxg,F(\gamma)(\eta_2)]\big)\\
&\in&I_F(\xg).
\end{eqnarray*}
\end{proof}


\begin{cor}\label{CorFbMack}
Let $F$ be any object in $\AddCR$. With the structure maps obtained in Proposition \ref{PropFbMorph}, the pair $F_+=(F_+\uas,F_{+\ast})$ becomes an $R$-linear Mackey functor on $\Sbb$, and $d_F=\{d^{(F)}_{\xg}\}_{\xg\in\Sbb^0}$ forms a morphism of $R$-linear Mackey functors $d_F\co\Obig^R[F]\to F_+$.
\end{cor}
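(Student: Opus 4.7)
The strategy is to transfer each of the Mackey-functor axioms from $\Obig^R[F]$ down to $F_+$ via the pointwise surjective quotient $d_F$. Since $\Obig^R[F]$ is already an $R$-linear Mackey functor on $\Sbb$ by Corollary \ref{CorFBurn}, and Proposition \ref{PropFbMorph} supplies the structure maps of $F_+$ together with the fact that each $d^{(F)}_{\xg}$ intertwines them with those of $\Obig^R[F]$, surjectivity of the quotient lets us deduce every identity on $F_+$ from the corresponding identity on $\Obig^R[F]$ by a diagram chase.

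First I would verify that $F_+^{\ast}$ and $F_{+\ast}$ are genuine 2-functors on $\Csc$. Preservation of composition and invariance under 2-cells (so that the induced maps are well-defined on $\Csc$, not merely on $\Sbb^1$) both follow by evaluating both sides of the required equality on an arbitrary $d^{(F)}_{\xg}(y)$, commuting through the defining squares of Proposition \ref{PropFbMorph}, and invoking the already-established identities in $\Obig^R[F]$. The Mackey condition is handled in the same fashion: for any 2-fibered product $(\ref{RCoeffAdd2})$ and any $y\in\Obig^R[F](\yh)$, one computes
\[ F_{+\ast}(\gamma)F_+^{\ast}(\delta)\bigl(d^{(F)}_{\yh}(y)\bigr) = d^{(F)}_{\xg}\bigl(\Obig^R[F]_{\ast}(\gamma)\Obig^R[F]^{\ast}(\delta)(y)\bigr), \]
and the mirror identity $F_+^{\ast}(\alpha)F_{+\ast}(\beta)\bigl(d^{(F)}_{\yh}(y)\bigr) = d^{(F)}_{\xg}\bigl(\Obig^R[F]^{\ast}(\alpha)\Obig^R[F]_{\ast}(\beta)(y)\bigr)$; the two sides agree by the Mackey condition for $\Obig^R[F]$.

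For additivity I would compare the commutative square
\[
\xy
(-26,8)*+{\Obig^R[F](\xg\am\yh)}="0";
(26,8)*+{\Obig^R[F](\xg)\oplus\Obig^R[F](\yh)}="2";
(-26,-8)*+{F_+(\xg\am\yh)}="4";
(26,-8)*+{F_+(\xg)\oplus F_+(\yh)}="6";
{\ar^{\cong} "0";"2"};
{\ar_{d^{(F)}_{\xg\am\yh}} "0";"4"};
{\ar^{d^{(F)}_{\xg}\oplus d^{(F)}_{\yh}} "2";"6"};
{\ar "4";"6"};
{\ar@{}|\circlearrowright "0";"6"};
\endxy
\]
whose top arrow is an isomorphism by additivity for $\Obig^R[F]$ and whose verticals are surjective. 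Surjectivity of the bottom arrow is then immediate; injectivity reduces to checking that $I_F(\xg\am\yh)$ maps onto $I_F(\xg)\oplus I_F(\yh)$ under the top isomorphism. This is the step requiring genuine content: using the explicit formula for the inverse of the additivity isomorphism from the proof of Proposition \ref{PropAFEachF} together with the fact that $F\in\Ob(\AddCR)$ sends coproducts to products, a typical generating relation $[\akaxg,r_1\xi_1+r_2\xi_2]-r_1[\akaxg,\xi_1]-r_2[\akaxg,\xi_2]$ over $\xg\am\yh$ splits as a sum of a generator in $I_F(\xg)$ and one in $I_F(\yh)$. The case $\xg=\emptyset$ yields $F_+(\emptyset)=0$ trivially. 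Finally, the assertion that $d_F$ is a morphism of $R$-linear Mackey functors is exactly the pair of commutative squares furnished by Proposition \ref{PropFbMorph}. The main obstacle is thus the additivity verification — pinning down the kernel correspondence for the relation submodules $I_F$; functoriality and the Mackey condition are routine quotient chases.
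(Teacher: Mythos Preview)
Your approach is correct and is precisely the detailed unpacking of what the paper compresses into a single line (``This immediately follows from Proposition~\ref{PropFbMorph}''). The quotient-chase you describe for functoriality, the Mackey condition, and the assertion about $d_F$ is exactly the content behind that sentence.

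One comment on additivity, which you flag as ``the step requiring genuine content'': it is actually no harder than the other axioms, and the kernel comparison for $I_F$ can be avoided entirely. For any $R$-linear Mackey functor $M$ the inverse of the additivity isomorphism $(M^{\ast}(\ups_{\xg}),M^{\ast}(\ups_{\yh}))$ is given by the covariant inclusions, namely $(m_1,m_2)\mapsto M_{\ast}(\ups_{\xg})(m_1)+M_{\ast}(\ups_{\yh})(m_2)$; this is a formal consequence of the Mackey condition applied to the 2-fibered products of the inclusions $\ups_{\xg},\ups_{\yh}$ against each other. By Proposition~\ref{PropFbMorph} these covariant maps also descend along $d_F$, so both the additivity map and its inverse pass to $F_+$, and the two composites equal the identity by surjectivity of $d^{(F)}$. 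Thus additivity for $F_+$ is just another instance of the same quotient chase, with no need to match $I_F(\xg\am\yh)$ against $I_F(\xg)\oplus I_F(\yh)$ directly. (Incidentally, in your sketch the sentence ``a typical generating relation \ldots\ over $\xg\am\yh$ splits as a sum of a generator in $I_F(\xg)$ and one in $I_F(\yh)$'' establishes the \emph{into} direction, whereas for injectivity you need \emph{onto}; the argument can be completed, but the covariant-inverse route sidesteps the issue.)
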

\begin{proof}
This immediately follows from Proposition \ref{PropFbMorph}.
\end{proof}

\begin{prop}
The correspondence
\[ (-)_+\co\AddCR\to\MackSR\ ;\ F\mapsto F_+ \]
forms a functor.
\end{prop}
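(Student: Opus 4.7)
The plan is to extend the object assignment $F\mapsto F_+$ to a functor by first defining $\tau_+\co F_+\to F\ppr_+$ for each morphism $\tau\co F\to F\ppr$ in $\AddCR$, via factoring through the surjection $d_F$. Explicitly, regard $\tau$ as a morphism in $\SaddC$ through the evident forgetful functor $\AddCR\to\SaddC$ and apply the functor $\Obig^R[-]$ of Definition \ref{DefFBurn} to obtain a morphism $\Obig^R[\tau]\co\Obig^R[F]\to\Obig^R[F\ppr]$ in $\MackSR$. Post-composing with the quotient morphism $d_{F\ppr}\co\Obig^R[F\ppr]\to F\ppr_+$ of Corollary \ref{CorFbMack} yields $d_{F\ppr}\ci\Obig^R[\tau]\co\Obig^R[F]\to F\ppr_+$, and the goal is to show this factors uniquely through $d_F\co\Obig^R[F]\to F_+$.

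The key step is to verify that $\Obig^R[\tau]_{\xg}\big(I_F(\xg)\big)\subseteq I_{F\ppr}(\xg)$ for every $\xg\in\Sbb^0$, so that $d^{(F\ppr)}_{\xg}\ci\Obig^R[\tau]_{\xg}$ vanishes on $I_F(\xg)$. It suffices to test this on a typical generator as in $(\ref{Eq_elemI})$. Unwinding the description of $\Obig^R[\tau]_{\xg}$ (which acts on $[\akaxg,\xi]\in\Afr[F](\xg)$ by $[\akaxg,\xi]\mapsto[\akaxg,\tau_{\ak}(\xi)]$ and extends $R$-linearly, by Proposition \ref{PropAFunct} and Definition \ref{DefFBurn}), and invoking the $R$-linearity of the component $\tau_{\ak}\co F(\ak)\to F\ppr(\ak)$ (which is automatic, since $\tau$ is a natural transformation between functors valued in $\RMod$), the image of a generator becomes
\[ [\akaxg,r_1\tau_{\ak}(\xi_1)+r_2\tau_{\ak}(\xi_2)]-\big(r_1[\akaxg,\tau_{\ak}(\xi_1)]+r_2[\akaxg,\tau_{\ak}(\xi_2)]\big), \]
which is manifestly another generator of $I_{F\ppr}(\xg)$. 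Surjectivity of $d^{(F)}_{\xg}$ then produces a unique $R$-homomorphism $(\tau_+)_{\xg}\co F_+(\xg)\to F\ppr_+(\xg)$ with $(\tau_+)_{\xg}\ci d^{(F)}_{\xg}=d^{(F\ppr)}_{\xg}\ci\Obig^R[\tau]_{\xg}$, and these components assemble into a Mackey functor morphism $\tau_+\co F_+\to F\ppr_+$, since compatibility with $F_+\uas$ and $F_{+\ast}$ is inherited from $\Obig^R[\tau]$ being such a morphism together with the uniqueness of the factorization.

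Finally, functoriality of $(-)_+$ is automatic from the uniqueness of factorization combined with the functoriality of $\Obig^R[-]$: the identity $\id_{F_+}$ and the composition $\tau\ppr_+\ci\tau_+$ fit the characterizing diagrams for $(\id_F)_+$ and $(\tau\ppr\ci\tau)_+$ respectively, so the two must agree in each case. I do not anticipate a serious obstacle; the essential content is the generator-to-generator computation above, which merely transports the $R$-linear defining relations of $I_F$ to those of $I_{F\ppr}$ along the $R$-linear components of $\tau$.
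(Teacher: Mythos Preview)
Your proposal is correct and follows essentially the same approach as the paper: define $\tau_+$ by factoring $d_{F\ppr}\ci\Obig^R[\tau]$ through the surjection $d_F$, and deduce functoriality from the uniqueness of this factorization together with the functoriality of $\Obig^R[-]$. The paper merely asserts the existence of the unique factorization and leaves the verification as routine, whereas you have spelled out the generator-to-generator computation showing $\Obig^R[\tau]_{\xg}(I_F(\xg))\subseteq I_{F\ppr}(\xg)$.
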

\begin{proof}
For any morphism $\tau\co F\to F\ppr$ in $\AddCR$, we can show easily that for each $\xg\in\Sbb^0$, there is a unique module homomorphism
\[ (\tau_+)_{\xg}\co F_+(\xg)\to F\ppr_+(\xg) \]
which makes the following diagram commutative.
\[
\xy
(-18,16)*+{\Afr [F](\xg)}="0";
(18,16)*+{\Afr [F\ppr](\xg)}="2";
(-18,0)*+{\Obig^R[F](\xg)}="4";
(18,0)*+{\Obig^R[F\ppr](\xg)}="6";
(-18,-16)*+{F_+(\xg)}="8";
(18,-16)*+{F\ppr_+(\xg)}="10";
{\ar^{\Afr [\tau]_{\xg}} "0";"2"};
{\ar_{\text{induced by}\ -\otimes_R} "0";"4"};
{\ar^{\text{induced by}\ -\otimes_R} "2";"6"};
{\ar_{\Obig^R[\tau]_{\xg}} "4";"6"};
{\ar_{d_F} "4";"8"};
{\ar^{d_{F\ppr}} "6";"10"};
{\ar_{(\tau_+)_{\xg}} "8";"10"};
{\ar@{}|\circlearrowright "0";"6"};
{\ar@{}|\circlearrowright "4";"10"};
\endxy
\]
This gives a morphism $\tau_+\co F_+\to F_+\ppr$. The functoriality of $(-)_+$ can be checked easily.
\end{proof}


\begin{prop}\label{PropFbAdj}
The functor $(-)_+\co \AddCR\to\MackSR$ is left adjoint to the forgetful functor taking contravariant parts
\[ \MackSR\to\AddCR\ ;\ M=(M\uas,M\sas)\mapsto M\uas. \]
\end{prop}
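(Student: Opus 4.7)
The plan is to reduce the claim to the adjunction $\Obig^R[-]\dashv\text{forgetful}\co\MackSR\to\SaddC$ already established in Corollary \ref{CorFBurn} (via $\ovl{\Phi},\ovl{\Th}$ of Remark \ref{RemFBurn}), by showing that the subspace $I_F(\xg)$ cutting out $F_+$ is precisely what forces the corresponding set-theoretic natural transformation to be $R$-linear. Concretely, for each $F\in\Ob(\AddCR)$ and $M\in\Ob(\MackSR)$, I would construct mutually inverse bijections
\[ \Psi\co\MackSR(F_+,M)\to\AddCR(F,M\uas),\quad \Xi\co\AddCR(F,M\uas)\to\MackSR(F_+,M) \]
and then check naturality in $F$ and $M$.

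For $\Psi$, given $\psi\co F_+\to M$, the composition $\psi\ci d_F\co\Obig^R[F]\to M$ lies in $\MackSR$, and applying $\ovl{\Th}$ yields a set-level natural transformation $\theta=\ovl{\Th}(\psi\ci d_F)\co F\to M\uas$ with $\theta_{\xg}(\xi)=\psi_{\xg}(d^{(F)}_{\xg}[\xgixg,\xi])$. For any $\xi_1,\xi_2\in F(\xg)$ and $r_1,r_2\in R$, the element $[\xgixg,r_1\xi_1+r_2\xi_2]-r_1[\xgixg,\xi_1]-r_2[\xgixg,\xi_2]$ is a generator of $I_F(\xg)$ and hence vanishes after $d^{(F)}_{\xg}$, so the $R$-linearity of $\psi_{\xg}$ gives $\theta_{\xg}(r_1\xi_1+r_2\xi_2)=r_1\theta_{\xg}(\xi_1)+r_2\theta_{\xg}(\xi_2)$. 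Thus $\theta\in\AddCR(F,M\uas)$.

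For $\Xi$, given $\theta\in\AddCR(F,M\uas)$, let $\vp=\ovl{\Phi}(\theta)\co\Obig^R[F]\to M$. On a typical generator of $I_F(\xg)$ of the form $[\akaxg,r_1\xi_1+r_2\xi_2]-r_1[\akaxg,\xi_1]-r_2[\akaxg,\xi_2]$, the formula $\vp_{\xg}([\akaxg,\xi])=M\sas(\afr)\theta_{\ak}(\xi)$ gives
\[ M\sas(\afr)\bigl(\theta_{\ak}(r_1\xi_1+r_2\xi_2)-r_1\theta_{\ak}(\xi_1)-r_2\theta_{\ak}(\xi_2)\bigr)=0 \]
by the $R$-linearity of $\theta_{\ak}$. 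Hence $\vp_{\xg}(I_F(\xg))=0$ and, by the surjectivity of $d^{(F)}_{\xg}$ (Proposition \ref{PropFbMorph}), $\vp$ factors uniquely through $d_F$ as $\vp=\psi\ci d_F$ for some $\psi\co F_+\to M$ in $\MackSR$; set $\Xi(\theta)=\psi$.

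That $\Psi$ and $\Xi$ are mutual inverses will be inherited directly from the fact that $\ovl{\Th}$ and $\ovl{\Phi}$ are, combined once more with the surjectivity of $d_F$ for the uniqueness of the factorization. Naturality of this bijection in $F$ and $M$ is a routine diagram chase using functoriality of $\Obig^R[-]$, $(-)_+$, and the definition of $\Obig^R[\tau]$ and $\tau_+$. The main pressure point, which I would be careful to write out, is the step verifying that the $I_F$-relations correspond \emph{exactly} (not just sufficiently) to $R$-linearity; both directions rely on the specific shape of $\ovl{\Phi}$ and $\ovl{\Th}$ and on the fact that $M\sas(\afr)$ does not cause any collapse (which is automatic since the identity 1-cell $\xg\ov{\id}{\to}\xg$ is in particular available, giving $M\sas(\id)=\id$ as witnessed in step (3) of the proof of Proposition \ref{PropAAdj}).
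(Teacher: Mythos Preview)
Your proposal is correct and follows essentially the same approach as the paper: both reduce to the adjunction $\Obig^R[-]\dashv(\text{forgetful})$ via $\ovl{\Th},\ovl{\Phi}$, and then verify that under this bijection the condition $\vp_{\xg}(I_F(\xg))=0$ (equivalently, factoring through $d_F$) matches the condition that $\theta_{\xg}$ be $R$-linear, with the same two computations in each direction. The only cosmetic difference is that the paper frames it as restricting a known bijection to subsets, while you build explicit maps $\Psi,\Xi$ and inherit invertibility; the content is identical.
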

\begin{proof}
It suffices to construct a natural bijection
\[ \MackSR(F_+,M)\cong\AddCR(F,M\uas) \]
for any $F\in\Ob(\AddCR)$ and any $M\in\Ob(\MackSR)$.

Remark that $d_F\co\Obig^R[F]\to F_+$ induces a injection
\[ d_F^{\sh}=(-\ci d_F)\co \MackSR(F_+,M)\hookrightarrow\MackSR(\Obig^R[F],M), \]
since $d^{(F)}_{\xg}$ is surjective for each $\xg\in\Sbb^0$. An element $\vp\in\MackSR(\Obig^R[F],M)$ comes from $\MackSR(F_+,M)$ if and only if $\vp$ satisfies
\[ \vp_{\xg}(I_F(\xg))=0 \]
for any $\xg\in\Sbb^0$.


On the other hand, there is a natural inclusion
\[ \iota\co\AddCR(F,M\uas)\hookrightarrow\SaddC(F,M\uas). \]
An element $\thh\in\SaddC(F,M\uas)$ belongs to $\AddCR(F,M\uas)$ if and only if
\[ \thh_{\xg}\co F(\xg)\to M(\xg) \]
is an $R$-module homomorphism for any $\xg\in\Sbb^0$.

Together with Remark \ref{RemFBurn}, we have the following diagram.
\[
\xy
(-20,12)*+{\MackSR(F_+,M)}="2";
(20,12)*+{\AddCR(F,M\uas)}="4";
(-40.6,0.6)*+{\ovl{\Th}:}="10";
(-20,0)*+{\MackSR(\Obig^R[F],M)}="12";
(20,0)*+{\SaddC(F,M\uas)}="14";
(37,0.4)*+{:\ovl{\Phi}}="16";
{\ar@{^(->}_{d_F^{\sh}} "2";"12"};
{\ar@{^(->}_{\iota} "4";"14"};
{\ar@{<->}_(0.54){\cong} "12";"14"};
\endxy
\]
To show the Proposition \ref{PropFbAdj}, it remains to show that the corresponding elements
\[ \vp\in\MackSR(\Obig^R[F],M))\ \ \text{and}\ \ \thh\in\SaddC(F,M\uas) \]
satisfy
\[ \vp\in d^{\sh}_F(\MackSR(F_+,M))\ \ \Leftrightarrow\ \ \thh\in\AddCR(F,M\uas). \]
Namely, it suffices to show the following {\rm (1)} and {\rm (2)}.
\begin{enumerate}
\item If $\vp\in\MackSR(F_+,M)$ comes from $\MackSR(\Obig^R[F],M)$, then the map $\ovl{\Th}(\vp)_{\xg}\co F(\xg)\to M(\xg)$ becomes an $R$-homomorphism  for any $\xg\in\Ob(\Csc)$.
\item For any $\thh\in\AddCR(F,M\uas)$, the map $\ovl{\Phi}(\thh)_{\xg}\co \Obig^R[F](\xg)\to M(\xg)$ 
satisfies $\ovl{\Phi}(\thh)_{\xg}(I_F(\xg))=0$.
\end{enumerate}


\medskip

\noindent {\bf Confirmation of {\rm (1)}.}

Suppose $\vp\in\MackSR(F_+,M)$ satisfies
\[ \vp=\psi\ci d_F \]
for some $\psi\in\MackSR(\Obig^R[F], M)$. Let $\xg\in\Sbb^0$ be any 0-cell. Then for any pair of elements $\xi_1,\xi_2\in F(\xg)$ and any $r_1,r_2\in R$, we have
\begin{eqnarray*}
\ovl{\Th}(\vp)_{\xg}(r_1\xi_1+r_2\xi_2)&=&\vp_{\xg}([\xgixg,r_1\xi_1+r_2\xi_2])\\
&=&\psi_{\xg}\ci d^{(F)}_{\xg}([\xgixg,r_1\xi_1+r_2\xi_2])\\
&=&\psi_{\xg}\ci d^{(F)}_{\xg}\big(r_1[\xgixg,\xi_1]+r_2[\xgixg,\xi_2]\big)\\
&=&\vp_{\xg}(r_1[\xgixg,\xi_1]+r_2[\xgixg,\xi_2])\\
&=&r_1\vp_{\xg}([\xgixg,\xi_1])+r_2\vp_{\xg}([\xgixg,\xi_2])\\
&=&r_1\ovl{\Th}(\vp)_{\xg}(\xi_1)+r_2\ovl{\Th}(\vp)_{\xg}(\xi_2).
\end{eqnarray*}


\noindent {\bf Confirmation of {\rm (2)}.}

Let $\thh\in\AddCR(F,M\uas)$ be any element, and let $\xg\in\Sbb^0$ be any 0-cell. Take any generator
\[ \xi=[\akaxg,r_1\xi_1+r_2\xi_2]-\big( r_1[\akaxg,\xi_1]+r_2[\akaxg,\xi_2]\big)  \]
of $I_F(\xg)$, as in $(\ref{Eq_elemI})$.
Then we have
\begin{eqnarray*}
\ovl{\Phi}(\thh)_{\xg}(\xi)&=&\ovl{\Phi}(\thh)_{\xg}([\akaxg,r_1\xi_1+r_2\xi_2])\\
&\ &\ -r_1\ovl{\Phi}(\thh)_{\xg}([\akaxg,\xi_1])-r_2\ovl{\Phi}(\thh)_{\xg}([\akaxg,\xi_2])\\
&=&M\sas(\afr)\thh_{\ak}(r_1\xi_1+r_2\xi_2)-r_1M\sas(\afr)\thh_{\ak}(\xi_1)-r_2M\sas(\afr)\thh_{\ak}(\xi_2)\\
&=&0,
\end{eqnarray*}
since $M\sas(\afr)\ci\thh_{\ak}$ is an $R$-homomorphism.
\end{proof}

\begin{cor}
By composing functors
\[ \AddCR\ov{(-)_+}{\lra}\MackSR\ov{\wt{(-)}}{\lra}\MackdSR\ov{\simeq}{\lra}\BisetFtr^R, \]
we obtain a functor $\AddCR\to\BisetFtr^R$.
\end{cor}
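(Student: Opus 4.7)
The statement is essentially immediate from what has already been established, so the plan is mostly to collect the three previously constructed functors and observe that their composition is a functor. Concretely, Proposition~5.5 (the statement preceding this corollary) gives the functor $(-)_+\co\AddCR\to\MackSR$; Corollary~3.6 gives the functor $\wt{(-)}\co\MackSR\to\MackdSR$; and Theorem~2.30 supplies the equivalence $\MackdSR\simeq\BisetFtr^R$. Since composition of functors is a functor, the resulting correspondence
\[ F\mapsto\bigl(\,\text{image under equivalence of}\,\,\wt{F_+}\,\bigr) \]
is functorial on morphisms, and this is all the corollary asks for.

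The only verification worth writing is the triviality that each intermediate arrow really has been shown to be a functor on both objects and morphisms, which is then dispatched by associativity of functor composition in $\mathit{Cat}$. There is no genuine obstacle here. If one wanted to add content, the natural remark is that each of the three functors is a \emph{left adjoint}: $(-)_+$ is left adjoint to the forgetful functor $\MackSR\to\AddCR$ by Proposition~5.5, $\wt{(-)}$ is left adjoint to the inclusion $\MackdSR\hookrightarrow\MackSR$ by Corollary~3.6, and the equivalence of Theorem~2.30 is of course a (two-sided) adjoint. Consequently the composite functor $\AddCR\to\BisetFtr^R$ is itself a left adjoint, namely to the composition of the corresponding right adjoints $\BisetFtr^R\simeq\MackdSR\hookrightarrow\MackSR\to\AddCR$ (restriction to contravariant parts). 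This is the statement that will be refined in the next section as the functor $(-)_{\maru}$, so in practice the proof here is just a one-line appeal to the preceding results.
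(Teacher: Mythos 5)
Your proof is correct and matches the paper's (implicit) approach exactly: the paper gives no explicit proof for this corollary because it is immediate from composing the already-established functors. Your added observation that the composite is a left adjoint is accurate and anticipates the role of $(-)_{\maru}$ in Section~6.
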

By construction, this sends the trivial functor $0\in\Ob(\AddCR)$ to the ordinary Burnside functor.


\section{Equivalence of categories $\AddC\simeq\ResC$.}

Let $\FG$ denote the category of finite groups and group homomorphisms. Define its \lq stabilization' as follows.

\begin{dfn}\label{DefFinStable}
Define a category $\sFG$ as follows.
\begin{itemize}
\item[-] $\Ob(\sFG)=\Ob(\FG)$.
\item[-] For any $G,H\in\Ob(\sFG)$, the set of morphisms $\sFG(G,H)$ is defined to be the quotient set
\[ \sFG(G,H)=\FG(G,H)/\sim, \]
where two homomorphisms $f,f\ppr\in\FG(G,H)$ are equivalent $(f\sim f\ppr)$ if and only if there exists an element $h\in H$ satisfying $f\ppr=\sigma_h\ci f$, i.e.,
\[ f\ppr(x)=h\cdot f(x)\cdot h\iv\quad(\fa x\in G). \]
We denote the equivalence class of $f$ in $\sFG$ by $\und{f}$.
\end{itemize}
\end{dfn}

\begin{rem}\label{RemFinStable}
Remark that we have a fully faithful functor
\[ \phi\co\sFG\to\Csc, \]
which sends $G\ov{\und{f}}{\lra}H$ in $\sFG$ to $\ptg\ov{\ptf}{\lra}\pth$ in $\Csc$.
\end{rem}

\begin{dfn}\label{DefResFtr}
In analogy with the case of ordinary restriction functor (\cite{Boltje}), we denote the category of contravariant functors from $\sFG$ to $\RMod$ simply by
\[ \ResCR=\FFA. \]
We call an object $P\in\Ob(\ResCR)$ a {\it restriction functor}.
\end{dfn}

\begin{rem}\label{RemEquivAdd}
Remark that $\ResCR$ is equivalent to the full subcategory of the category $\Fun(\FG,\RMod)$, consisting of contravariant functors $P\co\FG\to\RMod$ which satisfy
\[ P(\sig_g)=\id_G \]
for any conjugation homomorphism $\sig_g\co G\to G$ associated to $g\in G$.
\end{rem}


\begin{prop}\label{PropEquivAdd}
There is an equivalence of categories
\[ \AddCR\ov{\cong}{\lra}\ResCR. \]
\end{prop}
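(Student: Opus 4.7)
The plan is to realize $\AddCR \simeq \ResCR$ as restriction along the fully faithful functor $\phi: \sFG \to \Csc$ from Remark 6.2, paired with an extension functor built from the orbit decomposition of Proposition 2.5. Denote these mutually quasi-inverse functors by $R: \AddCR \to \ResCR$ and $L: \ResCR \to \AddCR$.

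First I would define $R$ by $R(F)(G) = F(\ptg)$ and $R(F)(\und{f}) = F(\ptf)$. Well-definedness reduces to showing that $f\ppr = \sig_h \ci f$ in $\FG(G,H)$ implies $\und{\ptf} = \und{\frac{\pt}{f\ppr}}$ in $\Csc$; indeed, the single element $\ep_{\pt} = h$ constitutes a 2-cell $\ptf \tc \frac{\pt}{f\ppr}$ in $\Sbb$, thanks to the identity $h\, f(g)\, h\iv = f\ppr(g)$.

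For $L$, given $P \in \ResCR$ and $\xg \in \Sbb^0$, fix orbit representatives $x_1, \ldots, x_s$ of $X$ and set $L(P)(\xg) = \prod_{i=1}^{s} P(G_{x_i})$. For a morphism $\und{\al}: \xg \to \yh$ and orbit representatives $y_1, \ldots, y_t$ of $Y$, each $\al(x_i)$ lies in some orbit $Hy_{j(i)}$, so write $\al(x_i) = h_i y_{j(i)}$. The 1-cell axiom $\al(gx_i) = \thh_{x_i}(g)\al(x_i)$ forces $\thh_{x_i}(G_{x_i}) \subseteq H_{\al(x_i)} = h_i H_{y_{j(i)}} h_i\iv$, so $f_i := \sig_{h_i\iv} \ci \thh_{x_i}|_{G_{x_i}}$ is a homomorphism $G_{x_i} \to H_{y_{j(i)}}$. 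The component of $L(P)(\und{\al})$ out of the $j$-th factor is then $\prod_{i : j(i) = j} P(f_i)$.

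The main obstacle is verifying that $L(P)(\und{\al})$ is independent of the several choices made: the representative $\al$ of the 2-equivalence class (changing $\al$ by a 2-cell $\ep$ modifies $h_i$ to $\ep_{x_i} h_i$), the orbit representatives in $X$ and $Y$, and the choice of $h_i$ with $\al(x_i) = h_i y_{j(i)}$. In every case the ambiguity composes $f_i$ with an inner automorphism $\sig_h$ of $H_{y_{j(i)}}$, and the defining relation $P(\sig_h) = \id$ from Remark 6.4 absorbs it; functoriality of $L(P)$ and of $L$ itself follow by analogous bookkeeping on composites. Additivity $L(P) \in \AddCR$ is immediate from the product definition, since 2-coproducts in $\Sbb$ concatenate orbit data. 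Finally, the natural isomorphism $LR(F) \cong F$ is obtained by applying the additivity isomorphism of $F$ to the decomposition $\xg \simeq \coprod_i \frac{\pt}{G_{x_i}}$ of Proposition 2.5, and $RL(P) \cong P$ is trivial since $\ptg$ has a single $G$-orbit with stabilizer $G$.
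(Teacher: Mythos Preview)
Your proposal is correct and follows essentially the same approach as the paper: both define the restriction functor as precomposition with $\phi$, and both build the extension functor by choosing orbit representatives and assembling a block matrix of maps $P(f_i)$ indexed by orbits. The only cosmetic difference is that the paper packages the homomorphism $f_i\co G_{x_i}\to H_{y_{j_i}}$ as the composite 1-cell $\al_i=(\zeta_{\yh}^{j_i})\iv\ci\al|_{Gx_i}\ci\zeta_{\xg}^i$ via the adjoint equivalences of Proposition~2.5, whereas you unpack it explicitly as $\sig_{h_i\iv}\ci\thh_{x_i}|_{G_{x_i}}$; these are the same morphism in $\sFG$, and your discussion of why the choices only affect $f_i$ up to inner automorphisms is exactly the well-definedness check the paper dismisses as ``straightforward''.
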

\begin{proof}
The proof goes completely in the same manner as in \cite{N_BrRng}. Define functors
\[ \Rbb\co \AddCR\to\ResCR \quad \text{and}\quad \Fbb\co \ResCR\to\AddCR \]
as follows.


\medskip

{\rm (1)} $\Rbb$ is the functor induced by the composition by $\phi$. Namely, for any $F\ov{\eta}{\lra}F\ppr$ in $\AddCR$, we define
\[ \Rbb(F)\ov{\Rbb(\eta)}{\lra}\Rbb(F\ppr) \]
to be
\[ F\ci\phi\ov{\eta\ci\phi}{\lra}F\ppr\ci\phi. \]

{\rm (2)} For any restriction functor $P\in\Ob(\ResCR)$, the functor
\[ \Fbb(P)=\Fbb_P\co\Csc\to\RMod \]
is defined as follows.
\begin{itemize}
\item[{\rm (i)}] For each $\xg\in\Ob(\Csc)$, take a set of representatives $x_1,\ldots,x_s\in X$ of orbits in $X$. Then for each $1\le i\le s$, there is an adjoint equivalence
\[ \zeta_{\xg}^i\co \frac{\pt}{G_{x_i}}\ov{\simeq}{\lra}\frac{Gx_i}{G}, \]
which yields an adjoint equivalence
\[ \zeta_{\xg}=\un{1\le i\le s}{\bigcup}\zeta_{\xg}^i\co \frac{\pt}{G_{x_1}}\am\cdots\am \frac{\pt}{G_{x_s}}\ov{\simeq}{\lra}\frac{Gx_1}{G}\am\cdots\am\frac{Gx_s}{G}\ov{\simeq}{\lra}\xg. \]
Define $\Fbb_P(\xg)$ by
\[ \Fbb_P(\xg)=P(G_{x_1})\oplus\cdots\oplus P(G_{x_s}). \]

\item[{\rm (ii)}] Let $\xg,\yh\in\Ob(\Csc)$ be any pair of objects, with the sets of representatives of orbits $x_1,\ldots,x_s\in X$ and $y_1,\ldots,y_t\in Y$ and morphisms $\zeta_{\xg},\zeta_{\yh}$ chosen in {\rm (i)}. Let $\al\co\xg\to\yh$ be any 1-cell in $\Sbb$. Since a 1-cell preserves orbits, for each $1\le i\le s$, there exists a unique $j_i$ satisfying $\al(Gx_i)\subseteq Hy_{j_i}$. This yields a 1-cell
\[ \al_i=\big( \frac{\pt}{G_{x_i}}\ov{\zeta_{\xg}^i}{\lra}\frac{Gx_i}{G}\ov{\al|_{Gx_i}}{\lra}\frac{Hy_{j_i}}{H}\ov{(\zeta_{\yh}^{j_i})\iv}{\lra}\frac{\pt}{H_{y_{j_i}}} \big). \]
Define $\omega_{ij}$ by
\[ \omega_{ij}=\begin{cases}\, P(\al_i)& j=j_i\\ \,0& \text{otherwise} \end{cases}. \]
Using this, we define
\[ \Fbb_P(\al)\co \Fbb_P(\yh)\to\Fbb_P(\xg) \]
to be the matrix
\[ {[}\omega_{ij}]_{i,j}\co P(H_{y_1})\oplus\cdots\oplus P(H_{y_t})\to P(G_{x_1})\oplus\cdots\oplus P(G_{x_s}). \]
\end{itemize}


It is straightforward to check those $\Rbb$ and $\Fbb$ in fact form functors.
We only show they are mutually quasi-inverse to each other.

For any $P\in\Ob(\ResCR)$ and any $G\in\Ob(\sFG)$, we have
\[ \Rbb(\Fbb(P)(G))=\Fbb_P(\ptg)=P(G), \]
and this gives an isomorphism
\[ \Rbb(\Fbb(P))\ov{\cong}{\lra}P, \]
which is natural in $P$.

Conversely, for any $F\in\Ob(\AddCR)$ and any $\xg\in\Ob(\Csc)$ with the set of representatives of orbits $x_1,\ldots,x_s\in X$, we have an adjoint equivalence $\zeta_{\xg}$ as in {\rm (i)} above,
which yields an isomorphism
\begin{eqnarray*}
\Fbb(\Rbb(F))(\xg)&=&\Rbb(F)(G_{x_1})\oplus\cdots\oplus \Rbb(F)(G_{x_s})\\
&=&F(\frac{\pt}{G_{x_1}})\oplus\cdots \oplus F(\frac{\pt}{G_{x_s}})\ \ 
\underset{F(\zeta_{\xg})}{\ov{\cong}{\lla}}\ \ F(\xg).
\end{eqnarray*}
These form an isomorphism
\[ \Fbb(\Rbb(F))\ov{\cong}{\lra} F, \]
and it can be confirmed to be natural in $F$.
\end{proof}

\begin{dfn}\label{Deftr}
Let $G,H$ be finite groups. For a homomorphism $f\co G\to H$, we denote the bisets
\[ \HHG=\Inddef_f,\quad\text{and}\quad \GHH=\Infres_f \]
by $\tbf(f)$ and $\rbf(f)$, respectively.
\end{dfn}

\begin{rem}\label{Remtr}
If $f,f\ppr\co G\to H$ satisfy $\und{f}=\und{f}\ppr$ in $\sFG$, then there are isomorphisms of bisets
\[ \tbf(f)\cong\tbf(f\ppr)\quad\text{and}\quad\rbf(f)\cong\rbf(f\ppr). \]

Remark that this $\rbf$ defines a contravariant functor
\[ \rbf\co \sFG\to \Bcal, \]
and thus by composition, a functor
\[ \rbf^{\sh}=(-\circ\rbf)\co \BisetFtr^R\simeq\Add(\Bcal,\RMod)\to\ResCR. \]
\end{rem}

\begin{dfn}\label{DefOfmaru}
By composing functors obtained so far, we define a functor $(-)_{\maru}\co\ResCR\to\BisetFtr^R$ to be the composition of
\[ \ResCR\ov{\Fbb}{\lra}\AddCR\ov{(-)_+}{\lra}\MackSR\ov{\wt{(-)}}{\lra}\MackdSR\ov{\simeq}{\lra}\BisetFtr^R. \]
\end{dfn}

\begin{cor}
The functor $(-)_{\maru}\co\ResCR\to\BisetFtr^R$ is a left adjoint to
\[ \rbf^{\sh}\co\BisetFtr^R\to\ResCR. \]
\end{cor}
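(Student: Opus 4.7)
My plan is to exploit the fact that $(-)_{\maru}$ is, by Definition \ref{DefOfmaru}, a composition of four functors each of which is already known (from earlier in the paper) to possess a right adjoint. The composition of left adjoints is again a left adjoint, and its right adjoint is the composition of the individual right adjoints taken in reverse order. Thus the plan reduces to identifying this composition with $\rbf^{\sh}$.

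Concretely, I would first assemble the diagram of right adjoints. The equivalence $\MackdSR\simeq\BisetFtr^R$ (Theorem \ref{ThmBF}) provides a quasi-inverse $\BisetFtr^R\to\MackdSR$; Corollary \ref{CorMtAdj} makes the inclusion $\MackdSR\hookrightarrow\MackSR$ the right adjoint of $\wt{(-)}$; Proposition \ref{PropFbAdj} makes the contravariant-part forgetful $\MackSR\to\AddCR$ the right adjoint of $(-)_{+}$; and the equivalence $\Rbb\co\AddCR\to\ResCR$ from Proposition \ref{PropEquivAdd} is a quasi-inverse of $\Fbb$. Composing these yields a right adjoint
\[ \BisetFtr^R\ov{\simeq}{\lra}\MackdSR\hookrightarrow\MackSR\ov{(-)\uas}{\lra}\AddCR\ov{\Rbb}{\lra}\ResCR \]
to $(-)_{\maru}$.

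The remaining step, which is the main obstacle, is to verify that this composition is naturally isomorphic to $\rbf^{\sh}$. Given $B\in\Ob(\BisetFtr^R)$, let $M$ denote the corresponding deflative Mackey functor under the equivalence of Theorem \ref{ThmBF}. Applying $\Rbb$ to the contravariant part $M\uas$ produces, by construction in the proof of Proposition \ref{PropEquivAdd} (which factors through the functor $\phi\co\sFG\to\Csc$ of Remark \ref{RemFinStable}), the functor $G\mapsto M(\ptg)$, with a group homomorphism $f\co G\to H$ acting via $M\uas(\ptf)\co M(\pth)\to M(\ptg)$. Hence what needs to be verified is that under the equivalence $\MackdSR\simeq\BisetFtr^R$ of \cite{N_BisetMackey}, the value $M(\ptg)$ is identified with $B(G)$, and the morphism $M\uas(\ptf)$ is identified with $B(\rbf(f))=B({}_GH_H)$; this is precisely how the restriction/inflation bisets arise on the Mackey side in \cite{N_BisetMackey}, so it is a straightforward unpacking of that equivalence.

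Having identified the two right adjoints, naturality of the isomorphism in $B$ follows from the fact that each of the intervening equivalences and adjunctions is natural, and the required adjunction bijection
\[ \BisetFtr^R\big(P_{\maru},B\big)\ \cong\ \ResCR\big(P,\rbf^{\sh}(B)\big) \]
is then obtained by composing the four adjunction bijections in sequence. The only genuine labor is the bookkeeping in Step 3; every other ingredient is already in place.
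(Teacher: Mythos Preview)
Your proposal is correct and follows essentially the same approach as the paper: compose the right adjoints of the four constituent functors of $(-)_{\maru}$ and then identify that composite with $\rbf^{\sh}$. The paper's proof is terser, simply asserting the commutativity of the square of right adjoints (which is exactly your Step~3) and citing Corollary~\ref{CorMtAdj}, Proposition~\ref{PropFbAdj}, and Proposition~\ref{PropEquivAdd}; your elaboration of why $M\uas(\ptf)$ corresponds to $B(\rbf(f))$ under the equivalence of Theorem~\ref{ThmBF} is precisely the verification the paper leaves implicit.
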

\begin{proof}
This follows from Corollary \ref{CorMtAdj}, Proposition \ref{PropFbAdj} and \ref{PropEquivAdd}, since there is a commutative diagram of functors as follows.
\[
\xy
(-32,10)*+{\AddCR}="0";
(4,10)*+{\MackSR}="2";
(32,10)*+{\MackdSR}="4";
(-32,-10)*+{\ResCR}="6";
(32,-10)*+{\BisetFtr^R}="8";
{\ar_{M\uas\mapsfrom (M\uas,M\sas)} "2";"0"};
{\ar@{_(->} "4";"2"};
{\ar_{\Rbb}^{\simeq} "0";"6"};
{\ar^{\simeq} "4";"8"};
{\ar^{\rbf^{\sh}} "8";"6"};
{\ar@{}|\circlearrowright "0";"8"};
\endxy
\]
\end{proof}

\section{Direct construction of $(-)_{\maru}\co \ResC\to\BisetFtr$.}


\begin{dfn}\label{DefEplusEachGX}
Let $P\co\sFG\to\RMod$ be any object in $\ResCR$. For each finite group $G$, define $R$-modules $S_P(G)$, $N_P(G)$ and $\Pm(G)$ as follows.
\begin{enumerate}
\item Define $S_P(G)$ by 
\[ S_P(G)=\bigoplus_{K\ov{f}{\to}G}P(K), \]
where the direct sum runs over all group homomorphisms $K\ov{f}{\to}G$. For $K\ov{f}{\to}G$ and $\kp\in P(K)$, the corresponding element in $S_P(G)$ is denoted by $(\kfgk)$ or simply by $(\fk)$
\item $N_P(G)\subseteq S_P(G)$ is the submodule generated by the following subset of $S_P(G)$.
\[ \Set{(\fk)-(f\ppr,P(\pi)(\kp))|\ %
\xy
(-7,6)*+{K\ppr}="0";
(7,6)*+{K}="2";
(-7,-6)*+{G}="4";
(7,-6)*+{G}="6";
{\ar^{\pi} "0";"2"};
{\ar_{f\ppr} "0";"4"};
{\ar^{f} "2";"6"};
{\ar_{\sig_g} "4";"6"};
{\ar@{}|\circlearrowright "0";"6"};
\endxy
\ \text{in}\ \FG,
\begin{array}{l}
\pi\ \text{is surjective},\\
g\in G,\ \kp\in P(K).
\end{array}}
\]
\item $\Pm(G)$ is the quotient of $S_P(G)$ by $N_P(G)$:
\[ \Pm(G)=S_P(G)/N_P(G). \]
We denote the equivalence class of $(\kfgk)$ in $\Pm(G)$ by $[\kfgk]$, or simply by $[\fk]$.

\end{enumerate}
\end{dfn}


\begin{dfn}\label{DefContraction}
Let $H,K$ be finite groups.
\begin{enumerate}
\item A {\it span} to $H$ from $K$ in $\FG$ is defined to be a triplet $(q,\Gcal,p)$ of
\begin{itemize}
\item[-] a finite group $\Gcal$,
\item[-] homomorphisms $p\co\Gcal\to K$ and $q\co \Gcal\to H$.
\end{itemize}
\item Let $(q,\Gcal,p)$ and $(q\ppr,\Gcal\ppr,p\ppr)$ be spans to $H$ from $K$. A {\it contraction}
\[ (h,\pi,k)\co (q,\Gcal,p)\thra (q\ppr,\Gcal\ppr,p\ppr) \]
is a triplet of
\begin{itemize}
\item[-] surjective group homomorphism $\pi\co\Gcal\to\Gcal\ppr$,
\item[-] elements $h\in H$ and $k\in K$,
\end{itemize}
which makes the following diagram commutative.
\begin{equation}\label{*}
\xy
(-12,12)*+{\Gcal}="0";
(17,12)*+{K}="2";
(-6,2)*+{}="3";
(-2,2)*+{\Gcal\ppr}="4";
(17,2)*+{K}="6";
(-12,-14)*+{H}="8";
(-2,-14)*+{H}="10";
{\ar^{p} "0";"2"};
{\ar@{->>}_{\pi} "0";"4"};
{\ar_{p\ppr} "4";"6"};
{\ar^{\sig_k} "2";"6"};
{\ar_{q} "0";"8"};
{\ar^{q\ppr} "4";"10"};
{\ar_{\sig_h} "8";"10"};
{\ar@{}|\circlearrowright "2";"3"};
{\ar@{}|\circlearrowright "4";"8"};
\endxy
\end{equation}
\end{enumerate}
\end{dfn}


\begin{lem}\label{LemContraction}
Let $P\in\Ob(\ResCR)$ be any object, let $H,K$ be finite groups, and let
\[ (h,\pi,k)\co (q,\Gcal,p)\thra(q\ppr,\Gcal\ppr,p\ppr) \] 
be a contraction.
Then for any $\kp\in P(K)$, the equality
\[ [\Gcal\ov{q}{\to}H,P(p)(\kp)]=[\Gcal\ppr\ov{q\ppr}{\to}H,P(p\ppr)(\kp)] \]
holds in $\Pm(H)$.
\end{lem}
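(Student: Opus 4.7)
The plan is to produce a single element of $N_P(H)$ that witnesses the desired equality, by reading off the generating relation of $N_P(H)$ directly from the contraction data, with the conjugation $\sig_k$ appearing on the upper face of (\ref{*}) absorbed using the fact that $P$ is a restriction functor.

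First I would invoke the lower square of (\ref{*}), namely $\sig_h\ci q = q'\ci\pi$ with $\pi$ surjective. This matches exactly the shape of the commutative square in Definition \ref{DefEplusEachGX}(2), taking $f=q'$, $f'=q$, $G=H$ and $g=h$. It therefore yields, for every $\kp_0\in P(\Gcal')$,
\[ (q',\kp_0) - (q, P(\pi)(\kp_0)) \in N_P(H). \]
I would then specialize to $\kp_0 = P(p')(\kp)$.

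The remaining step is to identify $P(\pi)(P(p')(\kp))$ with $P(p)(\kp)$. By contravariance of $P$, this equals $P(p'\ci\pi)(\kp)$, and the upper square of (\ref{*}) rewrites $p'\ci\pi = \sig_k\ci p$, so the expression further becomes $P(p)(P(\sig_k)(\kp))$. Here the hypothesis that $P\in\Ob(\ResCR)$ enters: by Remark \ref{RemEquivAdd}, $P(\sig_k)=\id$, so this simplifies to $P(p)(\kp)$. Substituting back yields
\[ (q',P(p')(\kp)) - (q,P(p)(\kp)) \in N_P(H), \]
which is exactly the desired equality in $\Pm(H)$. I do not anticipate any serious obstacle; the only conceptual point is that the conjugation $\sig_k$ appearing in the upper face of the contraction is automatically invisible to $P$, and everything else is a routine unwinding of the definitions.
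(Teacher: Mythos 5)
Your proof is correct and uses exactly the same three ingredients as the paper's: the generating relation of $N_P(H)$ read off from the lower square of $(\ref{*})$, contravariant functoriality together with the upper square, and the fact that $P(\sig_k)=\id$ since $P$ is a restriction functor. The only difference is cosmetic — the paper starts from $[q,P(p)(\kp)]$ and rewrites it in three steps, whereas you first exhibit the element of $N_P(H)$ and then simplify $P(\pi)P(p\ppr)(\kp)$ — so this is essentially the paper's argument.
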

\begin{proof}
By the commutativity of $(\ref{*})$, This follows from
\begin{eqnarray*}
[q,P(p)(\kp)]&=&[q,P(p)P(\sig_k)(\kp)]\\
&=&[q,P(\pi)(P(p\ppr)(\kp))]\ =\ [q\ppr,P(p\ppr)(\kp)].
\end{eqnarray*}
\end{proof}


\begin{dfn}\label{DefGandD}
Let $G,H,K$ be finite groups. Let $\HUG$ be an $H$-$G$-biset, and let $f\co K\to G$ be a group homomorphism.
\begin{enumerate}
\item For any $u\in U$, define its {\it stabilizing span} $(q_u,\Gcal_u(K),p_u)$ of $f$ at $u$ by
\[ \Gcal_u(K)=\{ (h,k)\in H\times K\mid hu=uf(k)\}, \]
\[ q_u((h,k))=h,\ \  p_u((h,k))=k\quad (\fa (h,k)\in\Gcal_u(K)). \]
We depict this as follows.
\[
\xy
(-9,6)*+{\Gcal_u(K)}="0";
(9,6)*+{K}="2";
(-9,-6)*+{H}="4";
(9,-6)*+{G}="6";
{\ar^(0.56){p_u} "0";"2"};
{\ar_{q_u} "0";"4"};
{\ar^{f} "2";"6"};
{\ar@{<..}_{U} "4";"6"};
{\ar@{}|\msp "0";"6"};
\endxy
\]
\item Denote the double coset $H\bs U/K=H\bs U/f(K)$ by $D_U(f)$.
\end{enumerate}
\end{dfn}


\begin{rem}\label{RemGandD}
Let $G,H,K,U,f$ be as in Definition \ref{DefGandD}. If $u,u\ppr\in U$ satisfies $HuK=Hu\ppr K$ in $D_U(f)$, then there is a contraction
\[ (q_u,\Gcal_u(K),p_u)\thra(q_{u\ppr},\Gcal_{u\ppr}(K),p_{u\ppr}). \]
\end{rem}
\begin{proof}
By $HuK=Hu\ppr K$, there are $h_0\in H$ and $k_0\in K$ satisfying $u\ppr=h_0uf(k_0)$. Since the conjugation homomorphism
\[ \sig_{(h_0,k_0)}\co\Gcal_u(K)\ov{\cong}{\lra}\Gcal_{u\ppr}(K)\ ;\ (h,k)\mapsto (h_0hh_0\iv,k_0kk_0\iv) \]
gives an isomorphism compatible with $\sig_{h_0}$ and $\sig_{k_0}$, we have a contraction
\[ (h_0,\sig_{(h_0,k_0)},k_0)\co (q_u,\Gcal_u(K),p_u)\thra (q_{u\ppr},\Gcal_{u\ppr}(K),p_{u\ppr}). \]
\end{proof}


\begin{prop}\label{PropGandD}
Let $G,H,K,U,f$ be as in Definition \ref{DefGandD}. If we take a set of representatives $u_1,\ldots,u_s\in U$ for $D_U(f)=H\bs U/K$, and take stabilizing spans of $f$ at $u_i$
\[ (q_{u_i},\Gcal_{u_i}(K),p_{u_i})\quad(1\le i\le s), \]
then there is an isomorphism of $H$-$K$-bisets
\[ U\un{G}{\times}\tbf(f)\cong \coprod_{1\le i\le s}\tbf(q_{u_i})\un{\Gcal_{u_i}(K)}{\times}\rbf(p_{u_i}). \]
\end{prop}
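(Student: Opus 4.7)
The plan is to trace both sides through explicit descriptions as $H$-$K$-bisets and match them orbit-by-orbit. First I would unwind the left-hand side: since $\tbf(f)$ is the $G$-$K$-biset given by $G$ with standard left $G$-multiplication and right $K$-action via $f$, the canonical map $U\un{G}{\times}\tbf(f)\to U$, $(u,g)\mapsto ug$, is an isomorphism of $H$-$K$-bisets, where $U$ carries its given left $H$-action together with the right $K$-action $u\cdot k=uf(k)$. The $H$-$K$-orbits of $U$ are then precisely the double cosets $Hu_if(K)=Hu_iK$, so by the choice of representatives $u_1,\ldots,u_s$ we obtain a decomposition $U=\coprod_{1\le i\le s}Hu_if(K)$ of $H$-$K$-bisets. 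This reduces the claim to producing, for each $i$, an isomorphism
\[ \Phi_i\co\tbf(q_{u_i})\un{\Gcal_{u_i}(K)}{\times}\rbf(p_{u_i})\ov{\cong}{\lra}Hu_if(K) \]
of $H$-$K$-bisets.

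For each fixed $i$, unwinding the definitions of $\tbf$ and $\rbf$, the domain is the quotient of $H\times K$ by the equivalence relation $(xh',y)\sim(x,k'y)$ for every $(h',k')\in\Gcal_{u_i}(K)$, i.e., for every pair in $H\times K$ satisfying $h'u_i=u_if(k')$. I would take $\Phi_i([x,y])=xu_if(y)$. Well-definedness is immediate from the stabilizer identity: $xh'\cdot u_if(y)=x\cdot u_if(k')f(y)=x\cdot u_if(k'y)$. Left $H$-equivariance and right $K$-equivariance follow from the formula, and surjectivity onto $Hu_if(K)$ is obvious. For injectivity, $[x,y]=[x',y']$ holds iff there exists $(h',k')\in\Gcal_{u_i}(K)$ with $x'=xh'$ and $y=k'y'$; given $xu_if(y)=x'u_if(y')$, the candidates $h':=x\iv x'$ and $k':=yy'\iv$ satisfy $h'u_i=u_if(k')$ by direct rearrangement of the hypothesis and visibly realize the equivalence.

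I do not anticipate any serious obstacle: this is the biset analog of the familiar Mackey decomposition, and Remark \ref{RemGandD} together with Lemma \ref{LemContraction} already ensures that the term associated to a representative is independent of the choice of representative up to isomorphism. The only delicate point is the bookkeeping of left versus right actions in the coequalizer defining $\tbf(q_{u_i})\un{\Gcal_{u_i}(K)}{\times}\rbf(p_{u_i})$, which is forced by the conventions fixed in Definition \ref{Deftr}.
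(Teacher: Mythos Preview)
Your proof is correct and follows essentially the same route as the paper: identify $U\un{G}{\times}\tbf(f)$ with $\HUK$, decompose into double cosets $Hu_iK$, and then exhibit the explicit isomorphism $[x,y]\mapsto xu_if(y)$ from $\tbf(q_{u_i})\un{\Gcal_{u_i}(K)}{\times}\rbf(p_{u_i})$ to $Hu_iK$, checking well-definedness and bijectivity via the stabilizer identity. The paper's argument differs only in presentation, writing out the chain of equivalences on $H\times K$ to show directly that $(h,k)\sim(h',k')$ iff $huf(k)=h'uf(k')$, which packages your well-definedness and injectivity checks into one step.
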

\begin{proof}
Remark that $U\un{G}{\times}\tbf(f)$ is nothing but the biset $\HUK$, where $K$ acts on $U$ through $f$ as
\[ u\cdot k=uf(k)\quad(\fa u\in U,\fa k\in K). \]
Thus with $u_1,\ldots,u_s$, we have a decomposition
\[ U\un{G}{\times}\tbf(f)=\HUK=\coprod_{1\le i\le s}Hu_iK \]
as an $H$-$K$-biset.


Hence it remains to show that for any $u\in U$, there is an isomorphism of $H$-$K$-bisets
\[ HuK\cong\tbf(q_u)\un{\Gcal_u(K)}{\times}\rbf(p_u). \]
By definition, $\tbf(q_u)\un{\Gcal_u(K)}{\times}\rbf(p_u)=H\un{\Gcal_u(K)}{\times}K$ is a quotient of $H\times K$ by an equivalence relation, which can be rephrased as follows. 
\begin{itemize}
\item[$\ $] \hspace{-0.8cm}$(h,k),(h\ppr,k\ppr)\in H\times K$ are equivalent.
\item[$\LR$] there is $(h_0,k_0)\in\Gcal_u(K)$ satisfying $hh_0=h\ppr$ and $k=k_0k\ppr$.
\item[$\LR$] there is $(h_0,k_0)\in H\times K$ satisfying $hh_0=h\ppr,\ k=k_0k\ppr$ and $h_0u=uf(k_0)$.
\item[$\LR$] $(h,k),(h\ppr,k\ppr)$ satisfy $h\iv h\ppr u=uf(kk^{\prime-1})$.
\item[$\LR$] $(h,k),(h\ppr,k\ppr)$ satisfy $huf(k)=h\ppr uf(k\ppr)$.
\end{itemize}
Thus there is an isomorphism of $H$-$K$-bisets
\[ \tbf(q_u)\un{\Gcal_u(K)}{\times}\rbf(p_u)\to HuK\ ;\ [h,k]\mapsto huf(k). \]
\end{proof}


\begin{prop}\label{PropGandDUV}
Let $G,H,K,L$ be finite groups, let $\HUG$ and $\LVH$ be bisets, and let $f\co K\to G$ be a group homomorphism. Then the following holds.
\begin{enumerate}
\item Let $u\in U$ and $v\in V$ be any pair of elements, and put $w=[v,u]\in\VU$. Take stabilizing spans
\begin{itemize}
\item[-] $(q_u,\Gcal_u(K),p_u)$ of $K\ov{f}{\to}G$ at $u$,
\item[-] $(q_v,\Gcal_v(\Gcal_u(K)),p_v)$ of $\Gcal_u(K)\ov{q_u}{\to}H$ at $v$,
\item[-] $(q_w,\Gcal_w(K),p_w)$ of $K\ov{f}{\to}G$ at $w$.
\end{itemize}
Then there exists a contraction
\[ (q_v,\Gcal_v(\Gcal_u(K)),p_u\ci p_v)\thra (q_w,\Gcal_w(K),p_w). \]
This can be depicted as follows.
\[
\xy
(-40,20)*+{\Gcal_w(K)}="-2";
(-10,20)*+{}="1";
(-36,-2)*+{}="-1";
(-26,8)*+{\Gcal_v(\Gcal_u(K))}="0";
(0,8)*+{\Gcal_u(K)}="2";
(24,8)*+{K}="4";
(-26,-8)*+{L}="10";
(0,-8)*+{H}="12";
(24,-8)*+{G}="14";
{\ar_{p_v} "0";"2"};
{\ar_{p_u} "2";"4"};
{\ar^{q_v} "0";"10"};
{\ar^{q_u} "2";"12"};
{\ar^{f} "4";"14"};
{\ar@{<..}_{V} "10";"12"};
{\ar@{<..}_{U} "12";"14"};
{\ar@/^0.80pc/^{p_w} "-2";"4"};
{\ar@{->>}^{} "0";"-2"};
{\ar@/_0.80pc/_{q_w} "-2";"10"};
{\ar@{}|\circlearrowright "0";"1"};
{\ar@{}|\circlearrowright "0";"-1"};
{\ar@{}|\msp "0";"12"};
{\ar@{}|\msp "2";"14"};
\endxy
\]

\item Let $u_1,\ldots,u_s\in U$ be a set of representatives for $D_U(f)$, and let $v_{i1},\ldots,v_{it_i}\in V$ be a set of representative for $D_V(q_{u_i})$, for each $1\le i\le s$. If we put $w_{ij}=[v_{ij},u_i]$, then
\[ \{ w_{ij}\in V\un{H}{\times}U\mid 1\le i\le s,\ 1\le j\le t_i \} \]
gives a set of representatives for $D_{\VU}(f)$.
\end{enumerate}
\end{prop}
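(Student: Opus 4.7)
The plan for part (1) is to define the contraction as $(e_L,\pi,e_K)$, where $\pi\co\Gcal_v(\Gcal_u(K))\to\Gcal_w(K)$ is the projection $(\ell,(h,k))\mapsto(\ell,k)$. With trivial $e_L$ and $e_K$, the two squares of the contraction diagram collapse to strict equalities, which follow directly from the formulas for $q_v,q_w,p_w$ and $p_u\ci p_v$. The real work is checking that $\pi$ indeed lands in $\Gcal_w(K)$ and is surjective. For containment: given $\ell v=vh$ and $hu=uf(k)$, I would compute $\ell w=[\ell v,u]=[vh,u]=[v,hu]=[v,uf(k)]=wf(k)$. For surjectivity: given $(\ell,k)\in\Gcal_w(K)$, the relation $\ell w=wf(k)$ reads $[\ell v,u]=[v,uf(k)]$ in $\VU$, and the defining equivalence of the biset composition furnishes an $h\in H$ with $\ell v=vh$ and $hu=uf(k)$, giving the preimage $(\ell,(h,k))$.

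For part (2), I would verify separately that the assignment $(i,j)\mapsto Lw_{ij}K$ is surjective and injective onto $D_{\VU}(f)$. For \emph{surjectivity}, given $[v,u]\in\VU$ I first write $u=hu_if(k)$ for appropriate $h\in H$, $k\in K$ and $i\in\{1,\ldots,s\}$, so that $[v,u]=[vh,u_i]f(k)$. Next, using the representatives of $D_V(q_{u_i})=L\bs V/q_{u_i}(\Gcal_{u_i}(K))$, I write $vh=\ell v_{ij}\cdot q_{u_i}(h\ppr,k\ppr)=\ell v_{ij}h\ppr$ with $\ell\in L$ and $(h\ppr,k\ppr)\in\Gcal_{u_i}(K)$; then the short calculation $[vh,u_i]=\ell[v_{ij},h\ppr u_i]=\ell[v_{ij},u_if(k\ppr)]=\ell w_{ij}f(k\ppr)$ places $[v,u]$ in $Lw_{ij}K$. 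For \emph{injectivity}, I suppose $\ell w_{ij}=w_{i\ppr j\ppr}f(k)$; unpacking this as $[\ell v_{ij},u_i]=[v_{i\ppr j\ppr},u_{i\ppr}f(k)]$ produces an $h\in H$ with $\ell v_{ij}=v_{i\ppr j\ppr}h$ and $hu_i=u_{i\ppr}f(k)$. The second equation places $u_i$ and $u_{i\ppr}$ in the same $H$-$K$-double coset of $U$, forcing $i=i\ppr$; then $(h,k)\in\Gcal_{u_i}(K)$, so $h\in q_{u_i}(\Gcal_{u_i}(K))$, and the first equation forces $j=j\ppr$ by the choice of the $v_{ij}$'s.

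The only genuine difficulty I anticipate is tracking the direction of the $H$-action in the equivalence relation on $V\ti U$: since $(v,u)\sim(v\ppr,u\ppr)$ means $v=v\ppr h$ and $u\ppr=hu$, it is easy to commit side errors on $h$ versus $h\iv$ in every step where one traverses the biset composition. Otherwise the argument is just a careful translation between the stabilizing-span data of Definition \ref{DefGandD} and the quotient structure of $\VU$, with no further subtleties beyond those already encoded in Proposition \ref{PropGandD} and Remark \ref{RemGandD}.
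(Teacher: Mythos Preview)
Your proposal is correct and follows essentially the same approach as the paper: the contraction in part (1) is exactly $(e,\pi,e)$ with $\pi(\ell,h,k)=(\ell,k)$, and part (2) is argued by the same surjectivity/injectivity breakdown using the same intermediate equalities. If anything, you are slightly more explicit than the paper in verifying the surjectivity of $\pi$ and in noting that $(h,k)\in\Gcal_{u_i}(K)$ during the injectivity step.
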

\begin{proof}
\noindent {\rm (1)} We have
\begin{eqnarray*}
\Gcal_v(\Gcal_u(K))&=&\{ (\ell,(h,k))\in L\times\Gcal_u(K)\mid \ell v=vh \}\\
&=&\{ (\ell,h,k)\in L\times H\times K\mid \ell v=vh,\ hu=uf(k) \},
\end{eqnarray*}
and
\begin{eqnarray*}
\Gcal_{w}(K)&=&\{ (\ell,k)\in L\times K\mid \ell [v,u]=[v,u]f(k) \}\\
&=&\{ (\ell,k)\in L\times K\mid \ell v=vh,\ hu=uf(k)\ \ \text{for some}\ h\in H \}.
\end{eqnarray*}
If we define
\[ \pi\co\Gcal_v(\Gcal_u(K))\to\Gcal_w(K) \]
by $\pi((\ell,h,k))=(\ell,k)$, this gives a contraction
\[ (e,\pi,e)\co (q_v,\Gcal_v(\Gcal_u(K)),p_u\ci p_v)\thra (q_w,\Gcal_w(K),p_w). \]
\bigskip

\noindent {\rm (2)}
First we show $\VU=\un{i,j}{\bigcup}Lw_{ij}f(K)$.
Take any $[v,u]\in\VU$. Since $U=\un{1\le i\le s}{\coprod}Hu_if(K)$, there exists $1\le i\le s$ and $(h,k)\in H\times K$ satisfying
\[ u=hu_if(k). \]
Then for $vh\in V$, since $V=\un{1\le j\le t_i}{\coprod}Lv_{ij}q_{u_i}(\Gcal_{u_i}(K))$, there exist $1\le j\le t_i$ and $(\ell,h\ppr,k\ppr)\in L\times H\times K$ satisfying
\[ vh=\ell v_{ij}h\ppr,\ \ h\ppr u_i=u_if(k\ppr). \]
Thus we have
\begin{eqnarray*}
[v,u]&=&[v,hu_if(k)]\ =\ [\ell v_{ij}h\ppr,u_if(k)]\\
&=&[\ell v_{ij},u_if(k\ppr)f(k)]\ =\ \ell [v_{ij},u_i]f(k\ppr k).
\end{eqnarray*}
This means $\VU=\un{i,j}{\bigcup}L[v_{ij},u_i]f(K)$.

It remains to show that this union is disjoint. Suppose
\[ w_{i\ppr j\ppr}\in Lw_{ij}f(K) \]
holds for some $i,j$ and $i\ppr,j\ppr$.
Then there exists $\ell\in L,k\in K$ and $h\in H$ satisfying
\[ (v_{i\ppr j\ppr},u_{i\ppr})=(\ell v_{ij}h,h\iv u_if(k)). \]
Thus $u_{i\ppr}=h\iv u_if(k)$ implies $i\ppr=i$, and then $v_{ij\ppr}=\ell v_{ij}h$ implies $j\ppr=j$.
\end{proof}

\begin{dfn}\label{DefEplusEachUandf}
Let $P$ be an object in $\ResCR$, let $\HUG$ be any $H$-$G$-biset. For any element $[\kfgk]\in \Pm(G)$, define $\Pm(U)([\kfgk])\in \Pm(H)$ as follows.
\begin{itemize}
\item Choose a set of representatives $u_1,\ldots,u_s\in U$ for $D_U(f)$, and take stabilizing span $(q_{u_i},\Gcal_{u_i}(K),p_{u_i})$ for each $1\le i\le s$. Then, define as
\begin{equation}\label{Eq_Eplus}
\Pm(U)([\kfgk])=\sum_{1\le i\le s}[\Gcal_{u_i}(K)\ov{q_{u_i}}{\to}H,P(p_{u_i})(\kp)].
\end{equation}
\end{itemize}
\end{dfn}


\begin{rem}\label{RemEplusEachUandf}
Let $P,U$ and $[\kfgk]\in\Pm(G)$ be as above.
\begin{enumerate}
\item For each $(\kfgk)\in S_P(G)$, the right hand side of $(\ref{Eq_Eplus})$ does not depend on the choice of the set of representatives $u_1,\ldots,u_s\in U$.
\item $(\ref{Eq_Eplus})$ gives a well-defined module homomorphism $\Pm(U)\co \Pm(G)\to \Pm(H)$.
\item If $U\cong U\ppr$ holds for $H$-$G$-bisets $U$ and $U\ppr$, then $\Pm(U)([\fk])=\Pm([\fk])$ holds.
\item For arbitrary $H$-$G$-bisets $U$ and $U\ppr$,
\[ \Pm(U\am U\ppr)([\fk])=\Pm(U)([\fk])+\Pm(U\ppr)([\fk]) \]
holds. Namely, we have $\Pm(U\am U\ppr)=\Pm(U)+\Pm(U\ppr)$.
\end{enumerate}
\end{rem}
\begin{proof}
{\rm (1)} Let $u_1,\ldots,u_s\in U$ and $u_1\ppr,\ldots,u_s\ppr\in U$ be two choices of sets of representatives for $D_U(f)$. Renumbering $u_1\ppr,\ldots,u_s\ppr$ if necessary, we may assume that $Hu_iK=Hu_i\ppr K$ holds for each $1\le i\le s$. Then there is a contraction
\[ (h_i,\sig_{(h_i,k_i)},k_i)\co (q_{u_i},\Gcal_{u_i}(K),p_{u_i})\thra (q_{u_i\ppr},\Gcal_{u_i\ppr}(K),p_{u_i\ppr}) \]
for any $1\le i\le s$ by Remark \ref{RemGandD}, and thus we have
\[ [\Gcal_{u_i}(K)\ov{q_{u_i}}{\to}H,P(p_{u_i})(\kp)]=[\Gcal_{u_i\ppr}(K)\ov{q_{u_i\ppr}}{\to}H,P(p_{u_i\ppr})(\kp)] \]
by Lemma \ref{LemContraction}.


By a similar argument, {\rm (2), (3)} will follow from the following claim, which can be confirmed easily.
\begin{claim}\label{ClaimEplusEachUandf}
Let $\nu\co U\ov{\cong}{\lra}U\ppr$ be an isomorphism of $H$-$G$-bisets,  and let
\[
\xy
(-7,6)*+{K\ppr}="0";
(7,6)*+{K}="2";
(-7,-6)*+{G}="4";
(7,-6)*+{G}="6";
{\ar^{\pi} "0";"2"};
{\ar_{f\ppr} "0";"4"};
{\ar^{f} "2";"6"};
{\ar_{\sig_g} "4";"6"};
{\ar@{}|\circlearrowright "0";"6"};
\endxy\quad(g\in G)
\]
be any commutative diagram in $\FG$, with $\pi$ surjective. Then the following holds.
\begin{itemize}
\item[{\rm (i)}] If $u_1,\ldots,u_s\in U$ is a set of representatives for $D_U(f)$, then $\nu(u_1)g,\ldots,\nu(u_s)g\in U\ppr$ gives a set of representatives for $D_{U\ppr}(f\ppr)$.
\item[{\rm (ii)}] For any $u\in U$, take stabilizing spans
\begin{itemize}
\item[-] $(q_u,\Gcal_u(K),p_u)$ at $u\in U$ of $K\ov{f}{\to}G$,
\item[-] $(q_{\nu(u)g},\Gcal_{\nu(u)g}(K\ppr),p_{\nu(u)g})$ at $\nu(u)g\in U\ppr$ of $K\ppr\ov{f\ppr}{\to}G$.
\end{itemize}
Then, the group homomorphism
\[ \pi^{\dag}\co\Gcal_{\nu(u)g}(K\ppr)\to\Gcal_u(K)\ ;\ (h,k\ppr)\mapsto (h,\pi(k\ppr)) \]
gives a contraction
\[ (e,\pi^{\dag},e)\co (q_{\nu(u)g},\Gcal_{\nu(u)g}(K\ppr),\pi\ci p_{\nu(u)g})\thra (q_u,\Gcal_u(K),p_u). \]
\end{itemize}
\end{claim}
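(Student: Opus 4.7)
The plan is a direct verification of both parts, using the key structural input that the given commutative square says $\sig_g\ci f\ppr = f\ci\pi$, which on $K\ppr$ reads $g\, f\ppr(k\ppr)\, g\iv = f(\pi(k\ppr))$. In particular, $g\, f\ppr(K\ppr)\, g\iv = f(K)$, so right multiplication by $g$ converts the $f(K)$-action into the $f\ppr(K\ppr)$-action after transporting along $\nu$.

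For (i), I would introduce the bijection $\phi\co U\to U\ppr,\ u\mapsto\nu(u)g$. Using that $\nu$ is an $H$-$G$-biset isomorphism together with the identity above,
\[ \phi(H u f(K))=H\nu(u)f(K)g=H\nu(u)g\cdot g\iv f(K) g=H\nu(u)g\, f\ppr(K\ppr), \]
so $\phi$ descends to a bijection $D_U(f)\ov{\cong}{\lra}D_{U\ppr}(f\ppr)$, which sends the representatives $u_1,\ldots,u_s$ to $\nu(u_1)g,\ldots,\nu(u_s)g$.

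For (ii), I would make three elementary checks. \textbf{Well-definedness of $\pi^{\dag}$}: for $(h,k\ppr)\in\Gcal_{\nu(u)g}(K\ppr)$ the defining relation $h\nu(u)g=\nu(u)g\, f\ppr(k\ppr)$, pushed through $\nu\iv$ and using $g f\ppr(k\ppr)g\iv=f(\pi(k\ppr))$, becomes $hu=u f(\pi(k\ppr))$, so $(h,\pi(k\ppr))\in\Gcal_u(K)$. \textbf{Surjectivity of $\pi^{\dag}$}: given $(h,k)\in\Gcal_u(K)$, pick any $k\ppr\in\pi\iv(k)$ (possible since $\pi$ is surjective); reversing the above calculation shows $(h,k\ppr)\in\Gcal_{\nu(u)g}(K\ppr)$ with $\pi^{\dag}(h,k\ppr)=(h,k)$. \textbf{Commutativity of the contraction square}: on an element $(h,k\ppr)$, $p_u\ci\pi^{\dag}$ yields $\pi(k\ppr)=\pi\ci p_{\nu(u)g}(h,k\ppr)$ and $q_u\ci\pi^{\dag}$ yields $h=q_{\nu(u)g}(h,k\ppr)$, which match $\sig_e=\id$ on both $K$ and $H$. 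Together these say $(e,\pi^{\dag},e)$ is a contraction of the required form.

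There is no real obstacle here; the content is purely bookkeeping around the single identity $g f\ppr(-) g\iv=f(\pi(-))$ extracted from the hypothesis. The only care needed is to keep the left/right actions straight when transporting through $\nu$ and composing with right multiplication by $g$, which is exactly what makes the $G$-action twist $\sig_g$ disappear after the transport.
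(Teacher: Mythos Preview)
Your proposal is correct and is precisely the direct verification the paper omits: the paper states the claim and says it ``can be confirmed easily'' without giving any details, so your elementwise check of the double-coset bijection and the three conditions for $(e,\pi^{\dag},e)$ to be a contraction is exactly the intended argument.
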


\bigskip

In fact, by Lemma \ref{LemContraction} and Claim \ref{ClaimEplusEachUandf}, it follows
\begin{eqnarray*}
\Pm(U)([\kfgk])&=&\sum_{1\le i\le s}[\Gcal_{u_i}(K)\ov{q_{u_i}}{\to}H,P(p_{u_i})(\kp)]\\
&=&\sum_{1\le i\le s}[\Gcal_{\nu(u_i)g}(K)\ov{q_{\nu(u_i)g}}{\to}H,P(p_{\nu(u_i)g})P(\pi)(\kp)]\\
&=&\Pm(U\ppr)([K\ppr\ov{f\ppr}{\to}G,P(\pi)(\kp)]).
\end{eqnarray*}

{\rm (4)} This is trivial, since for any sets of representatives $u_1,\ldots,u_s\in U$ for $D_U(f)$ and $u_1\ppr,\ldots,u_t\ppr\in U\ppr$ for $D_{U\ppr}(f)$, their union
\[ u_1,\ldots,u_s,u_1\ppr,\ldots,u_t\ppr \in U\am U\ppr\]
gives a set of representatives for $D_{U\am U\ppr}(f)$.
\end{proof}


\begin{rem}\label{RemSp}
Let $P\in\Ob(\ResCR)$ be any restriction functor, let $f\co G\to H$ be any group homomorphism, and let $\kp\in E(G)$ and $\eta\in E(H)$ be any element. The following holds.
\begin{enumerate}
\item $\Pm(\tbf(f))([G\ov{\id_G}{\to}G,\kp])=[G\ov{f}{\to}H,\kp]$.
\item $\Pm(\rbf(f))([H\ov{\id_H}{\to}H,\eta])=[G\ov{\id}{\to}G,P(f)(\eta)]$.
\end{enumerate}
Especially, we have $\Pm({}_G\Id_G)([G\ov{\id_G}{\to}G,\kp])=[G\ov{\id}{\to}G,\kp]$.
\end{rem}

\begin{lem}\label{LemEplusEachUVf}
Let $P$ be an object in $\ResCR$. Let $G,H,K,L$ be finite groups, let $\HUG$ and $\LVH$ be bisets, and let $K\ov{f}{\to}G$ be any homomorphism. Then for any $\kp\in P(K)$, we have
\[ \Pm(\VU)([\kfgk])=\Pm(V)\Pm(U)([\kfgk]). \]
\end{lem}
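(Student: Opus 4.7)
The plan is a direct compatibility check, using Proposition \ref{PropGandDUV} to line up the indexing sets on both sides and Lemma \ref{LemContraction} to match the summands term by term.

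First I would choose a set of representatives $u_1,\ldots,u_s\in U$ for $D_U(f)$ with stabilizing spans $(q_{u_i},\Gcal_{u_i}(K),p_{u_i})$, and for each $i$ a set of representatives $v_{i1},\ldots,v_{it_i}\in V$ for $D_V(q_{u_i})$ with stabilizing spans $(q_{v_{ij}},\Gcal_{v_{ij}}(\Gcal_{u_i}(K)),p_{v_{ij}})$. Unwinding the definition of $\Pm(V)\Pm(U)([\fk])$ through $(\ref{Eq_Eplus})$ then gives
\[
\Pm(V)\Pm(U)([\fk])=\sum_{i,j}[\Gcal_{v_{ij}}(\Gcal_{u_i}(K))\ov{q_{v_{ij}}}{\to}L,\,P(p_{u_i}\ci p_{v_{ij}})(\kp)].
\]

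Next, set $w_{ij}=[v_{ij},u_i]\in\VU$ and choose stabilizing spans $(q_{w_{ij}},\Gcal_{w_{ij}}(K),p_{w_{ij}})$ at each $w_{ij}$. Proposition \ref{PropGandDUV}(2) tells me that the $w_{ij}$ form a set of representatives for $D_{\VU}(f)$, so
\[
\Pm(\VU)([\fk])=\sum_{i,j}[\Gcal_{w_{ij}}(K)\ov{q_{w_{ij}}}{\to}L,\,P(p_{w_{ij}})(\kp)].
\]
The two expressions are now indexed by the same pairs $(i,j)$, and it remains to match the summands.

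For this, Proposition \ref{PropGandDUV}(1) supplies, for each $(i,j)$, a contraction
\[
(q_{v_{ij}},\Gcal_{v_{ij}}(\Gcal_{u_i}(K)),p_{u_i}\ci p_{v_{ij}})\thra(q_{w_{ij}},\Gcal_{w_{ij}}(K),p_{w_{ij}}),
\]
and then Lemma \ref{LemContraction} applied with input $\kp\in P(K)$ yields
\[
[\Gcal_{v_{ij}}(\Gcal_{u_i}(K))\ov{q_{v_{ij}}}{\to}L,\,P(p_{u_i}\ci p_{v_{ij}})(\kp)]=[\Gcal_{w_{ij}}(K)\ov{q_{w_{ij}}}{\to}L,\,P(p_{w_{ij}})(\kp)]
\]
in $\Pm(L)$. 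Summing over $(i,j)$ gives the desired equality.

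The only place where real content enters is in invoking Proposition \ref{PropGandDUV}, which has already been established; everything else is bookkeeping. The main subtlety to be careful about is ensuring that the choices of stabilizing spans made for $\Pm(V)\ci\Pm(U)$ and for $\Pm(\VU)$ are used consistently, but since Remark \ref{RemEplusEachUandf}(1) guarantees independence of the representatives, this presents no obstacle.
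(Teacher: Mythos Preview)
Your proof is correct and follows essentially the same route as the paper: choose representatives $u_i$ for $D_U(f)$ and $v_{ij}$ for $D_V(q_{u_i})$, set $w_{ij}=[v_{ij},u_i]$, invoke Proposition \ref{PropGandDUV}(2) to identify the $w_{ij}$ as representatives for $D_{\VU}(f)$, and use Proposition \ref{PropGandDUV}(1) together with Lemma \ref{LemContraction} to match the summands. The paper presents this as a single chain of equalities rather than computing the two sides separately, but the content is identical.
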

\begin{proof}
Let $u_1,\ldots,u_s\in U$ be a set of representatives for $D_U(f)$, and let $v_{i1},\ldots,v_{it_i}\in V$ be a set of representatives for $D_V(q_{u_i})$. If we put $w_{ij}=[v_{ij},u_i]$, then by Lemma \ref{LemContraction} and Proposition \ref{PropGandDUV}, we have
\begin{eqnarray*}
\Pm(\VU)([\kfgk])&=&\sum_{1\le i\le s}\sum_{1\le j\le t_i}[\Gcal_{w_{ij}}(K)\ov{q_{w_{ij}}}{\lra}L,P(p_{w_{ij}})(\kp)]\\
&=&\sum_{1\le i\le s}\sum_{1\le j\le t_i}[\Gcal_{v_{ij}}(\Gcal_{u_i}(K))\ov{q_{v_{ij}}}{\to}L,P(p_{v_{ij}})P(p_{u_i})(\kp)]\\
&=&\Pm(V)\big(\sum_{1\le i\le s}[\Gcal_{u_i}(K)\ov{q_{u_i}}{\to}H,P(p_{u_i})(\kp)]\big)\\
&=&\Pm(V)\Pm(U)([\kfgk]).
\end{eqnarray*}
\end{proof}


By Remark \ref{RemEplusEachUandf}, we can define as follows.
\begin{dfn}\label{DefEplusEachMorph}
Let $P$ be any object in $\ResCR$ and let $G,H$ be any pair of finite groups. Extending $(\ref{Eq_Eplus})$ by linearity, we obtain a well-defined module homomorphism
\[ \Pm\co\Bcal(G,H)\to\RMod(\Pm(G),\Pm(H)). \]
\end{dfn}

\begin{prop}\label{PropEplusFunct}
Let $P$ be any object in $\ResCR$. With the correspondences defined in Definition \ref{DefEplusEachGX} and Definition \ref{DefEplusEachMorph},
\[ \Pm\co\Bcal\to\RMod \]
forms an additive functor. Namely, $\Pm$ becomes a biset functor.
\end{prop}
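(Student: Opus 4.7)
The plan is to verify that $\Pm$ satisfies the three defining properties of an additive (biset) functor: preservation of identities, preservation of composition, and additivity on hom-modules. Each is essentially a corollary of material already developed.

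First, I would check additivity of $\Pm\co\Bcal(G,H)\to\RMod(\Pm(G),\Pm(H))$. By Definition \ref{DefEplusEachMorph}, the formula $(\ref{Eq_Eplus})$ is extended by $R$-linearity once we know it is well-defined on isomorphism classes of bisets (Remark \ref{RemEplusEachUandf}(3)) and respects $U\am U\ppr\mapsto\Pm(U)+\Pm(U\ppr)$. The latter is exactly Remark \ref{RemEplusEachUandf}(4), and $\Pm(\emptyset)=0$ follows from the empty sum of representatives in $(\ref{Eq_Eplus})$. Hence $\Pm$ sends the Grothendieck completion at each hom to an $R$-module homomorphism.

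Next, preservation of composition. Since both sides of
\[ \Pm(V\un{H}{\times}U)\;=\;\Pm(V)\ci \Pm(U) \]
are $R$-linear in the pair $(V,U)$ by the just-verified additivity, it suffices to check the identity on generators $[\kfgk]$ of $\Pm(G)$ for bisets $\HUG$ and $\LVH$. This is precisely the content of Lemma \ref{LemEplusEachUVf}.

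Finally, the identity axiom: I need to show $\Pm({}_G\Id_G)=\id_{\Pm(G)}$. Evaluating the formula $(\ref{Eq_Eplus})$ on a generator $[\kfgk]$ with $U={}_GG_G$, the double-coset set $D_U(f)=G\bs G/f(K)$ is a singleton with representative $u_1=e$, and the stabilizing span is
\[ \Gcal_e(K)=\{(f(k),k)\mid k\in K\}\cong K, \]
under which $q_e$ corresponds to $f$ and $p_e$ to $\id_K$. Applying the defining relation in $N_P(G)$ along the surjective (in fact bijective) homomorphism $p_e\co\Gcal_e(K)\to K$ (with $g=e$) gives
\[ [\Gcal_e(K)\ov{q_e}{\to}G,P(p_e)(\kp)]=[\kfgk] \]
in $\Pm(G)$, so $\Pm({}_G\Id_G)([\kfgk])=[\kfgk]$ as required.

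There is no real obstacle left: all three axioms reduce to already-established facts (Remark \ref{RemEplusEachUandf}(3)(4), Lemma \ref{LemEplusEachUVf}, and a direct unwinding of stabilizing spans for the identity biset together with the defining relations in $N_P(G)$). The mildest delicacy is making sure, in the identity step, that the isomorphism $\Gcal_e(K)\cong K$ is realized as a contraction so that Lemma \ref{LemContraction} (equivalently the quotient by $N_P(G)$) applies; this is immediate because any isomorphism can be used as the surjection $\pi$ in Definition \ref{DefEplusEachGX}(2) with $g=e$.
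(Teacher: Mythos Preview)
Your proof is correct and follows essentially the same route as the paper, which simply cites Remark~\ref{RemEplusEachUandf}, Remark~\ref{RemSp}, and Lemma~\ref{LemEplusEachUVf}. The only minor difference is in the identity axiom: the paper implicitly combines Remark~\ref{RemSp}(1) (so that $[\kfgk]=\Pm(\tbf(f))([\id_K,\kp])$) with Lemma~\ref{LemEplusEachUVf} and the isomorphism ${}_G\Id_G\un{G}{\times}\tbf(f)\cong\tbf(f)$, whereas you compute $\Pm({}_G\Id_G)([\kfgk])$ directly from the stabilizing span at $e$---both arguments are equally straightforward.
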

\begin{proof}
This immediately follows from Remark \ref{RemEplusEachUandf}, \ref{RemSp} and Lemma \ref{LemEplusEachUVf}.
\end{proof}

\begin{lem}\label{LemForEmAdjEachObj}
Let $P\in\Ob(\ResCR)$ be any object. For each finite group $G$, define a homomorphism $\delta_G^{(P)}\co P(G)\to \Pm(G)$ by
\[ \delta_G^{(P)}(\kp)=[G\ov{\id_G}{\to}G,\kp]. \]
Then $\delta^{(P)}=\{ \delta^{(P)}_G \}_{G\in\Ob(\FG)}$ gives a morphism
\[ \delta^{(P)}\co P\to\rbf^{\sh}(\Pm) \]
in $\ResCR$.
\end{lem}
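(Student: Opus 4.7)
The plan is to verify the two things required for $\delta^{(P)}$ to be a morphism in $\ResCR=\FFA$: first that each component $\delta^{(P)}_G$ is an $R$-module homomorphism, and second that the family is natural with respect to morphisms $\und{f}\co G\to H$ in $\sFG$.

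First I would check $R$-linearity of each $\delta_G^{(P)}$. This is immediate from the construction of Definition \ref{DefEplusEachGX}: the assignment $\kp\mapsto (G\ov{\id_G}{\to}G,\kp)$ identifies $P(G)$ with the summand of $S_P(G)=\bigoplus_{K\to G}P(K)$ indexed by $\id_G$, hence is an $R$-module homomorphism, and $\delta_G^{(P)}$ is its composition with the quotient map $S_P(G)\twoheadrightarrow\Pm(G)$.

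Then for naturality, I would fix a morphism $\und{f}\co G\to H$ in $\sFG$, pick any representative $f\in\FG(G,H)$, and verify the commutativity of the square relating $P(f)$ with $\rbf^{\sh}(\Pm)(\und{f})=\Pm(\rbf(f))$. By Remark \ref{Remtr}, $\Pm(\rbf(f))$ does not depend on the choice of representative of $\und{f}$, so this is well-posed. For any $\eta\in P(H)$, the computation reduces to
\[ \Pm(\rbf(f))\bigl(\delta_H^{(P)}(\eta)\bigr)=\Pm(\rbf(f))\bigl([H\ov{\id_H}{\to}H,\eta]\bigr)=[G\ov{\id_G}{\to}G,P(f)(\eta)]=\delta_G^{(P)}\bigl(P(f)(\eta)\bigr), \]
where the middle equality is precisely Remark \ref{RemSp} {\rm (2)}.

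I do not expect any substantive obstacle here: the essential computation for the naturality square is already packaged in Remark \ref{RemSp} {\rm (2)}, and the $R$-linearity check is tautological by the definition of $\Pm$ as a quotient of a direct sum. The only point to keep in mind is that we work over $\sFG$ rather than $\FG$; but since $P$ is by hypothesis a functor on $\sFG$ and $\rbf^{\sh}(\Pm)$ is so by Remark \ref{Remtr}, no separate compatibility with conjugations $\sig_h$ needs to be verified on top of the above.
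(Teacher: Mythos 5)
Your proof is correct and follows essentially the same route as the paper: the naturality square is exactly the computation packaged in Remark \ref{RemSp}(2). You add a couple of sanity checks ($R$-linearity of each component, well-posedness over $\sFG$) that the paper leaves implicit, and as a minor bonus you have the subscripts $G$ and $H$ in the right places, whereas the paper's displayed computation swaps them in two spots.
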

\begin{proof}
For any group homomorphism $f\co G\to H$ and any $\kp\in P(H)$, we have
\begin{eqnarray*}
\Pm(\rbf(f))\delta^{(P)}_G(\kp)&=&\Pm(\rbf(f))([H\ov{\id_H}{\to}H,\kp])\\
&=&([G\ov{\id_G}{\to}G,P(f)(\kp)])\ =\ \delta^{(P)}_HP(f)(\kp)
\end{eqnarray*}
by Remark \ref{RemSp}.

\end{proof}

\begin{prop}\label{PropEmAdjEachObj}
For any $P\in\Ob(\ResCR)$ and $B\in\Ob(\BisetFtr^R)$, there is a natural bijection
\[ \BisetFtr^R(\Pm,B)\ov{\cong}{\lra}\ResCR(P,\rbf^{\sh}B). \]
\end{prop}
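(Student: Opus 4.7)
The plan is to exhibit inverse bijections $\Psi\co\BisetFtr^R(\Pm,B)\to\ResCR(P,\rbf^{\sh}B)$ and $\Xi\co\ResCR(P,\rbf^{\sh}B)\to\BisetFtr^R(\Pm,B)$. In one direction I will set $\Psi(\Phi)=\rbf^{\sh}(\Phi)\ci\delta^{(P)}$, using the morphism $\delta^{(P)}\co P\to\rbf^{\sh}(\Pm)$ constructed in Lemma~\ref{LemForEmAdjEachObj}; concretely, $\Psi(\Phi)_G(\kp)=\Phi_G([G\ov{\id_G}{\to}G,\kp])$. In the opposite direction, given $\psi\in\ResCR(P,\rbf^{\sh}B)$, I will define $\Xi(\psi)_G\co \Pm(G)\to B(G)$ on generators by
\[ \Xi(\psi)_G([\kfgk])=B(\tbf(f))(\psi_K(\kp)) \]
and extend $R$-linearly.

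First I will check that $\Xi(\psi)_G$ vanishes on the generators of $N_P(G)$, so it descends to $\Pm(G)$. For a generator arising from a commutative square $\sig_g\ci f'=f\ci\pi$ with $\pi\co K'\thra K$ surjective, the crucial input is an isomorphism $\tbf(f)\cong\tbf(f')\un{K'}{\times}\rbf(\pi)$ of $G$-$K$-bisets. This can be written down explicitly: the map $x\mapsto [xg,e_K]$ has inverse $[v,u]\mapsto vg^{-1}f(u)$, and surjectivity of $\pi$ is precisely what is needed to lift an arbitrary $k\in K$ to some $k''\in K'$ when verifying $K$-equivariance of the former. Applying $B$ and invoking the naturality of $\psi$ in the form $\psi_{K'}\ci P(\pi)=B(\rbf(\pi))\ci\psi_K$ then yields $\Xi(\psi)_G([f,\kp])=\Xi(\psi)_G([f',P(\pi)(\kp)])$, as required.

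The main obstacle, and the step carrying most of the content, is to show $\Xi(\psi)=\{\Xi(\psi)_G\}_G$ is compatible with the biset action, namely $B(U)\ci\Xi(\psi)_G=\Xi(\psi)_H\ci\Pm(U)$ for every $H$-$G$-biset $U$. Evaluating on a generator $[\kfgk]$, unwinding the definition of $\Pm(U)$, and using naturality of $\psi$ at each stabilizing span $(q_{u_i},\Gcal_{u_i}(K),p_{u_i})$, the right-hand side reduces to $\sum_i B\big(\tbf(q_{u_i})\un{\Gcal_{u_i}(K)}{\times}\rbf(p_{u_i})\big)(\psi_K(\kp))$, while the left-hand side equals $B(U\un{G}{\times}\tbf(f))(\psi_K(\kp))$. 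The biset decomposition $U\un{G}{\times}\tbf(f)\cong\coprod_i\tbf(q_{u_i})\un{\Gcal_{u_i}(K)}{\times}\rbf(p_{u_i})$ supplied by Proposition~\ref{PropGandD}, combined with additivity of $B$, matches these two expressions.

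Mutual inversion is then straightforward. In one direction, $\Psi(\Xi(\psi))_G(\kp)=B(\tbf(\id_G))(\psi_G(\kp))=\psi_G(\kp)$ since $\tbf(\id_G)$ is the identity of $G$ in $\Bcal$. In the other, by naturality of $\Phi$ together with Remark~\ref{RemSp}(1),
\[ \Xi(\Psi(\Phi))_G([\kfgk])=B(\tbf(f))(\Phi_K([\id_K,\kp]))=\Phi_G(\Pm(\tbf(f))([\id_K,\kp]))=\Phi_G([\kfgk]). \]
Naturality of the resulting bijection in both $P$ and $B$ is immediate from the functorial character of $\Psi$ and $\Xi$.
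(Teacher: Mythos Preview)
Your argument is correct and follows essentially the same route as the paper: the paper defines the two maps exactly as you do (its $\Xi$ is your $\Psi$, its $\Lambda$ is your $\Xi$), proves compatibility with bisets by invoking Proposition~\ref{PropGandD} in the same way, and verifies mutual inversion via Remark~\ref{RemSp}. The one difference is that you supply an explicit well-definedness check of $\Xi(\psi)_G$ on $N_P(G)$ via the biset isomorphism $\tbf(f)\cong\tbf(f')\un{K'}{\times}\rbf(\pi)$, whereas the paper leaves this implicit; your extra step is correct and makes the argument more self-contained.
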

\begin{proof}
We construct maps
\[ \Xi\co\BisetFtr^R(\Pm,B)\to\ResCR(P,\rbf^{\sh}B) \]
and
\[ \Lam\co\ResCR(P,\rbf^{\sh}B)\to\BisetFtr^R(\Pm,B), \]
and show that they are inverse to each other.

\bigskip

\noindent {\bf {\rm (1)} Construction of $\Xi$.}

For any morphism $\lam\in\BisetFtr^R(\Pm,B)$, define $\Xi(\lam)\co P\to \rbf^{\sh}B$ by
\[ \Xi(\lam)=\rbf^{\sh}(\lam)\ci\delta. \]

\bigskip

\noindent {\bf {\rm (2)} Construction of $\Lam$.}

Let $\xi\in\ResCR(P,\rbf^{\sh}B)$ be any morphism. For any finite group $G$, define a homomorphism $\Lam(\xi)_G\co \Pm(G)\to B(G)$ by
\begin{eqnarray*}
\Lam(\xi)_G([\kfgk])=B(\tbf(f))\xi_K(\kp).
\end{eqnarray*}
For any $H$-$G$-biset $U$, if we take a set of representatives $u_1,\ldots,u_s\in U$ for $D_U(f)$, then we have
\begin{eqnarray*}
\Lam(\xi)_H\ci \Pm(U)([\kfgk])%
&=&\Lam(\xi)_H\big(\sum_{1\le i\le s} [\Gcal_{u_i}(K)\ov{q_{u_i}}{\to}H, P(p_{u_i})(\kp)] \big)\\
&=&\sum_{1\le i\le s} B(\tbf(q_{u_i}))\xi_{\Gcal_{u_i}(K)}P(p_{u_i})(\kp) \\
&=&\sum_{1\le i\le s} B(\tbf(q_{u_i}))B(\rbf(p_{u_i}))\xi_K(\kp) \\
&=&B\big(\coprod_{1\le i\le s}\tbf(q_{u_i})\un{\Gcal_{u_i}(K)}{\times}\rbf(p_{u_i})\big)\xi_K(\kp)\\
&=&B(U\un{G}{\times}\tbf(f))\xi_K(\kp)\\
&=&B(U)B(\tbf(f))\xi_K(\kp)\\
&=&B(U)\ci\Lam(\xi)_G([\kfgk])
\end{eqnarray*}
by Proposition \ref{PropGandD}. This implies the commutativity of
\[
\xy
(-11,6)*+{\Pm(G)}="0";
(9,6)*+{B(G)}="2";
(-11,-6)*+{\Pm(H)}="4";
(9,-6)*+{B(H)}="6";
{\ar^(0.52){\Lam(\xi)_G} "0";"2"};
{\ar_{\Pm(U)} "0";"4"};
{\ar^{B(U)} "2";"6"};
{\ar_(0.52){\Lam(\xi)_H} "4";"6"};
{\ar@{}|\circlearrowright "0";"6"};
\endxy
\]
which shows that $\Lam(\xi)\co \Pm\to B$ is a morphism of biset functors.

\bigskip

\noindent {\bf {\rm (3)} Confirmation of $\Xi\ci\Lam=\id$.}

For any $\xi\in\ResCR(P,\rbf^{\sh}B)$ and $G$, we have
\[ \Xi(\Lam(\xi))_G(\kp)=\Lam(\xi)_G([G\ov{\id_G}{\to}G,\kp])=B(\tbf(\id_G))\xi_G(\kp)=\xi_G(\kp)\quad(\fa\kp\in P(K)). \]
This means $\Xi(\Lam(\xi))=\xi$.

\bigskip

\noindent {\bf {\rm (4)} Confirmation of $\Lam\ci\Xi=\id$.}

For any $\lam$ and $G$, and for any $[\kfgk]\in \Pm(G)$, we have
\begin{eqnarray*}
\Lam(\Xi(\lam))_G([\kfgk])&=&B(\tbf(f))\ci\Xi(\lam)_K(\kp)\\
&=&B(\tbf(f))\ci\lam_K([K\ov{\id_K}{\to}K,\kp])\\
&=&\lam_G\ci \Pm(\tbf(f))([K\ov{\id_K}{\to}K,\kp])\ =\ \lam_G([\kfgk])
\end{eqnarray*}
by Remark \ref{RemSp}.
\end{proof}

\section*{Acknowledgement}
This article has been written when the author was staying at LAMFA, l'Universit\'{e} de Picardie-Jules Verne, by the support of JSPS Postdoctoral Fellowships for Research Abroad. He wishes to thank the hospitality of Professor Serge Bouc, Professor Radu Stancu and the members of LAMFA.


\begin{thebibliography}{10}                                                      

\bibitem{Boltje}Boltje, R.: \emph{Mackey functors and related structures in representation theory and number theory}, Habilitation-Thesis, Universit\"{a}t Augsburg (1995).

\bibitem {Borceux} Borceux, F.: \emph{Handbook of categorical algebra. 1. Basic category theory}, Encyclopedia of Mathematics and its Applications, \textbf{50}. Cambridge University Press, Cambridge, 1994. xvi+345 pp.


\bibitem {Bouc_Biset} Bouc, S.: \emph{Biset functors for finite groups}, Lecture Notes in Mathematics, 1990, Springer-Verlag, Berlin (2010).




\bibitem{Jacobson} Jacobson, E.T.: \emph{The Brauer ring of a field}. Illinois J. Math. \textbf{30} (1986), 479--510.


\bibitem{MacLane} Mac Lane, S.: \emph{Categories for the working mathematician}. Second edition. Graduate Texts in Mathematics, \textbf{5}. Springer-Verlag, New York, (1998). xii+314 pp.



\bibitem{N_BisetMackey} Nakaoka, H.: \emph{A Mackey-functor theoretic interpretation of biset functors}, arxiv.

\bibitem{N_BrRng} Nakaoka, H.: \emph{Structure of the Brauer ring of a field extension}. Illinois J. Math. \textbf{52} (2008), no. 1, 261--277.



\end{thebibliography}
\end{document}